\documentclass
[11pt, a4paper]{amsart}
\usepackage[utf8]{inputenc}
\usepackage{amssymb,amscd}
\usepackage{thmtools}
\usepackage{algorithm}
\usepackage{algpseudocode}

\usepackage[textsize=tiny]{todonotes}
\usepackage{hyperref}
\usepackage[shortlabels]{enumitem}
\usepackage{float} 
\usepackage{cleveref}   

\newcommand{\Kbar}{\overline{\mathbb{K}}}

\subjclass[2020]{Primary: 14N07.  
	Secondary: 15A69, 15A75. 
}
\keywords{Tensor decomposition, border rank, Kruskal’s theorem, identifiability, skew-symmetric tensors, contraction varieties}

\newtheoremstyle{alstandard}{7pt}{3pt}{\rm}{}{\scshape}{:}{0.5em}{}
\theoremstyle{alstandard}
\swapnumbers
\declaretheorem[name=Theorem]{theorem}
\numberwithin{theorem}{section}
\declaretheorem[sibling=theorem, name=Lemma]{lemma}
\declaretheorem[sibling=theorem, name=Proposition]{prop}

\declaretheorem[sibling=theorem, name=Definition]{defi}

\declaretheorem[sibling=theorem, name=Corollary]{cor}
\declaretheorem[sibling=theorem, name=Remark]{remark}

\newcommand{\Q}{\mathbb{Q}}
\newcommand{\R}{\mathbb{R}}
\newcommand{\C}{\mathbb{C}}
\newcommand{\N}{\mathbb{N}}

\renewcommand\P{\mathbb P}

\newcommand{\K}{\mathbb K}

\DeclareMathOperator{\im}{im}

\DeclareMathOperator{\rk}{rank}

\DeclareMathOperator{\codim}{codim}

\DeclareMathOperator{\grass}{Gr}

\DeclareMathOperator{\skrk}{skrank}

\usepackage{pifont}
\begin{document}
	\title[Border rank=rank for Kruskal tensors]{Border rank=rank for Kruskal tensors and a Kruskal's theorem for skew decompositions}

    \author[Blomenhofer]{Alexander Taveira Blomenhofer\textsuperscript{*}}
    \thanks{*\:\: University of Copenhagen}

	
    \author[Lovitz]{Benjamin Lovitz\textsuperscript{**}}
	\thanks{** Concordia University, Montréal}
	\begin{abstract} 
		We show that border rank is equal to rank for Kruskal tensors. We also give an analogous Kruskal condition for alternating tensors, which certifies uniqueness of skew rank decompositions. Furthermore, we show that border skew rank is equal to skew rank for alternating Kruskal tensors, and we give an algorithm to find the minimum skew rank decomposition of alternating Kruskal tensors.
	\end{abstract}
	
	\maketitle
	\raggedbottom
    
	\section{Introduction}
Rank, border rank and identifiability are three of the most difficult problems in tensor geometry. In particular, tensor rank is NP-hard to compute \cite{Hillar_Lim_2013}. 
These three problems share a similar characteristic that makes them challenging: 
Writing down one decomposition might be easy, but ruling out other decompositions can be very difficult. If a tensor $ T\in \mathbb K^{n_1\times n_2 \times n_3} $ (where $\mathbb K$ is any field, such as $\mathbb R$ or $\mathbb C$) has a known decomposition 
\begin{align}\label{eq:intro_decomp}
	T = \sum_{i = 1}^{r} a_i\otimes b_i\otimes c_i, \qquad (a_i\in \mathbb K^{n_1}, b_i\in \mathbb K^{n_2}, c_i\in \mathbb K^{n_3}),
\end{align}
then its rank is at most $ r $. However, to show that it has rank precisely $ r $, one needs to rule out  ``surprise decompositions'' of length \emph{less} than $ r $. 
To then show that $ T $ is identifiable, one needs to rule out other decompositions of length \emph{equal} to $ r $. Finally, to show that $ T $ also has border rank $ r $, over $ \K=\C $ one needs to rule out that $ T $ could be approximated by a curve of tensors $ T_{\varepsilon} \to T $ (as $ \varepsilon\to 0 $), where the $ T_{\varepsilon} $ have rank smaller than $ r $ for $ \varepsilon\neq 0 $. In other words, one needs to rule out surprise decompositions that only work in the \emph{limit}. 

Kruskal's theorem \cite{Kruskal_1977} is a classical result on tensor decomposition. It provides one of very few explicit, sufficient criteria to see that a given decomposition is the \emph{unique minimum rank decomposition}. 
In other words, it establishes both the rank and the identifiability of a given decomposition. 
Kruskal's theorem assumes a matroidal constraint on the decomposition, which is formulated via Kruskal ranks. These notions will be introduced in detail in \Cref{sec:prelims}. 
However, Kruskal's theorem makes no claim on border rank. 

\subsection{Main results} 
In this work, we settle the border rank question for Kruskal tensors. For a tensor decomposition~\eqref{eq:intro_decomp}, the \textit{Kruskal rank} of the $a$-vectors is defined as the largest integer $k_a$ such that every size $k_a$ subset of $\{a_1,\dots, a_r\}$ is linearly independent. The Kruskal ranks $k_b$ and $k_c$ are defined similarly. Kruskal's theorem states that if $2r \leq k_a+k_b+k_c-2$, then the decomposition~\eqref{eq:intro_decomp} is the unique tensor rank decomposition of $T$. Our first main result is that the border rank equals the rank in this case, even if the inequality is weakened by one:

\begin{theorem}[Theorem~\ref{thm:Kruskal_border_rank_precise}]
	Let $T \in \mathbb K^{n_1 \times n_2 \times n_3}$ be as in~\eqref{eq:intro_decomp}. If $2r \leq k_a + k_b + k_c -1$, then the border rank and rank of $T$ are both equal to $r$.
\end{theorem}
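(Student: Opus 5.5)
The plan is to argue by contradiction: assume $T$ has border rank at most $r-1$. Working over $\Kbar$, this means $T=\lim_{\varepsilon\to 0}T_\varepsilon$, where $T_\varepsilon=\sum_{j=1}^{r-1}a_j(\varepsilon)\otimes b_j(\varepsilon)\otimes c_j(\varepsilon)$ is a curve of tensors over the field of Laurent series. The equality rank $=r$ will then follow from Kruskal's theorem, since our hypothesis is only one weaker than his and rank $\ge$ border rank. The tool that survives limits is lower semicontinuity of matrix rank. For every $w\in(\K^{n_3})^*$, the contraction $T(w)=\sum_i w(c_i)\,a_i\otimes b_i$ is a limit of the matrices $T_\varepsilon(w)$. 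Hence
\[
\rk T(w)\le \liminf_{\varepsilon\to0}\rk T_\varepsilon(w)\le \#\{j: w(c_j(\varepsilon))\neq 0 \text{ generically in }\varepsilon\}.
\]
The same holds for subspaces: if $W\subset(\K^{n_3})^*$ annihilates a limit subspace, the corresponding contractions lose rank.

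The first step is the lower bound for true contractions, which is the standard input of Kruskal's argument. If $w$ vanishes on exactly the $c_i$ with $i\in S$, then $T(w)$ is a sum of $r-|S|$ rank-one terms $w(c_i)a_i\otimes b_i$. The Kruskal ranks then give
\[
\rk T(w)\ge \min(r-|S|,k_a)+\min(r-|S|,k_b)-(r-|S|).
\]
This is the Frobenius/Sylvester-type bound obtained by splitting the index set.

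The second step is to replace Kruskal's permutation lemma by a \emph{border} version. The plan is to take limits in Grassmannians. For each $d$, consider the spans of the $c_j(\varepsilon)$ over subsets of indices. Their limits in the relevant Grassmannians define a finite limiting configuration $\mathcal E$ of subspaces of $\K^{n_3}$, which plays the role of the "$c$-points" of the approximating decomposition. For any hyperplane $w$ containing a limit subspace $E\in\mathcal E$ of dimension $d$, the displayed semicontinuity bound should give $\rk T(w)\le (r-1)-d$, or an analogous count.

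The third step is the counting. Choose $w$ to vanish on $k_c-1$ (or suitably many) of the true $c_i$'s, subject to also containing a limit subspace. Comparing the lower bound from the first step with the upper bound from the second, the hypothesis $2r\le k_a+k_b+k_c-1$ should force an inequality of the form $\rk T(w)\ge r-|S|+\text{(slack)}$. This inequality exceeds the border upper bound. The one unit lost relative to Kruskal's bound is paid for by the fact that we only need to rule out length $r-1$, not uniqueness at length $r$.

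The main obstacle is the second step. Individual terms $a_j(\varepsilon)\otimes b_j(\varepsilon)\otimes c_j(\varepsilon)$ may diverge and cancel, so there are no limiting "points" $c_j(0)$ to compare with the $c_i$. The first thing I would try is to work only with limits of \emph{spans}: flattenings, and intersections of $\ker w$ with $\operatorname{span}\{c_j(\varepsilon)\}$. By semicontinuity, these give rank upper bounds on $T(w)$ for all $w$ in a limiting subspace of $(\K^{n_3})^*$ of the right dimension. Then I would run Kruskal's dimension count on these subspaces rather than on points. If that fails, the fallback is to apply the same contraction argument symmetrically in all three factors. In that version, one contracts $T$ with a subspace $W$ annihilating $k_c-1$ of the $c_i$, and reduces to a border-rank bound for the resulting matrix pencil, where border rank equals rank.
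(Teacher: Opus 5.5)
Your proposal has a genuine gap: Steps~2 and~3, which carry the whole argument, are only announced (``should give'', ``should force''), and Step~3 as formulated cannot be carried out. The target $\rk T(w)\ge r-|S|+\text{(slack)}$ with positive slack is impossible, because $T(w)$ is a sum of $r-|S|$ rank-one matrices. Requiring $w$ to vanish on $k_c-1$ \emph{prescribed} true $c_i$ and also on a limit subspace is over-determined. For example, if $k_c=n_3$, the annihilator of a $(k_c-1)$-dimensional limit subspace is a single line, which in general does not lie in $n_3-1$ chosen hyperplanes $c_i^\perp$.

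The machinery you anticipate needing (a limiting configuration $\mathcal E$ standing in for the $c$-points, and a border permutation lemma) is unnecessary. Your fallback also rests on a false claim: border rank can be smaller than rank for pencils. For instance, $e_1\otimes e_1\otimes e_2+e_1\otimes e_2\otimes e_1+e_2\otimes e_1\otimes e_1$ has rank $3$ but border rank $2$. Finally, Kruskal's theorem does not apply under $2r\le k_a+k_b+k_c-1$. Rank $=r$ instead follows directly from $r\ge\rk T\ge$ border rank $=r$.

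The missing idea is to fix the threshold $m=r-k_c$ and use the exact shape of $X_m(T)$. The hypothesis is equivalent to $r+m\le k_a+k_b-1$. So by Proposition~\ref{prop:standard-kruskal} (or your Step~1 bound), $\rk T(w)\le m$ forces $w$ to vanish on at least $k_c$ of the $c_i$. Thus $X_m(T)$ is the finite union of the $(n_3-k_c)$-dimensional spaces $\langle c_i:i\in I\rangle^\perp$ with $|I|=k_c$.

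On the approximating side one index set suffices, and your Step~2 upper bound is the right tool. Take any $J\subseteq[r-1]$ with $|J|=k_c-1$. The annihilator of $\langle c_j(\varepsilon):j\in J\rangle$ contains a subspace $L_\varepsilon$ of dimension $n_3-k_c+1$ on which every contraction of $T_\varepsilon$ has rank at most $(r-1)-(k_c-1)=m$. By properness of the Grassmannian, $L_\varepsilon$ has a limit $L_0$ of the same dimension, and lower semicontinuity of rank gives $L_0\subseteq X_m(T)$. But a generic $w\in L_0$ avoids the finitely many proper subspaces $L_0\cap\langle c_i:i\in I\rangle^\perp$ with $|I|=k_c$. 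So it vanishes on at most $k_c-1$ of the $c_i$, hence $\rk T(w)\ge m+1$, a contradiction.

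This is the curve argument of Remark~\ref{rem:choice-of-m}, made general. The paper's proof performs the same comparison, $\dim X_m(T)=n_3-r+m$ versus $\dim X_m(S)\ge n_3-s+m$ for every $S$ of border rank at most $s$. It does so via upper semicontinuity of fibre dimension for $\{([S],[f]):\rk S_f\le m\}\to\P(\sigma_s)$, with the flattening bound $s\ge m+1$ ensuring the count applies.
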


As a second contribution, we prove an analogous Kruskal-type theorem for alternating tensors. For an alternating tensor $T \in \Lambda^3(\mathbb K^n)$, a \emph{skew decomposition} is a decomposition of the form
\begin{align}\label{eq:intro_adecomp}
	T=\sum_{i=1}^r a_{i} \wedge b_{i} \wedge c_{i}.
\end{align}
We define the \textit{Kruskal rank} of the decomposition to be the largest integer $k$ for which every collection of $k$ of the three-dimensional subspaces $U_i:=\langle a_i, b_i, c_i \rangle \subseteq \mathbb K^n$ is in direct sum. Our second main result is the following:
\begin{theorem}[informal]
	Let $T \in \Lambda^3(\mathbb K^n)$ be as in~\eqref{eq:intro_adecomp}.
	\begin{enumerate}
		\item If $2r \leq 3k-2$, then~\eqref{eq:intro_adecomp} is the unique skew rank decomposition of $T$.
		\item The inequality $2r \leq 3k-2$ is sharp, analogously to Kruskal's theorem (see~\cite{derksen2013kruskal}).
		\item If $2r \leq 3k-1$, then the border skew rank and skew rank of $T$ are both equal to $r$.
		\item If $2r \leq 3k-2$, then the unique skew rank decomposition can be recovered from $T$ in time $n^{\mathcal{O}(r-k+1)}$.
	\end{enumerate}
\end{theorem}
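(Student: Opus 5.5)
The plan is to transplant Derksen's linear-algebraic proof of Kruskal's theorem \cite{derksen2013kruskal} to the skew setting. The basic tool is contraction. For a linear form $x\in(\mathbb K^n)^*$ we contract $T$ with $x$ to obtain a skew matrix $T(x)\in\Lambda^2(\mathbb K^n)$, and
\[
T(x)=\sum_{i=1}^r x\lrcorner(a_i\wedge b_i\wedge c_i).
\]
Each summand $x\lrcorner(a_i\wedge b_i\wedge c_i)$ lies in $\Lambda^2 U_i$. It has rank exactly $2$ if $x|_{U_i}\neq 0$, and it vanishes otherwise. So for $x$ with $N_U(x):=\#\{i : x|_{U_i}\neq0\}\le k$, the nonzero summands live in subspaces that are in direct sum, and hence $\rk T(x)=2N_U(x)$. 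In general we only get $\rk T(x)\le 2N_U(x)$, and the same bound holds for any competing decomposition $T=\sum_{j=1}^s a'_j\wedge b'_j\wedge c'_j$ with spaces $V_j$. These two facts play the roles of the slice ranks in Kruskal's theorem.

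For part (1), take a competing decomposition with $s\le r$. The first step is a skew version of Kruskal's permutation lemma: if $N_V(x)\le N_U(x)$ whenever $N_V(x)$ is small, then the $V_j$ coincide with the $U_i$ up to permutation. To verify the hypothesis, choose $x$ vanishing on a union of $U_i$'s. The count $2r\le 3k-2$ is what lets us pick $x$ killing about $r-k+1$ of the $U_i$ while the remaining sum is still in direct sum. I expect this matching argument to be the main obstacle. It needs the weighted count of ``$3$ dimensions per term against $2$ per rank of a contraction'' that produces the constant $3k$. I would try to model it on Derksen's induction on the number of terms, restricting to the annihilator of a single $U_i$ at each step.

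Once $\{V_j\}=\{U_i\}$ and $s=r$, both sides of the equation decompose along $\bigoplus_i\Lambda^3 U_i$, at least for the $k$-direct-sum blocks. Since $\dim\Lambda^3U_i=1$, each term is then determined. Propagating to all terms uses a covering argument by $k$-subsets.

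For part (2), sharpness, I would construct an example with $2r=3k-1$ and two distinct decompositions, adapting Derksen's counterexample. A natural candidate uses the fact that $\Lambda^3$ of a $6$-dimensional space admits several decompositions, combined with a gluing construction.

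For part (3), border rank, suppose $T_\varepsilon\to T$ with $T_\varepsilon$ of skew rank $s<r$. Pass to a subsequence so that the spaces $V_j(\varepsilon)$ converge in the Grassmannian to limits $V_j$. Lower semicontinuity of rank gives $\rk T(x)\le\liminf_\varepsilon\rk T_\varepsilon(x)$. The rank bound for $T_\varepsilon(x)$ must be made closed. To do this, replace ``$x$ vanishes on $V_j$'' by the limit condition: the $j$-th summand of $T_\varepsilon(x)$ lies in $\Lambda^2(V_j(\varepsilon)\cap\ker x)$. A dimension count on this space is upper semicontinuous in $\varepsilon$, so $\rk T(x)\le 2N_V(x)$ persists with the limiting $V_j$. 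Feeding this into the inequality argument of part (1) should give a contradiction. The extra slack from $3k-1$ instead of $3k-2$ is used because we only need to rule out $s\le r-1$.

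For part (4), the algorithm enumerates candidate sets of roughly $r-k+1$ linear conditions. For each candidate, it finds $x$ in a common annihilator and computes the kernel and image of $T(x)$; a rank drop signals that $x$ kills the corresponding $U_i$'s. The spaces $U_i$ are then recovered as intersections of images. Each candidate costs polynomially many operations, and the number of candidates gives the $n^{\mathcal O(r-k+1)}$ bound.
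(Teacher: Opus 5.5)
Your proposal has genuine gaps in all four parts. The decisive ones are that the step carrying (1) is deferred, and that the limiting claim in (3) is false.

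In (1), the skew permutation lemma is the whole difficulty, and as stated it points the wrong way. Your two rank facts give $N_U(x)\le N_V(x)$, not $N_V\le N_U$: this holds whenever $N_U(x)\le k$, and, via Lemma~\ref{lem:bipartite_alt}, whenever $N_V(x)\le 2k-r-1$. You also never control the competing decomposition; the paper needs both $s=r$ and that the $V_j$ have Kruskal rank at least $k$. The paper replaces the permutation lemma by an intrinsic object. For $m\in\{r-k+1,\dots,2k-r-1\}$ and $h=r-m$ it shows $X_{2m}(T)=\bigcup_{|I|=h}\langle U_i:i\in I\rangle^\perp$. It then uses dimension counts on $X_{2m}(T)$ and $X_{2(r-k)}(T)$ to get $s=r$ and $k'\ge k$. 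Hence the $h$-spans of the $V_j$ give the same $\binom{r}{h}$ irreducible components, and each $U_i$ is recovered by iterated intersections (Lemmas~\ref{lem:combinatorics_general} and~\ref{thm:combinatorics_general}). Your final step presupposes what it must show: $\sum_i\Lambda^3U_i$ is only known to be direct over $k$-subsets. The paper gets independence of all $r$ terms from a generic contraction and Lemma~\ref{lem:bipartite_alt} with $m=0$, using $r\le 2k-1$. For (2) you give no construction. The missing idea is to take Derksen's vanishing sum $\sum_{i=1}^q a_i\otimes b_i\otimes c_i=0$ with $q\le 2r$ and $k_a,k_b,k_c\ge k$. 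Move the three factors into complementary blocks ($a_i\otimes e_1$, $b_i\otimes e_2$, $c_i\otimes e_3$), antisymmetrize, and split into $r$ and $q-r\le r$ terms. The block structure is exactly what puts every $k$ of the new $3$-spaces in direct sum.

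In (3), the assertion that $\rk T(x)\le 2N_V(x)$ survives with the limiting $V_j$ is false. Take $a,b,c,a'$ independent and $T_\varepsilon=\varepsilon^{-1}(a+\varepsilon a')\wedge b\wedge c-\varepsilon^{-1}a\wedge b\wedge c$, which equals $a'\wedge b\wedge c$. Both limiting spaces are $\langle a,b,c\rangle$, so any $x$ vanishing on $a,b,c$ but not on $a'$ has $N_V(x)=0$, while $\rk T(x)=2$. Coefficients can blow up, so a summand whose limiting space is killed by $x$ still contributes. Both sides of your inequality are only lower semicontinuous, so it does not pass to the limit. What does pass to the limit is a lower bound on $\dim X_{2m}$. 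For $s\ge m$ and $|J|=s-m$, the space $\langle V_j(\varepsilon):j\in J\rangle^\perp$ has dimension at least $n-3(s-m)$ and lies in $X_{2m}(T_\varepsilon)$. A Grassmannian limit of $(n-3(s-m))$-dimensional subspaces of these lies in $X_{2m}(T)$, because $\rk\le 2m$ is closed. (If $s<m$, then already $X_{2m}(T)=\Kbar^n$.) With $m=r-k$, Lemma~\ref{lem:contraction-variety-kruskal-skew} gives $\dim X_{2m}(T)=n-3k<n-3(s-m)$ if $s<r$, a contradiction that needs no permutation argument. The paper phrases this through upper semicontinuity of fiber dimension over $\sigma_s$, which works over any field.

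In (4) there is nothing finite to enumerate, since the $U_i$ are unknown. A rank drop of $T(x)$ is a nonlinear Pfaffian condition on $x$ that random choices miss. The paper's $n^{\mathcal O(r-k+1)}$ comes from three steps, with $m=2k-r-1$ and $h=2r-2k+1$. First, on random affine slices $f+E$ with $\dim E=3h$, it solves a zero-dimensional system of $3h+1$ random Pfaffians of degree $m+1$ by Gr\"obner bases, at cost $(m+1)^{\mathcal O(h)}$. Second, it matches witness points across slices to get bases of the components (Lemma~\ref{lem:parallel-slice-component-recovery}). Third, it peels by intersections as in (1).
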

These four statements are proven formally in Theorems~\ref{thm:alternating_kruskal},~\ref{thm:alternating_sharp},~\ref{thm:border-skew-rank=skew-rank} and~\ref{thm:algorithmic-alternating-kruskal}, respectively. Analogous algorithmic results for Kruskal's original theorem are known~\cite{domanov2014canonical,BSS26}; see the related work section for more details.

\subsection{Techniques}

Our techniques are of independent interest. We associate \emph{contraction varieties} $ X_m(T) $ with a tensor $ T $. Those are the varieties of contractions of a 3-tensor for which the resulting matrix has a bounded rank. Contraction varieties are classically studied invariants of tensors. They are a useful tool with a wide range of applications, which the authors recently used in an algorithm for Chow decompositions~\cite{Blomenhofer_Lovitz_2025}. We refer to \cite{Blomenhofer_Lovitz_2025} for further related work on contraction varieties. We will formally introduce contraction varieties in \Cref{sec:prelims}. 

As a further application of contraction varieties, we use them to give a more geometric proof of Kruskal's theorem in \Cref{sec:standard-kruskal}. Previous proofs of Kruskal's theorem relied on Kruskal's permutation lemma~\cite{Kruskal_1977,gubkin2024unique} or the splitting theorem~\cite{Lovitz_Petrov_2023}. These proofs are highly combinatorial in nature. Our proof is more geometric, in the sense that it establishes the uniqueness result mainly by computing the irreducible components of the contraction varieties of Kruskal tensors, with only minor combinatorics.

There are two main advantages of the more geometric approach. First, it allows us to also make statements about border rank, which is done in \Cref{sec:border-rank=rank}. Second, it generalizes easily to other settings, particularly to skew rank in \Cref{sec:alternating-kruskal}. 

In addition, there are two smaller advantages. First, the geometric approach allows us to give an intrinsic definition of Kruskal tensors, which does not make use of the rank decomposition. In particular, the intrinsic definition can be used to show that the set of $ r $-Kruskal tensors is a Zariski open subset of the set of rank-$ r $ tensors. See \Cref{sec:algorithms-and-examples} and \Cref{cor:Kruskal-tensors-open}.  Second, the framework of contraction varieties sometimes allows to obtain algorithms. As we remarked in \cite{Blomenhofer_Lovitz_2025}, whenever the contraction variety is a subspace arrangement of constant codimension, it can be computed efficiently. It turns out that this is sometimes the case for alternating Kruskal tensors, as we observe in~\Cref{sec:algorithmic_alternating}.

\subsection{Related work}
Kruskal's theorem \cite{Kruskal_1977} is one of the classical early uniqueness results for specific 3-tensors. It generalizes the uniqueness result attributed to Harshman and Jennrich~\cite{Harshman_1970} and remains a standard tool to establish the identifiability of a low-rank tensor. 
There exist several generalizations of Kruskal's theorem, notably~\cite{Lovitz_Petrov_2023,gubkin2024unique}. Reshaped Kruskal criteria exist for tensors of higher order. As Kruskal's original publication features a famously difficult proof, various attempts have also been made to find more digestible proofs of the theorem. 
Rhodes \cite{Rhodes_2010} gave a concise combinatorial proof of Kruskal's theorem. Landsberg \cite{Landsberg_2009} rephrased Kruskal's theorem in a more geometric language, but still relied on Kruskal's permutation lemma. Lovitz and Petrov gave a matroidal proof of Kruskal's theorem that does not rely on the permutation lemma~\cite{Lovitz_Petrov_2023}. Our proof in \Cref{sec:standard-kruskal} does not rely on Kruskal's permutation lemma, but instead uses primarily uniqueness of the decomposition of the contraction variety into irreducible components. 

A first algorithmic proof of Kruskal's theorem was given by Domanov and de Lathauwer \cite{domanov2014canonical}. The underlying proof technique is similar to our proof in \Cref{sec:standard-kruskal}. Our contribution here is a more geometric perspective, as we reprove Kruskal's theorem from the irreducible decomposition of the contraction variety. A recent algorithm for undercomplete skew-rank decomposition, allowing the rank to be up to $ r = \lfloor n/3\rfloor $,  was given in \cite{Vannieuwenhoven_2026_Chiseling}. 

While preparing this manuscript, we became aware of the recent work~\cite{BSS26} which also gives an algorithmic proof of Kruskal's theorem, similar to \cite{domanov2014canonical}, but with a thorough runtime analysis. The algorithmic uniqueness proof in \cite{BSS26} shares conceptual similarities both to the algorithm of Domanov and de Lathauwer \cite{domanov2014canonical} and to the geometric proof of (the standard) Kruskal's theorem presented in this work. 
Motivated by the approaches of \cite{domanov2014canonical,BSS26,Vannieuwenhoven_2026_Chiseling} and \cite{Blomenhofer_Lovitz_2025}, we developed an algorithmic version of our skew-symmetric Kruskal's theorem, presented in \Cref{sec:algorithmic_alternating}. In particular, we obtain an algorithm for skew-rank decomposition under an alternating Kruskal's condition whose runtime guarantees are comparable to those established in \cite{BSS26} for ``non-skew'' Kruskal tensors.

An open question of Bhaskara et al. asks whether there exists a \emph{polynomial-time} algorithmic formulation of Kruskal's theorem~\cite{bhaskara2014open}. Note that neither of the algorithms mentioned above runs in polynomial time. In fact, we show in \Cref{sec:algorithms-and-examples} that computing the dimension of contraction varieties of Kruskal tensors is NP hard in the worst case, making it unlikely to find efficient algorithms via this route.

\subsection{Model of computation} Throughout, we work
in the arithmetic circuit model of computation. Hence, we assume that addition, multiplication, and division over an arbitrary field can be conducted at unit cost. 
Furthermore, we assume access to an oracle that returns the roots of a univariate polynomial at unit cost per root. We also assume that we can  sample uniformly at random from a size-$k$ subset of $\K$ in time $\log k$.
We note that similar models of computation have been used in several tensor decomposition algorithms~\cite{johnston2023computing,koiran2025efficient,Kothari_Moitra_Wein_2025}.

	\section{Preliminaries}\label{sec:prelims}

\subsection{Notation}
We let $\mathbb K$ be a field with algebraic closure $\overline{\mathbb K}$, and consider tensors $ T\in \mathbb K^{n_1\times n_2\times n_3} $, of dimensions $ n_1, n_2, n_3\in \N $. In cases where the tensor dimensions are equal, we write $ n := n_1 = n_2 = n_3 $. The space of symmetric 3-tensors is denoted by $ S^3(\mathbb K^n) $ and the space of skew-symmetric 3-tensors is denoted by $ \Lambda^3(\mathbb K^n) $. 
Tensor decompositions are typically denoted as $ T = \sum_{i = 1}^{r} a_i\otimes b_i\otimes c_i $, where $ r\in \N_0 $ is the length of the decomposition (which upper-bounds the rank of the tensor) and $ a_i\in \mathbb K^{n_1}, b_i\in \mathbb K^{n_2}, c_i\in \mathbb K^{n_3} $. Formally, a tensor decomposition of $ T $ is a set of triples $ \{(a_1, b_1, c_1),\ldots,(a_r, b_r, c_r)\} $ such that $ T = \sum_{i = 1}^{r} a_i\otimes b_i\otimes c_i  $.

We fix a standard basis $e_1,\dots, e_n$ for $\overline{\mathbb{K}}^n$ and let $\langle \cdot, \cdot \rangle$ be the associated bilinear form. For a vector $f \in \Kbar^n$ we let $f^T=\langle f, \cdot \rangle \in (\Kbar^n)^*$.

\subsection{Contraction varieties}

\begin{defi}
	For a $ 3 $-tensor $ T\in \mathbb K^{n_1\times n_2\times n_3} $, we denote by $ T_f^{(i)} $ its contraction by the vector $ f\in \overline{\mathbb K}^{n_i} $ along the $i$-th index, where $i\in\{1,2,3\}$. Note that $ T_f^{(i)} $ is a matrix for each $f$ and $i$. E.g., $ T_f^{(3)} $ is a matrix of dimensions $n_1\times n_2$. Furthermore, we denote by $$ X_k^{(i)}(T) = \{f\in \Kbar^{n_i} \mid \rk T_f^{(i)}\le k\} $$ the \emph{rank-k contraction variety} of $ T $.

    In slight abuse of notation, we often write $ T_f $ and $ X_k(T)$ without indicating whether we mean the contraction along the first, second or third index. 
    It will be clear from the context which contraction variety is meant.
\end{defi}

\subsection{Kruskal tensors}

\begin{defi}
	Let $ \{a_1,\ldots,a_r\} $ be a multiset of $ r\ge 1 $ nonzero vectors in $ \mathbb K^{n} $. We define the Kruskal rank (short: k-rank) of $ \{a_1,\ldots,a_r\} $ to be the largest number $ \ell \in \{1,\ldots,r\} $ such that each subset of $ \{a_1,\ldots,a_r\} $ of cardinality $ \ell $ is linearly independent. 
\end{defi}

\begin{defi}
	Let $ \{a_1,\ldots,a_r\} \subseteq \mathbb K^{n_1} $ be a multiset of $ r\ge 1 $ nonzero vectors and likewise $ \{b_1,\ldots,b_r\} \subseteq \mathbb K^{n_2} $ and $ \{c_1,\ldots,c_r\} \subseteq \mathbb K^{n_3} $. Denote their k-ranks by $ k_a, k_b $ and $ k_c $. We say that the sets $ \{a_1,\ldots,a_r\}, \{b_1,\ldots,b_r\} $ and $ \{c_1,\ldots,c_r\} $ \emph{satisfy Kruskal's condition}, if $ 2r\le k_a + k_b + k_c - 2 $.
\end{defi}

\begin{defi}
	A \emph{Kruskal tensor} is a tensor $ T\in \mathbb K^{n_1\times n_2\times n_3} $ which has a decomposition $ T = \sum_{i = 1}^{r} a_i\otimes b_i\otimes c_i $ for which the multisets of vectors $\{a_i\}, \{b_i\},\{c_i\}$ satisfy {Kruskal's condition}, i.e. $ 2r \le k_a + k_b + k_c - 2  $.
\end{defi}

\noindent
We collect a few facts about Kruskal tensors, which can be found in \cite{Lovitz_Petrov_2023}.

\begin{prop}\label{prop:standard-kruskal}
	Let $a_1,\dots, a_r \in \mathbb K^{n_1}$ be vectors with Kruskal rank $k_a$, let $b_1,\dots, b_r \in \mathbb K^{n_2}$ be vectors with Kruskal rank $k_b$, and let $m$ be a non-negative integer for which $r+m \leq k_a+k_b-1$. If $\sum_{i=1}^r \alpha_i a_i b_i^T$ has rank $\leq m$ for some $\alpha=(\alpha_1,\dots, \alpha_r) \in \Kbar^r$, then $\omega(\alpha) \leq m$, where $\omega(\alpha)$ denotes the number of nonzero entries in $\alpha$.
\end{prop}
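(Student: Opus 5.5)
Let $S=\{i:\alpha_i\neq 0\}$ with $s=|S|=\omega(\alpha)$, and suppose for contradiction that $s\ge m+1$. Write $M=\sum_{i\in S}\alpha_i a_i b_i^T = A_S D B_S^T$, where $A_S$ has columns $a_i$ ($i\in S$), $B_S$ has columns $b_i$, and $D=\mathrm{diag}(\alpha_i)_{i\in S}$ is invertible. The goal is to show $\rk M\ge m+1$, by restricting $M$ to a suitable subset of the columns where both factors are injective.

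The key tool is Sylvester's rank inequality. For a $p\times s$ matrix $P$ and an $s\times q$ matrix $Q$ it gives $\rk(PQ)\ge \rk P+\rk Q - s$. Apply it to $P=A_S$ and $Q=DB_S^T$. By the definition of Kruskal rank, $\rk A_S\ge \min(s,k_a)$ and $\rk B_S\ge \min(s,k_b)$. Hence
\[
\rk M \ge \min(s,k_a)+\min(s,k_b)-s .
\]

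I then split into cases according to the two minima.

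If $s\le k_a$ and $s\le k_b$, the bound is $s\ge m+1$, and we are done.

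If $s\le k_a$ but $s>k_b$, the bound is $k_b$. I need $k_b\ge m+1$. If instead $k_b\le m$, then from $r+m\le k_a+k_b-1$ we get $k_a\ge r+m-k_b+1\ge r+1$. This is impossible, since $k_a\le r$. So $k_b\ge m+1$. The symmetric case $s>k_a$, $s\le k_b$ is handled the same way.

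If $s>k_a$ and $s>k_b$, the bound is $k_a+k_b-s$. Since $s\le r$, this is at least $k_a+k_b-r\ge m+1$, using the hypothesis $r+m\le k_a+k_b-1$.

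So in every case $\rk M\ge m+1$, contradicting $\rk M\le m$. Therefore $\omega(\alpha)\le m$. The computation is done over $\Kbar$, but ranks and linear independence are unchanged under field extension, so this causes no problem.

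The only step needing care is this case analysis, specifically making sure the hypothesis suffices when only one of $k_a,k_b$ is exceeded by $s$. There the bound $k_a\le r$ (or $k_b\le r$) resolves it. No earlier results from the paper are required; the argument is self-contained linear algebra.
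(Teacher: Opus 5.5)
Your proof is correct and follows essentially the same route as the paper. Both arguments split into the same three regimes ($\omega(\alpha)\le\min(k_a,k_b)$, between the two Kruskal ranks, above both) and get the same lower bounds $\omega(\alpha)$, $\min(k_a,k_b)$ and $k_a+k_b-\omega(\alpha)$; you obtain all three at once from Sylvester's rank inequality, whereas the paper derives the last one by splitting off a $k_b$-subset and using subadditivity of rank.
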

\begin{proof}
	Suppose toward contradiction that there exists $\alpha \in \Kbar^r$ such that $\sum_{i=1}^r \alpha_i a_i b_i^T$ has rank $\leq m$ but $\omega(\alpha)>m$. Assume without loss of generality that $k_a \leq k_b$. If $\omega(\alpha)\leq k_a$ then we have a contradiction, since the Kruskal rank assumption would then imply that the linear combination has rank $\omega(\alpha)$. If $k_a < \omega(\alpha) \leq k_b$, then
	\begin{align*}
		\rk(\sum_{i=1}^r \alpha_i a_i b_i^T) \geq k_a,
	\end{align*}
	a contradiction since by assumption, we have $k_a \geq r-k_b + m + 1 \ge m+1$. Suppose now that $\omega(\alpha)>k_b$, and note that $\omega(\alpha)-k_b \leq r-k_b \leq k_a-m-1$. Let $I \subseteq [r]$ be a subset of size $k_b$ for which $\alpha_i \neq 0$ for all $i \in I$. Then 
	\begin{align*}
		\rk(\sum_{i=1}^r \alpha_i a_i b_i^T) &\geq \rk(\sum_{i \in I} \alpha_i a_i b_i^T)-\rk(\sum_{i \notin I} \alpha_i a_i b_i^T)\\
		&\geq k_a-(k_a-m-1) = m+1.
	\end{align*}
	Here, in the second inequality we used $\rk(\sum_{i \in I} \alpha_i a_i b_i^T) \geq k_a$, which follows from the Kruskal rank assumption. This is a contradiction and completes the proof.
\end{proof}

\subsection{Alternating tensors}
In the following, we will also collect some generalizations of Kruskal's condition, which will be important for alternating tensors. Let us assume that $ \K $ is a field of characteristic zero or at least $ 5 $ (we make this assumption so that we can take the standard embedding $\Lambda^3(\mathbb K^n) \subseteq (\mathbb K^n)^{\otimes 3}$, although we suspect that our methods work also in smaller characteristic).
Each alternating tensor $ T\in \Lambda^3(\mathbb K^n) $ may be written as a sum of wedge products
\begin{align}\label{eq:skew-rank-decomp}
	T = \sum_{i = 1}^{r} a_i\wedge b_i\wedge c_i, 
\end{align}
where $x_1 \wedge x_2 \wedge x_3 := \frac{1}{3!} \sum_{\sigma \in \mathfrak{S}_3} \mathrm{sgn}(\sigma) x_{\sigma(1)} \otimes x_{\sigma(2)} \otimes x_{\sigma(3)}$. We assume that all terms in the decomposition are non-zero, so $ a_i, b_i, c_i\in \mathbb K^n $ span a 3-dimensional subspace $ U_i = \langle a_i, b_i, c_i \rangle $. If $ r $ is minimal, then \eqref{eq:skew-rank-decomp} is called a \emph{skew-rank decomposition} and $ r $ is called the \emph{skew rank} of $ T $, denoted $ \skrk(T) $. Some authors prefer to call $ \skrk(T) $ the \emph{alternating rank}. In the context of quantum information, $ \skrk(T) $ is also often called the \emph{Slater rank} \cite{Vrana_Levay_2008_Slater}. 

The main difference between skew rank and ``normal'' rank is that a skew-rank 1 term is associated with a 3-dimensional subspace $ \langle a_i, b_i, c_i \rangle $, whereas a tensor-rank 1 term is associated with three one-dimensional subspaces $ \langle a_i \rangle, \langle b_i \rangle $ and $ \langle c_i \rangle $. Therefore, it makes sense to consider a Kruskal condition more generally for subspaces rather than vectors. 

\begin{defi}
	Let $ \{U_1,\ldots,U_r\} $ be a multiset of $ r\ge 1 $ nonzero subspaces of $ \mathbb K^{n} $. We define the Kruskal rank (short: k-rank) of $ \{U_1,\ldots,U_r\} $ to be the largest number $ \ell $ such that each subset of $ \{U_1,\ldots,U_r\} $ of cardinality $ \ell $ is in direct sum. In other words, for all pairwise distinct choices of $ i_1,\ldots,i_{\ell} \in \{1,\ldots,r\} $, 
	\begin{align*}
		\dim (U_{i_1} + \ldots + U_{i_{\ell}}) = \dim U_{i_1} + \ldots + \dim U_{i_{\ell}}.
	\end{align*}
	The Kruskal rank of $ \{U_1,\ldots,U_r\} $ is denoted by $ k_U $. 
\end{defi}

Here, we denote by $ U_{i_1} + \ldots + U_{i_{\ell}} $ the subspace of $ \mathbb K^n $ generated by $ U_{i_1},\ldots,U_{i_{\ell}} $. Note that when the subspaces $U_i$ are all of the same dimension $d$, then $ k_U \le \frac{n}{d}$.

\begin{defi}
	We say that a collection $ \{U_1,\ldots,U_r\} $ of $ r\ge 1 $ nonzero subspaces of $ \mathbb K^{n} $ with Kruskal rank $ k_U $ satisfies the \emph{alternating Kruskal condition}, if 
	\begin{align*}
		2r \le 3k_U - 2. 
	\end{align*}
\end{defi}

\begin{defi}
	An \emph{alternating Kruskal tensor} is a tensor $ T\in \Lambda^3(\mathbb K^n) $ which has a decomposition $ T = \sum_{i = 1}^{r} a_i\wedge b_i\wedge c_i $ such that the spaces $ U_i := \langle a_i, b_i, c_i \rangle $ satisfy the alternating Kruskal condition. In other words,
	\begin{align*}
		2r \le 3k_U - 2.
	\end{align*}
\end{defi}

\subsection{Border rank}
The set of tensors $T\in \K^{n_1\times n_2\times n_3}$ of rank $r$ is in general not closed with respect to the Zariski topology. A similar problem arises for the set of alternating tensors $T\in \Lambda^3(\K^{n})$ of skew rank $r$. Introduced below, the notion of \emph{border rank} addresses this issue. Note that we say that an algebraic $\K$-variety $X$ in a $\K$-linear space $V$ is an \emph{affine cone} if it is closed under multiplication with scalars, so that $\K\cdot X \subseteq X$. We also assume in the following that the set $X$ spans $V$, i.e., $\langle X \rangle_{\K} = V$. This technical condition is to avoid elements whose border rank is infinite. 

\begin{defi}
    Let $X$ be an irreducible affine cone in a linear space $V$. 
    The rank-$s$ \emph{secant variety} is the Zariski closure of the set 
    \[
        \{x_1 + \ldots + x_s \mid x_1,\ldots, x_s \in X\}
    \]
    We denote the rank-$s$ secant variety by $\sigma_s(X)$. The \emph{border $X$-rank} of $T\in V$ is the smallest $s$ such that $T\in \sigma_s(X)$.
\end{defi}

\begin{remark}
   The two concrete examples of border rank studied in this paper are the following. 
    \begin{enumerate}
        \item Border tensor rank, often simply called border rank. Here, 
        \[
            X = \{a\otimes b\otimes c\mid a\in \K^{n_1},b\in \K^{n_2},c\in \K^{n_3}\}
        \]
        is the affine cone over the Segre variety of product tensors and $V =  \K^{n_1\times n_2\times n_3}$. 
        \item Border skew rank. Here, 
        \[
            X = \{a\wedge b\wedge c\mid a,b,c\in \K^{n}\}
        \]
        is the affine cone over the Grassmannian variety of decomposable alternating tensors and $V =  \Lambda^3(\K^{n})$. 
    \end{enumerate}
\end{remark}

\begin{remark}
    For $\K \in \{\R, \C\}$, the set of tensors $T\in \K^{n_1\times n_2\times n_3}$ of rank $r$ is generally also not closed with respect to the Euclidean topology.
    It is possible to ask for the border ranks with respect to the Euclidean topology as well. For $\K = \C$, this distinction makes no difference, since the Euclidean closure and the Zariski closure of the set of rank $r$ tensors in $\K^{n_1\times n_2\times n_3}$ agree. However, for $\K=\R$, one obtains different notions, which have been studied separately. See for instance the work \cite{Bernardi_Blekherman_Ottaviani_2018} on real typical ranks. 
\end{remark}

	\section{A more geometric proof of Kruskal's theorem}\label{sec:standard-kruskal}

In this section, we will give a proof of Kruskal's theorem based on contraction varieties. The proof of \Cref{thm:kruskal} combines two combinatorial (or matroidal) facts, \Cref{lem:combinatorics_general} and \Cref{thm:combinatorics_general}, with geometric reasoning.

\begin{lemma}\label{lem:combinatorics_general}
	Let $\mathbb K$ be a field, let $d\ge 1$, let
	$U_1,\ldots,U_r\subseteq \mathbb K^n$ be $d$-dimensional subspaces with
	Kruskal rank at least $k$, and let $\ell\in\{2,\ldots,k-1\}.$
	For each subset $I\subseteq [r]$ with $|I|=\ell$, write $U_I:=\langle U_i : i \in I \rangle$. Let $S$ be a collection of $r-\ell+1$ distinct
	$\ell$-element subsets of $[r]$ for which
	$\dim(\bigcap_{I\in S}U_I)=d\ell-d.$ Then there exists a subset $J\subseteq [r]$ with $|J|=\ell-1$ such that
	\[
		S=\{J\cup\{i\}:i\in [r]\setminus J\}.
	\]
\end{lemma}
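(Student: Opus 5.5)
The plan is to show that any two members of $S$ meet in $\ell-1$ elements and that these pairwise intersections all coincide; a counting argument then finishes. Throughout, write $W:=\bigcap_{I\in S}U_I$, so that $\dim W=d\ell-d$ by hypothesis.

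\textbf{Step 1 (the only linear algebra needed).} For subsets $A,B\subseteq[r]$ with $|A\cup B|\le k$, I claim
\[
U_A\cap U_B=U_{A\cap B}, \qquad \dim U_A=d|A|,
\]
with the convention $U_\emptyset=0$. The dimension statement is immediate, since the spaces $U_i$, $i\in A\cup B$, are in direct sum by the Kruskal rank assumption. For the intersection, take $x\in U_A\cap U_B$. Write $x=\sum_{i\in A}x_i$ and $x=\sum_{i\in B}y_i$ with $x_i,y_i\in U_i$, and view both as expressions in $\bigoplus_{i\in A\cup B}U_i$. Uniqueness of components then forces $x_i=0$ for $i\in A\setminus B$ and $y_i=0$ for $i\in B\setminus A$, so $x\in U_{A\cap B}$. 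The reverse inclusion is trivial.

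A consequence is that $J\mapsto U_J$ is injective on subsets of size $\le k$ whose pairwise unions have size $\le k$. Indeed, if $U_J=U_{J'}$, then $U_J=U_J\cap U_{J'}=U_{J\cap J'}$. Comparing dimensions (using $d\ge1$) gives $|J|=|J\cap J'|=|J'|$, hence $J=J'$.

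\textbf{Step 2.} Since $\ell\le k-1<r$, we have $|S|=r-\ell+1\ge2$, so we may pick two distinct members $I_1,I_2\in S$. Then $|I_1\cup I_2|\le\ell+1\le k$, so Step 1 gives
\[
W\subseteq U_{I_1}\cap U_{I_2}=U_{I_1\cap I_2}.
\]
The left side has dimension $d(\ell-1)$ and the right side has dimension $d|I_1\cap I_2|\le d(\ell-1)$, because $I_1\ne I_2$. Hence $|I_1\cap I_2|=\ell-1$ and $W=U_J$ with $J:=I_1\cap I_2$.

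\textbf{Step 3.} Take any $I\in S$ with $I\ne I_1$. Applying Step 2 to the pair $I,I_1$ gives $W=U_{I\cap I_1}$ with $|I\cap I_1|=\ell-1$. Thus $U_{I\cap I_1}=U_J$, and both $I\cap I_1$ and $J$ lie inside $I_1$, whose size is $\ell\le k$. The injectivity from Step 1 therefore yields $I\cap I_1=J$, so $J\subseteq I$. The case $I=I_1$ is clear.

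\textbf{Step 4.} By Step 3 every member of $S$ has the form $J\cup\{i\}$ with $i\in[r]\setminus J$. There are exactly $r-\ell+1$ such sets, and $|S|=r-\ell+1$, so $S$ is the full family $\{J\cup\{i\}:i\in[r]\setminus J\}$.

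The main point requiring care is Step 1: both the intersection formula and the injectivity genuinely need all unions involved to have size at most $k$. The hypothesis $\ell\le k-1$ is exactly what guarantees this for pairs of $\ell$-sets.
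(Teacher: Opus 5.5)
\textbf{Gap in Step 2.} You assert $|I_1\cup I_2|\le \ell+1$, but for two arbitrary distinct $\ell$-subsets you only know $|I_1\cup I_2|=2\ell-|I_1\cap I_2|\le 2\ell$. The bound $\ell+1$ is equivalent to $|I_1\cap I_2|\ge \ell-1$, which is exactly what Step 2 is meant to prove, so the argument is circular. Step 1 cannot be invoked, and Step 3 inherits the same problem. Your closing remark is also inaccurate: $\ell\le k-1$ controls $|I_2\cup\{a\}|$ for a single $a\in I_1\setminus I_2$, not $|I_1\cup I_2|$.

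\textbf{Why this cannot be patched pairwise.} For $\ell\le k-2$ a local repair does exist. If $|I_1\cup I_2|>k$, then $\dim(U_{I_1}+U_{I_2})\ge dk$, which forces $\dim(U_{I_1}\cap U_{I_2})\le d(2\ell-k)<d(\ell-1)$. This contradicts $W\subseteq U_{I_1}\cap U_{I_2}$. The repair fails exactly at $\ell=k-1$, which is the first case Lemma~\ref{thm:combinatorics_general} needs. There the pairwise claim is simply false. Take $d=1$, $k=4$, $\ell=3$, with $U_i=\langle e_i\rangle$ for $i\le 4$ and $U_5=\langle e_1+e_2+e_3+e_4\rangle$ in $\mathbb{K}^4$ (Kruskal rank $4$ over any field). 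Let $I_1=\{1,2,3\}$ and $I_2=\{1,4,5\}$. Then $U_{I_1}\cap U_{I_2}=\langle e_1,e_2+e_3\rangle$ has dimension $2=d(\ell-1)$, yet $|I_1\cap I_2|=1$. So no argument that looks at only two members of $S$ can yield $|I_1\cap I_2|=\ell-1$. The hypothesis $|S|=r-\ell+1$ has to enter before your Step 4, where you currently use it for the first time beyond $|S|\ge 2$.

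\textbf{What does hold pairwise, and the missing step.} The true pairwise statement is weaker: $U_{I_1}\cap U_{I_2}=W$ for distinct $I_1,I_2\in S$. To see it, pick $a\in I_1\setminus I_2$. Then $U_a+U_{I_2}$ is a direct sum of $\ell+1\le k$ spaces, so $\dim(U_{I_1}+U_{I_2})\ge d(\ell+1)$ and $\dim(U_{I_1}\cap U_{I_2})\le d(\ell-1)$.

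The paper starts from this and adds a global counting step:
\begin{enumerate}[(i)]
\item Let $C$ be the set of indices lying in at least two members of $S$. Each $i\in C$ satisfies $U_i\subseteq W$.
\item Hence $|C|\le \ell-1$, since $\ell$ such spaces would give a $d\ell$-dimensional direct sum inside $W$.
\item Double counting incidences gives $N\ell\le |C|N+(r-|C|)$ with $N=r-\ell+1$.
\item This inequality is tight only if $|C|=\ell-1$, every member of $S$ contains $C$, and every index outside $C$ lies in exactly one member.
\end{enumerate}

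With that in place, your Steps 3--4 become unnecessary. Your Step 1 (the intersection formula $U_A\cap U_B=U_{A\cap B}$ when $|A\cup B|\le k$, and the resulting injectivity) is correct and a clean way to package the linear algebra, but it only applies once you already know the union is small.
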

\begin{proof}
	Let 
	$W:=\bigcap_{I\in S}U_I$ and $N:=|S|=r-\ell+1.$ By assumption, $\dim W=d(\ell-1).$ We first show that for any two distinct $I_1,I_2\in S$, it holds that
	\[
		U_{I_1}\cap U_{I_2}=W.
	\]
	Since $W\subseteq U_{I_1}\cap U_{I_2}$, it suffices to prove
	\[
		\dim(U_{I_1}\cap U_{I_2})\le d(\ell-1).
	\]
	Choose $a\in I_1\setminus I_2$. This is possible because
	$I_1\ne I_2$ and $|I_1|=|I_2|=\ell$. Since
	\[
		|I_2\cup\{a\}|=\ell+1\le k,
	\]
	the Kruskal-rank assumption implies that the subspaces indexed by
	$I_2\cup\{a\}$ are in direct sum. Hence $U_a\cap U_{I_2}=0$ and therefore $\dim(U_a+U_{I_2})=d(\ell+1).$ Since
	\[
		U_a+U_{I_2}\subseteq U_{I_1}+U_{I_2},
	\]
	we get
	\[
		\dim(U_{I_1}+U_{I_2})\ge d(\ell+1).
	\]
	Also, since $\ell\le k$, the subspaces indexed by $I_1$ and by $I_2$
	are each in direct sum, so $\dim U_{I_1}=\dim U_{I_2}=d\ell.$
	Thus,
	\begin{align*}
		\dim(U_{I_1}\cap U_{I_2})
		&=\dim U_{I_1}+\dim U_{I_2}-\dim(U_{I_1}+U_{I_2})\\
		&\le d\ell+d\ell-d(\ell+1)\\
		&=d(\ell-1).
	\end{align*}
	This proves the claim that $U_{I_1}\cap U_{I_2}=W.$
	
	Now define
	\[
		C:=\{i\in [r]: i \text{ belongs to at least two members of } S\}.
	\]
	We claim that $U_i\subseteq W$ for every $i\in C$. Indeed, if
	$i\in C$, then there are distinct $I_1,I_2\in S$ such that
	$i\in I_1\cap I_2$. Therefore
	\[
		U_i\subseteq U_{I_1}\cap U_{I_2}=W.
	\]
	Next we show that
	\[
		|C|\le \ell-1.
	\]
	If $|C|\ge \ell$, choose $C_0\subseteq C$ with $|C_0|=\ell$. Since $\ell\le k$, the subspaces $\{U_i:i\in C_0\}$ are in direct sum. Hence $\dim\left(\sum_{i\in C_0}U_i\right)=d\ell.$ But each $U_i$ with $i\in C_0$ is contained in $W$, so this
	$d\ell$-dimensional space is contained in $W$, contradicting $\dim W=d(\ell-1).$ Thus $|C|\le \ell-1.$
	
	We now count pairs $(i,I)$ with $i \in [r]$ and $i \in I \in  S$. Since each $I\in S$ has size $\ell$, the
	total number of such pairs is $N\ell.$ Every index in $C$ occurs in at most $N$ members of $ S$, while
	every index outside $C$ occurs in at most one member of $S$.
	Hence,
	\[
		N\ell\le |C|N+(r-|C|)=r+|C|(N-1).
	\]
	Using $|C|\le \ell-1$ and $N=r-\ell+1$, we obtain
	\[
		N\ell
		\le r+(\ell-1)(N-1)
		= r+(\ell-1)(r-\ell)
		= \ell(r-\ell+1)
		= N\ell.
	\]
	Hence equality holds throughout.
	
	In summary, $|C|=\ell-1,$ each index in $C$ belongs to all $N$ members of $ S$, and each
	index outside $C$ belongs to exactly one member of $ S$. Thus
	every member of $ S$ contains $C$. Since every member has size
	$\ell$, each member is of the form $C\cup\{i\}$ for a unique $i\in [r]\setminus C$. Because each index outside $C$
	appears exactly once, we get
	\[
		S=\{C\cup\{i\}:i\in [r]\setminus C\}.
	\]
	Taking $J=C$
	proves the claim.
\end{proof}

\begin{lemma}\label{thm:combinatorics_general}
	Let $\mathbb K$ be a field, and let $d\ge 1$ and $2\le k\le r$ be integers.
	Let $U_1,\ldots,U_r\subseteq \mathbb K^n$ and
	$U'_1,\ldots,U'_r\subseteq \mathbb K^n$ be two collections of
	$d$-dimensional subspaces, both of Kruskal rank at least $k$.
	For $I\subseteq [r]$, write $U_I=\langle U_i : i \in I \rangle$ and $U_I'=\langle U_i' : i \in I \rangle$. If
	\[
		\{U_I:|I|=k-1\}=\{U'_I:|I|=k-1\},
	\]
	then
	\[
		\{U_1,\ldots,U_r\}=\{U'_1,\ldots,U'_r\}.
	\]
\end{lemma}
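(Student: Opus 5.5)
We argue by downward induction on the size $m$ of the index sets, with $m$ running from $k-1$ down to $1$. The claim at level $m$ is
\[
	\{U_I:|I|=m\}=\{U'_I:|I|=m\}.
\]
The case $m=k-1$ is the hypothesis. The case $m=1$ gives $\{U_i\}=\{U'_i\}$ as sets. Since $k\ge 2$, any two of the $U_i$ are in direct sum, hence distinct, and the same holds for the $U'_i$. So both multisets are genuine $r$-element sets, and the conclusion follows. (If $k=2$ we are already at $m=1$ and there is nothing to do.)

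First we record the injectivity of $I\mapsto U_I$ on $m$-subsets for $1\le m\le k-1$. Let $I_1\neq I_2$ with $|I_1|=|I_2|=m$. The first step of the proof of \Cref{lem:combinatorics_general} uses only $m+1\le k$, and it gives $\dim(U_{I_1}\cap U_{I_2})\le d(m-1)<dm=\dim U_{I_1}$. Hence $U_{I_1}\ne U_{I_2}$. The same argument gives injectivity for the $U'$. It also gives the following: for a fixed $J$ with $|J|=m-1$, any two distinct spaces $U'_{J\cup\{i\}}$ intersect exactly in $U'_J$, since $U'_J$ is contained in both and has dimension $d(m-1)$. Therefore $\bigcap_{i\notin J}U'_{J\cup\{i\}}=U'_J$, which has dimension $d(m-1)$.

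For the inductive step, assume the level-$m$ claim holds with $2\le m\le k-1$, and fix $J\subseteq[r]$ with $|J|=m-1$. The $r-m+1$ spaces $U'_{J\cup\{i\}}$, $i\notin J$, are pairwise distinct. By the induction hypothesis each of them equals $U_{I}$ for some $m$-subset $I$, and injectivity makes these $I$ distinct. This gives a collection $S$ of $r-m+1$ distinct $m$-subsets with $\dim\bigcap_{I\in S}U_I=d(m-1)$. We apply \Cref{lem:combinatorics_general} with $\ell=m\in\{2,\ldots,k-1\}$. It yields $J'$ with $|J'|=m-1$ such that $S=\{J'\cup\{i\}:i\notin J'\}$. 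By the same intersection computation for the $U$, we get
\[
	U'_J=\bigcap_{I\in S}U_I=U_{J'}.
\]
Thus every $(m-1)$-span of the $U'$ is an $(m-1)$-span of the $U$. By symmetry the reverse inclusion also holds, and this proves the claim at level $m-1$.

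The only delicate point is matching the hypotheses of \Cref{lem:combinatorics_general}. We need the collection $S$ to consist of distinct subsets, and we need the intersection of the $U'$-spans to have exactly the dimension $d(m-1)$. Both follow from the pairwise-intersection estimate, which is valid because $m+1\le k$.
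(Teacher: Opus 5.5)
Your proof is correct and takes essentially the same route as the paper: a downward induction from level $k-1$ to level $1$, in which each $(m-1)$-span is recovered as a $d(m-1)$-dimensional intersection of $r-m+1$ distinct $m$-spans via \Cref{lem:combinatorics_general}. The paper packages this step as an intrinsic operator $\Phi_t$ with $\Phi_t(S_t)=S_{t-1}$, applied to both collections, whereas you argue by double inclusion. Your explicit checks of the distinctness of the $m$-subsets and of the multiset-versus-set issue at level $1$ are details the paper leaves implicit.
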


\begin{proof}
	For $1\le t\le k-1$, set
	\[
		S_t:=\{U_I:|I|=t\},\qquad
		S'_t:=\{U'_I:|I|=t\}.
	\]
	If $k=2$, then the assumption already says $S_1=S'_1$, so there is
	nothing to prove. Hence assume $k\ge 3$.  We prove that $ S_t$ determines $ S_{t-1}$ for
	every $2\le t\le k-1$. Define an operation $\Phi_t$ on collections of subspaces by declaring
	$\Phi_t(  A)$ to be the set of all subspaces that occur as
	intersections of $r-t+1$ elements of $  A$ and have dimension
	$d(t-1)$.
	
	We claim that $\Phi_t( S_t)= S_{t-1}.$ First, let $J\subseteq [r]$ with $|J|=t-1$. Consider the $r-t+1$
	subspaces
	\[
		U_{J\cup\{i\}},\qquad i\in [r]\setminus J.
	\]
	Their intersection is $U_J$. Indeed, they all contain $U_J$, and if
	$i\neq j$ are not in $J$, then
	\[
		U_{J\cup\{i\}}\cap U_{J\cup\{j\}}=U_J.
	\]
	This follows since the subspaces indexed by $J\cup\{i,j\}$ are in direct sum, because
	$|J|+2=t+1\le k$. Hence $U_J\in \Phi_t( S_t)$. This proves $S_{t-1}\subseteq \Phi_t( S_t).$
	
	Conversely, suppose $W\in \Phi_t( S_t)$. Then $W=\bigcap_{I\in  T}U_I$ for some collection $ T$ of $r-t+1$ subsets $I\subseteq [r]$
	with $|I|=t$, and $\dim W=d(t-1).$ By Lemma~\ref{lem:combinatorics_general} there exists a subset $J\subseteq [r]$ with $|J|=t-1$ such that
	\[
		T=\{J\cup\{i\}:i\in [r]\setminus J\}.
	\]
	Therefore
	\[
		W=\bigcap_{i\in [r]\setminus J}U_{J\cup\{i\}}=U_J,
	\]
	so $W\in  S_{t-1}$. This completes the proof that $\Phi_t( S_t)= S_{t-1}.$
	
	The same argument applies to the primed collection, because
	$U'_1,\ldots,U'_r$ are also $d$-dimensional and have Kruskal rank at
	least $k$. Hence $\Phi_t( S'_t)= S'_{t-1}$ for every $2\le t\le k-1$.
	
	By assumption, $S_{k-1}= S'_{k-1}.$ Applying $\Phi_{k-1}$ to both sides gives $S_{k-2}= S'_{k-2}.$
	Applying $\Phi_{k-2}$, then $\Phi_{k-3}$, and so on, we eventually get $S_1= S'_1.$
	But
	\[
		S_1=\{U_1,\ldots,U_r\},\qquad
		S'_1=\{U'_1,\ldots,U'_r\},
	\]
	so the proof is complete.
\end{proof}

We now use Lemma~\ref{thm:combinatorics_general} along with contraction varieties to give a more geometric proof of Kruskal's theorem.

\begin{theorem}[Kruskal's theorem]\label{thm:kruskal}
	Suppose that $T=\sum_{i=1}^r a_i\otimes b_i \otimes c_i \in \mathbb{K}^{n_1} \otimes \mathbb{K}^{n_2} \otimes \mathbb{K}^{n_3}$, where the vectors $\{a_1,\dots, a_r\}$ have Kruskal rank $k_a$, the vectors $\{b_1,\dots, b_r\}$ have Kruskal rank $k_b$, and the vectors  $\{c_1,\dots, c_r\}$ have Kruskal rank $k_c$. If $2r \leq k_a+k_b+k_c-2$, then $\text{rank}(T)=r$ and this is the unique tensor rank decomposition of $T$.
\end{theorem}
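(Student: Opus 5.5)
The plan is to compute the contraction variety $X_m^{(3)}(T)$ for the specific value
\[
m:=r-k_c+1
\]
and show that it is a union of linear subspaces determined by the $c_i$. Any competing decomposition must then reproduce the same subspace arrangement, and \Cref{thm:combinatorics_general} (with $d=1$) recovers the $c$-vectors. By the symmetry of the hypothesis, the same argument applied to the other two contractions recovers the $a$- and $b$-vectors. A final step matches them up into triples.

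\emph{Step 1: computing $X_m(T)$.} For $f\in\Kbar^{n_3}$ we have $T_f=\sum_i \langle c_i,f\rangle a_ib_i^T$. The hypothesis $2r\le k_a+k_b+k_c-2$ is exactly $r+m\le k_a+k_b-1$, so \Cref{prop:standard-kruskal} applies. It gives $\rk T_f\le m$ if and only if $f$ annihilates at least $r-m=k_c-1$ of the $c_i$; the converse direction is trivial. Writing $C_I:=\langle c_i:i\in I\rangle$, this yields
\[
X_m(T)=\bigcup_{|I|=k_c-1} C_I^{\perp}.
\]
Each $C_I$ has dimension $k_c-1$. The $C_I$ are pairwise distinct, since two distinct $(k_c-1)$-sets together contain $k_c$ independent vectors. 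So this is the irreducible decomposition into $\binom{r}{k_c-1}$ linear components, each of codimension $k_c-1$. The same reasoning with $m-1$ in place of $m$ (still admissible in \Cref{prop:standard-kruskal}) shows that $X_{m-1}(T)$ is a union of subspaces of codimension exactly $k_c$.

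\emph{Step 2: comparing with a second decomposition.} Let $T=\sum_{j=1}^{r'}a'_j\otimes b'_j\otimes c'_j$ with $r'\le r$. For any $J$ with $|J|=r'-m\le k_c-1$, each $f\in C_J'^{\perp}$ gives $\rk T_f\le m$. Hence the irreducible space $C_J'^{\perp}$ lies in a single component $C_I^{\perp}$. This forces $\dim C'_J\ge k_c-1$, so $r'-m\ge k_c-1$, i.e.\ $r'\ge r$, which establishes $\rk T=r$.

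Now take $r'=r$. Suppose some $k_c$ of the $c'_j$ spanned a space of dimension at most $k_c-1$. Then its orthogonal complement, of codimension at most $k_c-1$, would lie in $X_{m-1}(T)$. This contradicts the codimension-$k_c$ description from Step 1. Hence the $c'_j$ have Kruskal rank at least $k_c$. It follows that the $C'_J$ for $|J|=k_c-1$ are pairwise distinct and each equals some $C_I$. Counting (both families have $\binom{r}{k_c-1}$ distinct members) gives
\[
\{C_I\}=\{C'_J\}.
\]
\Cref{thm:combinatorics_general} with $d=1$ and $k=k_c$ then yields $\{\langle c_i\rangle\}=\{\langle c'_j\rangle\}$. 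Because the hypothesis is symmetric in the three factors, the same holds for the $a$'s and $b$'s.

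\emph{Step 3: the main obstacle.} I expect the hardest part to be pairing the lines into triples, i.e.\ finding a single permutation $\pi$ that works in all three factors. From Step 2, reindex so that $c'_i=\lambda_ic_i$. Then for all $f$,
\[
\sum_i\langle c_i,f\rangle\bigl(a_ib_i^T-\lambda_i a'_ib_i'^T\bigr)=0 .
\]
Since the $c_i$ may be dependent, I cannot simply read off the coefficients. My first attempt is to take $f$ generic in a component $C_I^{\perp}$. Then both $\sum_{i\notin I}\alpha_ia_ib_i^T$ and $\sum_{i\notin I}\alpha_i\lambda_ia'_ib_i'^T$ have rank $m$, and their column spaces agree:
\[
\langle a_i:i\notin I\rangle=\langle a'_i:i\notin I\rangle .
\]
For a fixed $i$, I would intersect these spans over the $(k_c-1)$-sets $I$ that omit $i$. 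Using the Kruskal rank of the $a$'s, this should isolate $\langle a_i\rangle=\langle a'_i\rangle$, and similarly for the $b$'s. This would give $a'_i\otimes b'_i\otimes c'_i=a_i\otimes b_i\otimes c_i$ for every $i$. The step to verify carefully is that these intersections really cut down to a line whenever $k_a<r$. If they do not, the fallback is to use the recovered $a$-lines and $b$-lines together with \Cref{prop:standard-kruskal} to force the rank-$\le 2$ differences $a_ib_i^T-\lambda_ia'_ib_i'^T$ to vanish.
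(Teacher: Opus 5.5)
Your Steps 1 and 2 follow the paper's proof essentially line by line. You use the same $m=r-k_c+1$, the same passage through $X_{m-1}(T)$ to get $k_{c'}\ge k_c$, and the same counting followed by Lemma~\ref{thm:combinatorics_general}. One case is missing there: if $r'<m$, no $J$ of size $r'-m$ exists. In that case every contraction has rank at most $r'<m$, so $X_m(T)=\Kbar^{n_3}$. This contradicts codimension $k_c-1\ge 1$, since $k_c\ge 2$ follows from the hypothesis.

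The real difference is the gluing step. The paper recovers the lines in all three factors separately and writes $T=\sum_i\alpha_i a_{\sigma(i)}\otimes b_{\tau(i)}\otimes c_i$. It proves $\sigma=\tau$ by contradiction, applying Proposition~\ref{prop:standard-kruskal} to a modified family of rank-one matrices. It then uses linear independence of the $a_i\otimes b_i$ to force $\sigma=\mathrm{id}$ and $\alpha_i=1$.

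You instead fix the indexing by the $c$-lines and read off the $a$- and $b$-lines from column and row spaces of contractions at generic points of the components $C_I^\perp$. Your first attempt works, so the fallback is not needed. Write $A_S=\langle a_j:j\in S\rangle$.
\begin{enumerate}
\item The hypothesis together with $k_b\le r$ gives $m\le k_a-1$, and likewise $m\le k_b-1$; also $k_c\ge 2$ gives $m\le r-1$.
\item Take $|S'|=m-1$ with $i\in S'$ and distinct $p,p'\notin S'$. The $m+1\le k_a$ vectors indexed by $S'\cup\{p,p'\}$ are independent, so $A_{S'\cup\{p\}}\cap A_{S'\cup\{p'\}}=A_{S'}$.
\item Induction on $m$ then gives $\bigcap_{S\ni i,\,|S|=m}A_S=\langle a_i\rangle$. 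Hence $\langle a'_i\rangle\subseteq\bigcap_S A'_S=\bigcap_S A_S=\langle a_i\rangle$.
\end{enumerate}
The equality $A'_S=A_S$ only needs $\dim A'_S\le m=\rk T_f$. So your route never actually uses the separate recovery of the $a$- and $b$-lines in Step 2.

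You do gloss over the scalars. Matching lines only gives $a'_i\otimes b'_i\otimes c'_i=\mu_i\,a_i\otimes b_i\otimes c_i$. To get $\mu_i=1$, substitute back into your identity at a generic $f\in C_I^\perp$ and use independence of $\{a_ib_i^T\}_{i\notin I}$.

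Your approach buys a shorter, purely linear-algebraic gluing that needs only the third-mode contraction variety and the matrices lying over it. The paper's approach is more symmetric and intrinsic, since each factor's lines come from its own contraction variety. The price is the separate permutation-matching argument.
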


\begin{proof}
	Let $m=r-k_c+1$, and note that $r+m \leq k_a+k_b-1$. By Proposition~\ref{prop:standard-kruskal}, the only linear combinations $\sum_{i=1}^r \alpha_i a_i b_i^T$ of rank $\leq m$ are those for which $\omega(\alpha)\leq m$. From here, it is clear that
	\begin{align}\label{eq:contraction_kruskal}
		X_{m}(T)=\bigcup_{i_1< \dots < i_{k_c-1}} \langle c_{i_1}, \dots, c_{i_{k_c-1}} \rangle^{\perp}.
	\end{align}
	So this is the decomposition of $X_m(T)$ into irreducible components. In particular, $\dim(X_m(T))=n_3-k_c+1$. 
	
	Let
	\[
	T=\sum_{i=1}^{r'} a'_i\otimes b'_i\otimes c'_i
	\]
	be a decomposition of $T$ into $r'\le r$ product vectors. We first prove that $r'=r$.  If $r'\le m$, then every contraction of $T$ has rank at
	most $m$, so $X_m(T)=\Kbar^{n_3}$, contradicting
	$\dim X_m(T)=n_3-r+m<n_3$, where the strict inequality follows from $k_c \geq 2$, which is an easy consequence of the assumption $2r \leq k_a+k_b+k_c-2$.  Thus $r'>m$. For every subset $J\subseteq [r']$ of cardinality $r'-m$, we have
	\[
	\langle c'_j:j\in J\rangle^\perp\subseteq X_m(T),
	\]
	because a form vanishing on these $r'-m$ vectors leaves at most $m$
	rank-one summands in the contraction.  Hence
	\[
	\dim X_m(T)\ge n_3-(r'-m)=n_3-r'+m.
	\]
	But $\dim X_m(T)=n_3-r+m$, so $r'\ge r$.  Since $r'\le r$, we conclude
	that $r'=r$.
	
	Arguing similarly to before, it is clear that
	\begin{align}\label{eq:Xm1}
	X_{m-1}(T)
	=
	\bigcup_{\substack{I\subseteq [r]\\ |I|=k_c}}
	\langle c_i:i\in I\rangle^\perp.
	\end{align}
	We now use this to prove that $k'_c\ge k_c$. Suppose toward contradiction that $k'_c<k_c$. Then there exists a
	subset $J\subseteq [r]$ with $|J|=k_c$ such that the set $\{c'_j:j\in J\}$ is linearly dependent. Hence,
	\[
	\dim \langle c'_j:j\in J\rangle\le k_c-1,
	\]
	so
	\[
	\dim \langle c'_j:j\in J\rangle^\perp\ge n_3-k_c+1.
	\]
	But every $f\in \langle c'_j:j\in J\rangle^\perp$ annihilates the $k_c$
	summands indexed by $J$ in the primed decomposition. Therefore $T_f$ is
	a sum of at most
	$
	r-k_c=m-1
	$
	rank-one matrices, so
	\[
	\langle c'_j:j\in J\rangle^\perp\subseteq X_{m-1}(T).
	\]
	This is impossible, because $X_{m-1}(T)\subseteq \Kbar^{n_3}$ is a finite union of linear
	subspaces of dimension $n_3-k_c$, whereas
	$\langle c'_j:j\in J\rangle^\perp$ is a linear subspace of dimension at
	least $n_3-k_c+1$, and $\Kbar$ is infinite. Therefore $k'_c\ge k_c$.
	
	It is clear that
	\begin{align}\label{eq:xtprime}
		X_m(T) \supseteq \bigcup_{i_1< \dots < i_{k_c-1}} \langle c'_{i_1}, \dots, c'_{i_{k_c-1}} \rangle^{\perp}.
	\end{align}
	Since $k_c' \geq k_c$, the spaces $\langle c'_{i_1}, \dots, c'_{i_{k_c-1}} \rangle$ are all distinct. Since the irreducible components appearing in the righthand side of~\eqref{eq:xtprime} have the same dimension and number as the irreducible components of $X_m(T)$, it follows that~\eqref{eq:xtprime} holds with equality.  By Lemma~\ref{thm:combinatorics_general} (applied using the field $\Kbar$) it holds that
	\begin{align*}
		\{\langle c_1 \rangle ,\dots, \langle c_r \rangle\} = \{\langle c_1' \rangle ,\dots, \langle c_r' \rangle\}.
	\end{align*}

	An analogous argument shows that the one-dimensional spaces $\langle a_i \rangle$ and $\langle b_i \rangle$ are uniquely determined. Hence, any rank $\leq r$ decomposition of $T$ must be of the form $T=\sum_{i=1}^r \alpha_i a_{\sigma(i)} \otimes b_{\tau(i)} \otimes c_{i}$ for some non-zero scalars $\alpha_i$ and permutations $\sigma, \tau \in \mathfrak{S}_r$.

	Fix any index $i'\in [r]$. We now verify that $\sigma=\tau$. If $m=1$ then we can let $f \in \Kbar^{n_3}$ be such that $\langle f,c_j\rangle =0$ for all $j \neq i'$ and $\langle f,c_{i'}\rangle \neq 0$. Then $\langle f, c_{i'} \rangle a_{i'} b_{i'}^T = \langle f, c_{i'}  \rangle \alpha_{i'} a_{\sigma(i')} b_{\tau(i')}^T$, so $\sigma=\tau=id$ since $k_a, k_b \geq 2$. Suppose now that $m > 1$, and suppose toward contradiction that $\sigma(i')\neq \tau(i')$ for some $i' \in [r]$. Let $S \subseteq[r]$ be a subset of size $m$ for which $i'\in S$ and $\sigma(i') \in S$. Let $f \in \Kbar^{n_3}$ be such that $\langle f,c_j\rangle =0$ for all $j \notin S$ and $\langle f,c_j\rangle \neq 0$ for all $j \in S$. Then
	\begin{align*}
		T_f = \sum_{i \in S} \langle f,c_i\rangle a_i b_i^T = \sum_{i \in S} \langle f, c_i\rangle \alpha_i a_{\sigma(i)} b_{\tau(i)}^T.
	\end{align*}
	Hence,
	\begin{align}\label{eq:sigma}
		a_{\sigma(i')} (\langle f,c_{\sigma(i')}\rangle b_{\sigma(i')}^T-\langle f,c_{i'} \rangle \alpha_{i'} b_{\tau(i')}^T)+&\sum_{i \in S \setminus \{\sigma(i')\}} \langle f, c_i\rangle a_i b_i^T\\
		& = \sum_{i \in S \setminus \{i'\}} \langle f, c_i\rangle  \alpha_i a_{\sigma(i)} b_{\tau(i)}^T.\nonumber
	\end{align}
	Note that the column vectors appearing in the lefthand side have Kruskal rank at least $\min\{m,k_a\}$, and the row vectors have Kruskal rank at least $\min\{m,k_b-1\}$. It is easily verified that
	\begin{align*}
		2m-1 \leq \min\{m,k_a\}+\min\{m,k_b-1\}-1.
	\end{align*}
	By Proposition~\ref{prop:standard-kruskal}, the only rank $\leq m-1$ elements of the span of the rank-one matrices appearing on the lefthand side of~\eqref{eq:sigma} are linear combinations of $\leq m-1$ of these matrices. This contradicts~\eqref{eq:sigma}. Hence, $\sigma=\tau$.

	By Proposition~\ref{prop:standard-kruskal}, the vectors $\{a_i \otimes b_i : i\in [r]\}$ are linearly independent. Since $\sum_{i=1}^r a_i \otimes b_i \otimes (c_i-\alpha_{\sigma^{-1}(i)} c_{\sigma^{-1}(i)}) = 0$, it follows that $c_i-\alpha_{\sigma^{-1}(i)} c_{\sigma^{-1}(i)}=0$. Since $k_c\geq 2$, this implies $\sigma=id$ and $\alpha_i=1$, completing the proof.
\end{proof}

\begin{remark}
	The works~\cite{Lovitz_Petrov_2023} and~\cite{gubkin2024unique} prove generalizations of Kruskal's theorem.  It is unclear to the authors how to extend the above geometric proof of Kruskal's theorem to give alternate proofs of either of these generalizations.
	For example, let us consider the generalization proven in~\cite{gubkin2024unique}:
	
	\begin{theorem}\label{thm:gubkin}
		Let $T=\sum_{i=1}^r a_i \otimes b_i \otimes c_i$. Then uniqueness holds if $k_c \geq 2$ and for any $\alpha \in \mathbb K^r$, it holds that
		\begin{equation}\label{eq:rank-condition-c}
			\text{{rank}}\Big(\sum_{i=1}^r \alpha_i\, a_i b_i^T\Big)\ \ge\ \min\{\omega(\alpha),\, \tilde{m}+1\},
		\end{equation}
		where $\tilde{m}=r-d_c+1$, $d_c:=\dim\langle c_1,\dots, c_r\rangle$, and $\omega(\alpha)$ denotes the number of nonzero entries in $\alpha$.
	\end{theorem}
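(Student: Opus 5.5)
The plan is to run the contraction-variety argument from the proof of Theorem~\ref{thm:kruskal} with $m:=\tilde m=r-d_c+1$. Write $C:=\langle c_1,\dots,c_r\rangle$, so $\dim C=d_c$.

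First I compute $X_{\tilde m}(T)$. Condition~\eqref{eq:rank-condition-c} says exactly that $\rk(\sum_i\alpha_i a_ib_i^T)\le \tilde m$ forces $\omega(\alpha)\le\tilde m$; it plays the role that Proposition~\ref{prop:standard-kruskal} played before. Hence $f\in X_{\tilde m}(T)$ if and only if $f$ annihilates at least $r-\tilde m=d_c-1$ of the $c_i$. In matroid language this gives
\[
X_{\tilde m}(T)=\bigcup_{H}H^{\perp},
\]
where $H$ runs over the hyperplanes of the matroid of $\{c_i\}$ inside $C$. These are the flats of rank $d_c-1$; each contains at least $d_c-1$ of the $c_i$, and every set of $d_c-1$ of the $c_i$ lies in some such $H$ (extend to a spanning set). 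Each $H^\perp$ is an irreducible component of dimension $n_3-d_c+1$, and $X_{\tilde m}(T)$ is invariant under translation by $C^\perp$. The condition with $\omega(\alpha)\ge1$ also shows that the $a_i\otimes b_i$ are linearly independent. So the image of the flattening $T:(\Kbar^{n_3})^*\to\Kbar^{n_1\times n_2}$ has dimension $r$, its kernel is exactly $C^\perp$, and $C$ is determined by $T$.

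Next take another decomposition $T=\sum_{i=1}^{r'}a_i'\otimes b_i'\otimes c_i'$ with $r'\le r$. As in the proof of Theorem~\ref{thm:kruskal}, $\langle c'_j:j\in J\rangle^\perp\subseteq X_{\tilde m}(T)$ for every $|J|=r'-\tilde m$. If $r'<r$, this subspace has dimension $\ge n_3-r'+\tilde m>n_3-d_c+1$, contradicting the dimension of $X_{\tilde m}(T)$. Hence $r'=r$.

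The kernel of the flattening contains $\langle c'_i\rangle^\perp$ and equals $C^\perp$, so $\langle c_i'\rangle\supseteq C$. A dimension count on the image (which has dimension $r$ and is spanned by $r$ matrices $a_i'b_i'^T$) shows these matrices are independent. Using this, I would show $\langle c'_i\rangle=C$, possibly after replacing the $c'_i$ by their projections: the part of each $c_i'$ in a complement of $C$ must cancel, because the $a_i'\otimes b_i'$ are independent. Then every irreducible component of $X_{\tilde m}(T)$ has the form $H'^\perp$ for a hyperplane $H'$ of the $c'$-matroid. Comparing the two unions of equidimensional subspaces, and using uniqueness of the decomposition into irreducible components, gives that the two matroids have the same family of hyperplanes as subspaces of $C$.

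Every flat is an intersection of hyperplanes, so the rank-one flats, i.e.\ the lines $\langle c_i\rangle$, are determined. Here $k_c\ge2$ guarantees these are $r$ distinct lines, one for each index, and likewise for the primed side. Thus $\{\langle c_i\rangle\}=\{\langle c_i'\rangle\}$ after the analogous argument for $c'$, whose k-rank must also be $\ge2$: otherwise two primed terms could be merged, giving length $<r$, which we have excluded.

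With the lines $\langle c_i\rangle$ fixed, I relabel so that $c_i'=\lambda_ic_i$. Then $\sum_i a_i\otimes b_i\otimes c_i=\sum_i\lambda_i a_i'\otimes b_i'\otimes c_i$. To separate the terms I choose $f$ in $H^\perp$ for a hyperplane $H$ through all but the $c_j$ in a fixed circuit-complement, and then intersect the resulting rank-$\le\tilde m$ slices. The goal is to show $a_ib_i^T=\lambda_ia_i'b_i'^T$ for each $i$; the linear independence of $\{a_i\otimes b_i\}$ combined with~\eqref{eq:rank-condition-c} applied to differences should force this, as in the last step of Theorem~\ref{thm:kruskal}.

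I expect this last step to be the main obstacle. When $k_c$ is small, several $c_i$ can share a hyperplane, so a single slice does not isolate one term. I would try an induction on matroid flats, peeling off the terms indexed by a flat and applying~\eqref{eq:rank-condition-c} to the remaining pencil, rather than the permutation trick used for Theorem~\ref{thm:kruskal}.
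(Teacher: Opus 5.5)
\textbf{There is a genuine gap, and it is the one the paper itself identifies.} The paper does not prove this statement. It quotes it from \cite{gubkin2024unique} inside a remark whose purpose is to explain why the contraction-variety proof of \Cref{thm:kruskal} does \emph{not} extend to it.

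Your description $X_{\tilde m}(T)=\bigcup_H H^\perp$ is correct, and so is your argument that $r'=r$. The step that fails is ``every flat is an intersection of hyperplanes, so the lines $\langle c_i\rangle$ are determined.'' That matroid fact concerns hyperplanes as \emph{sets of elements}. From $X_{\tilde m}(T)$ you only learn the hyperplanes as \emph{subspaces}, and an intersection of such subspaces can be a line containing no $c_i$.

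The paper's example shows this. Take $c_2=(0,1,-1)$, $c_3=(0,1,0)$, $c_4=(1,-1,-1)$, $c_5=(1,-1,0)$, together with either $c_1=(0,0,1)$ or $c_1'=(1,0,-1)$. Both configurations have $d_c=3$ and $k_c=2$. Both have the same six hyperplanes, namely the planes spanned by pairs of $c_2,\dots,c_5$. Here $\langle c_1\rangle=\langle c_2,c_3\rangle\cap\langle c_4,c_5\rangle$ and $\langle c_1'\rangle=\langle c_2,c_5\rangle\cap\langle c_3,c_4\rangle$. So the same family of hyperplane-subspaces is compatible with two different sets of lines.

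In the Kruskal setting this ambiguity is removed by \Cref{lem:combinatorics_general}. Its counting argument says that $r-\ell+1$ of the $\ell$-spans whose intersection has dimension $d(\ell-1)$ must form a star around an $(\ell-1)$-set. That argument uses the Kruskal-rank bound essentially and has no analogue for an arbitrary matroid. To close the gap you would have to use more of $T$ than the set $X_{\tilde m}(T)$. One example is the generic rank of $T_f$ on each component $H^\perp$, which by \eqref{eq:rank-condition-c} equals the number of $c_i$ outside $H$. You would then still need a reconstruction theorem for general matroids from that data. The proposal contains neither.

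\textbf{Several intermediate steps are also wrong as written.}

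First, the image of $f\mapsto T_f$ has dimension $d_c$, not $r$. It is the image of the rank-$d_c$ map $f\mapsto(\langle f,c_i\rangle)_i$ under an injective map. So your dimension count does not show that the $a_i'b_i'^T$ are independent, and the conclusion $\langle c_i'\rangle=C$ is not justified. You would need to first project the $c_i'$ onto $C$ and recover the discarded parts only at the very end.

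Second, the argument for $k_{c'}\ge 2$ fails. If $c_1'\parallel c_2'$, the two terms combine to $(a_1'b_1'^T+\lambda a_2'b_2'^T)\otimes c_1'$. This is a rank-$\le 2$ matrix tensored with a vector, not a product term, so no shorter decomposition arises.

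Third, the separation step is only a plan. The hypothesis is one-sided, so you cannot recover the lines $\langle a_i\rangle$ and $\langle b_i\rangle$ ``analogously'' as in \Cref{thm:kruskal}. Also, since $d_c<r$ is allowed, the $c_i$ can be dependent. Then $\sum_i(a_i\otimes b_i-\lambda_i a_i'\otimes b_i')\otimes c_i=0$ does not force the individual terms to vanish.
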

	
	We briefly describe the difficulty in proving this theorem using our contraction variety approach. Under these assumptions, it is easy to see that
	\begin{align*}
		X_{\tilde{m}}(T)=\bigcup_{i_1<\dots<i_{d_c-1}} \langle c_{i_1},\dots, c_{i_{d_c-1}} \rangle^{\perp}.
	\end{align*}
	Recall that a similar observation~\eqref{eq:contraction_kruskal} appears in our proof of Kruskal's theorem, with $d_c$ replaced by $k_c$. In that case, we were able to prove that this union of subspaces uniquely determines the one-dimensional spaces $\langle c_i \rangle$. Unfortunately, this no longer holds when $k_c$ is replaced by $d_c$: Consider the vectors (over $\mathbb C$, for example)
	\begin{align*}
		c_1&=(0,0,1), \qquad c'_1=(1,0,-1)\\
		c_2&=c_2'=(0,1,-1)\\
		c_3&=c_3'=(0,1,0)\\
		c_4&=c_4'=(1,-1,-1)\\
		c_5&=c_5'=(1,-1,0).
	\end{align*}
	Then $d_c=d_{c'}=3$, $k_c=k_{c'}=2$, and
	\begin{align*}
		\{\langle c_i, c_j \rangle : 1\leq i<j\leq 5\}=\{\langle c'_i, c'_j \rangle : 1\leq i<j\leq 5\}.
	\end{align*}
	Indeed, this follows from
	\begin{align*}
		\langle c_1, c_2 \rangle &= \langle c_1, c_3 \rangle=\langle c_2, c_3 \rangle \\
		\langle c_1, c_4 \rangle &= \langle c_1, c_5 \rangle = \langle c_4, c_5 \rangle\\
		\langle c_1', c_2 \rangle &= \langle c_1', c_5 \rangle = \langle c_2, c_5 \rangle \\
		\langle c_1',c_3 \rangle &= \langle c_1', c_4\rangle = \langle c_3, c_4\rangle.
	\end{align*}
	However, $\{\langle c_1 \rangle, \dots, \langle c_5 \rangle \} \neq \{\langle c'_1 \rangle,\dots, \langle c'_5\rangle \}$.
\end{remark}

	\section{Border rank=rank for Kruskal tensors}\label{sec:border-rank=rank}
In this section, we prove the following theorem. 
\begin{theorem}\label{thm:border-rank=rank}
	Let $ T\in \K^{n_1\times n_2\times n_3} $ be a Kruskal tensor of rank $ r $. Then, the border rank of $ T $ is also $ r $. 
\end{theorem}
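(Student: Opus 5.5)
Suppose toward contradiction that $T$ has border rank $s<r$, so $T\in\sigma_{s}(X)$ for the Segre cone $X$. The plan is to show that every tensor in $\sigma_{r-1}(X)$ has a contraction variety $X_m$ that is too large. Concretely, set $m=r-k_c+1$, exactly as in the proof of \Cref{thm:kruskal}. Since $2r\le k_a+k_b+k_c-2$ gives $r+m\le k_a+k_b-1$, Proposition~\ref{prop:standard-kruskal} and \eqref{eq:contraction_kruskal} give $\dim X_m(T)=n_3-k_c+1=n_3-r+m$. (For the strengthened version with $-1$ instead of $-2$ we would use the same $m$ and the weaker inequality $r+m\le k_a+k_b$, checking that Proposition~\ref{prop:standard-kruskal} still identifies $X_m(T)$ up to dimension; but for the statement at hand $-2$ suffices.)

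The key semicontinuity step is as follows. Consider the incidence variety
\[
Z=\{(S,f)\in \sigma_{r-1}(X)\times \Kbar^{n_3} : \rk S_f\le m\},
\]
which is closed. For $S$ in the dense subset of tensors with rank $\le r-1$, the argument in the proof of \Cref{thm:kruskal} shows that $X_m(S)$ contains a linear subspace $\langle c'_j:j\in J\rangle^\perp$ with $|J|=r-1-m$. This subspace has dimension at least $n_3-r+1+m$, which exceeds $\dim X_m(T)$ by one. (If $r-1\le m$, the contraction variety is all of $\Kbar^{n_3}$.)

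To pass to the limit cleanly, I would use the Grassmannian. Let
\[
Y=\{(S,L): L\in\grass(n_3-r+1+m,n_3),\ L\subseteq X_m(S)\}.
\]
This is closed in $\sigma_{r-1}(X)\times\grass$, and since the Grassmannian is projective, its projection to $\sigma_{r-1}(X)$ is closed. The projection contains all tensors of rank $\le r-1$, hence all of $\sigma_{r-1}(X)$, and in particular $T$. We work over $\Kbar$, where the border rank notion is unaffected, since the defining equations of $\sigma_s(X)$ are defined over $\K$. So $X_m(T)$ contains a linear space of dimension $n_3-r+m+1$. This contradicts $\dim X_m(T)=n_3-r+m$, since $X_m(T)$ is a finite union of subspaces of that dimension and $\Kbar$ is infinite.

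The main obstacle is making the limiting statement rigorous: upper semicontinuity of the dimension of $X_m(S)$ alone goes the wrong way. The Grassmannian properness trick is what I would rely on to handle this. A second point to check is that the case $k_c\ge 2$ guarantees $m\le r-1$, so that the rank-$(r-1)$ tensors genuinely force the larger linear subspace.
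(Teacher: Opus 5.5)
Your proof is correct, but it takes a different route from the paper's.

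\textbf{The paper's route.} The paper proves the stronger \Cref{thm:Kruskal_border_rank_precise} with $m=r-k_c$. It computes $\dim X_m(S)=n_3-s+m$ only for a generic $S\in\sigma_s$. This uses \Cref{lem:contraction-variety-dimension} applied to generic decompositions, together with the flattening bound \Cref{lem:flattening-bound} to guarantee $s\ge m+1$. It then passes to $T$ by upper semicontinuity of fiber dimension for the proper projection $\P(\sigma_s)\times\P(\Kbar^{n_3})\to\P(\sigma_s)$.

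\textbf{Your route.} You use the elementary containment $\langle c'_j:j\in J\rangle^\perp\subseteq X_m(S)$. It holds for every tensor of rank at most $r-1$, so you need no genericity and no flattening bound. You then transport it to $T$ through properness of the Grassmannian. This is somewhat more self-contained. It also gives a stronger conclusion: $X_m(T)$ contains a linear space of the forbidden dimension, not merely a component of too large dimension.

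\textbf{Your reason for avoiding semicontinuity is mistaken.} Upper semicontinuity says the locus $\{S:\dim X_m(S)\ge d\}$ is closed in $\sigma_s$, which is exactly the direction needed. A dimension bound alone already contradicts $\dim X_m(T)=n_3-r+m$. The two arguments are really instances of the same properness principle for a projection from an incidence variety.

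\textbf{Two side remarks.}
\begin{enumerate}
\item Your parenthetical plan for the $-1$ version would fail with $m=r-k_c+1$. In that case $r+m=k_a+k_b$, so \Cref{prop:standard-kruskal} no longer applies. Your argument does go through verbatim with the paper's choice $m=r-k_c$. Then $r+m\le k_a+k_b-1$, the components of $X_m(T)$ are the spaces $\langle c_i:i\in I\rangle^\perp$ with $|I|=k_c$, of dimension $n_3-k_c$. Meanwhile every tensor of rank at most $r-1$ forces a linear space of dimension $n_3-k_c+1$.
\item For the reduction to $\Kbar$ you only need that border rank over $\Kbar$ is a lower bound for border rank over $\K$, as the paper states. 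You do not need that the two notions agree, and in general they do not, e.g.\ over finite fields.
\end{enumerate}
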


\noindent
We begin with a small lemma. 
\begin{lemma} \label{lem:contraction-variety-dimension}
	Let $ T  \in \K^{n_1\times n_2\times n_3}$ be a tensor with decomposition $T= \sum_{i = 1}^{r} a_i\otimes b_i\otimes c_i $ and let $ m\le k_a+k_b-(r+1) $ be bounded as in \Cref{prop:standard-kruskal}. Assume that  $r-m\le k_c$. Then, the dimension of $ X_{m}^{(3)}(T) $ equals $ n_3-\max\{r-m, 0\} $. Furthermore, $ X_{m}^{(3)}(T) $ is an arrangement of equidimensional subspaces. 
\end{lemma}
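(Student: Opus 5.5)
We will compute $X_m^{(3)}(T)$ explicitly as a union of linear subspaces, exactly as in the proof of \Cref{thm:kruskal}. For $f\in\Kbar^{n_3}$ write $\alpha_i:=\langle f,c_i\rangle$, so that $T_f^{(3)}=\sum_{i=1}^r \alpha_i a_i b_i^T$. Since $r+m\le k_a+k_b-1$, \Cref{prop:standard-kruskal} says that $\rk T_f\le m$ holds if and only if $\omega(\alpha)\le m$. The implication ``$\Leftarrow$'' is trivial, since then $T_f$ is a sum of at most $m$ rank-one matrices. Hence
\begin{align*}
	X_m^{(3)}(T)=\{f : \langle f,c_i\rangle=0 \text{ for at least } r-m \text{ indices } i\}=\bigcup_{\substack{J\subseteq[r]\\ |J|=\max\{r-m,0\}}}\langle c_j : j\in J\rangle^{\perp}.
\end{align*}
If $r-m\le 0$, the index set contains only $J=\emptyset$, and $X_m^{(3)}(T)=\Kbar^{n_3}$ has dimension $n_3$. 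This case is immediate.

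Now assume $r-m\ge1$. The hypothesis $r-m\le k_c$ means that every set $\{c_j : j\in J\}$ with $|J|=r-m$ is linearly independent, since any subset of size at most $k_c$ is independent. Therefore each $\langle c_j : j\in J\rangle^\perp$ is a linear subspace of $\Kbar^{n_3}$ of dimension exactly $n_3-(r-m)$. So $X_m^{(3)}(T)$ is a finite union of linear subspaces, all of the same dimension $n_3-(r-m)$. Its dimension is the maximum of the dimensions of these pieces, which is $n_3-\max\{r-m,0\}$. This also shows that $X_m^{(3)}(T)$ is an arrangement of equidimensional subspaces.

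There are a few minor points to check. First, $m\ge0$, which is needed to apply \Cref{prop:standard-kruskal}; since $m$ is given as a non-negative integer, this holds. Second, the subspaces may coincide for different $J$ when $r-m<k_c$ fails to separate them. This does not matter for the dimension or equidimensionality claims, since duplicates merely collapse in the union. The only substantive step is the characterization via \Cref{prop:standard-kruskal}, and it is already available, so I expect no real obstacle.
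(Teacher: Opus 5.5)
Your proposal is correct and follows the paper's proof essentially verbatim. Like the paper, you use \Cref{prop:standard-kruskal} to identify $X_m^{(3)}(T)$ with the union of the spaces $\langle c_j : j\in J\rangle^{\perp}$ over $(r-m)$-subsets $J\subseteq[r]$, and then use $r-m\le k_c$ to conclude that each of these subspaces has codimension exactly $r-m$; your explicit treatment of the case $r\le m$ and your remark that coinciding components (possible only when $r-m=k_c$) do not affect the dimension are harmless additions.
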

\begin{proof}
	Write $ r = \rk (T) $. If $ r < m $ then trivially $ X_{m}^{(3)}(T) = \Kbar^{n_3} $. Otherwise, by \Cref{prop:standard-kruskal}, the matrix space $ \mathcal{L} = \langle a_ib_i^{T} \mid i = 1,\ldots,r \rangle $ intersects the variety of matrices of rank $ \le m $ precisely in those matrices that are $ m $-sparse linear combinations of $ \{a_ib_i^{T} \mid i = 1,\ldots,r \} $. However, a contraction $ T_f = \sum_{i = 1}^{r} \langle f, c_i \rangle a_i b_{i}^{T} $ of $ T $ is an $ m $-sparse combination, if and only if $ \langle f, c_i \rangle = 0 $ for at least $ r-m $ values of $ i\in \{1,\ldots,r\} $. 
	From that, one readily observes that the contraction variety of $ T $ is a union of subspaces given by 
	\begin{align}\label{eq:kruskal-contraction-variety}
		X_m(T) = \bigcup_{I\subseteq [r], |I| = r-m} \langle c_i \mid i\in I \rangle^{\perp}.
	\end{align}
	The codimension of each irreducible component is thus at most $ r-m $, with equality as long as $ r-m\le k_c $, and thus the dimension of $ X_m(T) $ equals $ n_3-r+m $.
\end{proof}

Another small lemma is the flattening bound for Kruskal tensors. This is well known, but needed in the proof of \Cref{thm:Kruskal_border_rank_precise}. 
\begin{lemma}\label{lem:flattening-bound}
	Let $ T = \sum_{i = 1}^{r} a_i \otimes b_i\otimes c_i $ have k-ranks $ k_a, k_b, k_c $ and assume that $ 2r\le k_a+k_b+k_c-1 $. Then, the border rank $ s $ of $ T $ is at least $ \max\{k_a, k_b, k_c\} $.
\end{lemma}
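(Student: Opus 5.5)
By symmetry of the hypothesis in the three factors, it suffices to show that the border rank $s$ of $T$ is at least $k_c$. The plan is the standard flattening argument: border rank is bounded below by the rank of any flattening, because the condition $\rk \le s$ on a matrix is Zariski closed. Indeed, if $T\in\sigma_s(X)$, then every flattening of $T$ lies in the closure of the set of matrices of rank $\le s$. That set is closed, being cut out by the $(s+1)$-minors, and flattening is a linear map. Hence, writing $T^{(3)}:(\Kbar^{n_1}\otimes\Kbar^{n_2})^*\to \Kbar^{n_3}$ for the flattening $\sum_i (a_i\otimes b_i)\, c_i^{T}$, viewed as an $n_3\times n_1n_2$ matrix, we get $s\ge \rk T^{(3)}$.

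It then remains to show $\rk T^{(3)}\ge k_c$. Since $T^{(3)}=\sum_{i=1}^r c_i\,(a_i\otimes b_i)^T$, it is enough to know that the vectors $a_i\otimes b_i$, $i\in[r]$, are linearly independent. In that case the column space of $T^{(3)}$ is exactly $\langle c_1,\dots,c_r\rangle$, which has dimension at least $k_c$. For the linear independence I will invoke \Cref{prop:standard-kruskal} with $m=0$. A vanishing combination $\sum_i\alpha_i a_ib_i^T=0$ has rank $\le 0$, so the proposition forces $\omega(\alpha)=0$, provided $r\le k_a+k_b-1$.

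The main point to check is this inequality $r\le k_a+k_b-1$ under the assumption $2r\le k_a+k_b+k_c-1$. Since $k_c\le r$, we get $2r\le k_a+k_b+r-1$, which gives $r\le k_a+k_b-1$ as needed. The same computation with the roles of the three factors permuted, using $k_a\le r$ respectively $k_b\le r$, gives $s\ge k_a$ and $s\ge k_b$, so $s\ge\max\{k_a,k_b,k_c\}$.

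The only subtlety I expect is notational. Border rank is defined over $\K$ via the Zariski closure, while the minors argument works over any field. The rank of $T^{(3)}$ does not change under field extension to $\Kbar$, so the lower bound $s\ge\rk T^{(3)}\ge k_c$ holds over $\K$ as well.
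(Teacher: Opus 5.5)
Your proposal is correct and follows essentially the same route as the paper. The paper flattens along the first factor, invokes \Cref{prop:standard-kruskal} to get linear independence of the $b_ic_i^T$ (hence $s\ge k_a$), and concludes by symmetry; you flatten along the third factor and also explicitly check the needed hypothesis $r\le k_a+k_b-1$ via $k_c\le r$, a step the paper leaves implicit.
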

\begin{proof}
	By flattening, we can view $ T = \sum_{i = 1}^{r} a_i \otimes b_i\otimes c_i $ as a matrix $ \sum_{i = 1}^{r}  a_i (b_i\otimes c_i)^{T} $, whose rank gives a lower bound on $ s $. It follows from \Cref{prop:standard-kruskal} that the matrices $ (b_ic_i^{T})_{i=1,\ldots,r} $ are linearly independent. Therefore, $ s\ge k_a $. By symmetry, $ s\ge  \max\{k_a, k_b, k_c\}. $
\end{proof}

\noindent
Let us now prove \Cref{thm:border-rank=rank}. More precisely, we are proving a just slightly stronger theorem, where Kruskal's condition is weakened by one. 
\begin{theorem}\label{thm:Kruskal_border_rank_precise}
	Let $ T\in \K^{n_1\times n_2\times n_3} $ be a tensor with a decomposition $ T = \sum_{i = 1}^{r} a_i\otimes b_i\otimes c_i $ such that $ 2r\le k_a + k_b + k_c - 1 $. Then, the border rank of $ T $ is $ r $. 
\end{theorem}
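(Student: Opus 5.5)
Suppose, toward a contradiction, that the border rank $s$ of $T$ satisfies $s\le r-1$. Then $T$ is a Zariski limit of tensors $T_\varepsilon=\sum_{j=1}^{s}a_j(\varepsilon)\otimes b_j(\varepsilon)\otimes c_j(\varepsilon)$; over $\Kbar$ we may take a curve parametrized by $\varepsilon$, and after extending scalars nothing in the hypotheses changes. The plan is to compare contraction varieties in the third factor. Set $m:=r-k_c+1$. The hypothesis gives $r+m\le k_a+k_b$, which is one weaker than what \Cref{prop:standard-kruskal} needs at level $m$. It is, however, exactly what is needed at level $m-1$, since $r+(m-1)\le k_a+k_b-1$. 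So I would work with $X_{m-1}(T)$, where $m-1=r-k_c\ge 0$. By \Cref{lem:contraction-variety-dimension} (its hypothesis $r-(m-1)=k_c\le k_c$ holds), $X_{m-1}(T)$ is a union of subspaces $\langle c_i:i\in I\rangle^\perp$ with $|I|=k_c$. It therefore has dimension exactly $n_3-k_c$.

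The key point is upper semicontinuity in the reverse direction. For each $\varepsilon\ne0$, any $f$ orthogonal to $s-(m-1)$ of the vectors $c_j(\varepsilon)$ gives $\rk (T_\varepsilon)_f\le m-1$. Hence $X_{m-1}(T_\varepsilon)$ contains a linear subspace $L_\varepsilon$ of dimension at least $n_3-s+m-1$. If $s\le m-1$, the whole space is contained; in that case $X_{m-1}(T)=\Kbar^{n_3}$, contradicting $k_c\ge 1$. Otherwise we pass to a limit $L_0$ of $L_\varepsilon$ in the appropriate Grassmannian, which is projective and hence compact in the Zariski sense. Since the condition $\rk\le m-1$ is closed and $T_\varepsilon\to T$, we get $L_0\subseteq X_{m-1}(T)$. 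This yields $\dim X_{m-1}(T)\ge n_3-s+m-1$. Comparing with $n_3-k_c=n_3-r+m-1$ gives $s\ge r$, a contradiction. It would be cleaner to phrase this with a family over a curve: the incidence $\{(\varepsilon,f):\rk(T_\varepsilon)_f\le m-1\}$ is closed, so its fiber dimension can only jump up at $\varepsilon=0$.

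The main obstacle is that $s<r$ is not automatically larger than $m-1$, and the degenerate cases need care; for those, \Cref{lem:flattening-bound} gives $s\ge\max\{k_a,k_b,k_c\}$. I would use this to guarantee $s\ge k_c>m-1$ whenever $k_c$ is large. In general it ensures that the subspace $L_\varepsilon$ is defined from at least one linear condition, so the count above is meaningful. I would double-check that the inequality actually needed is $s-(m-1)\ge 1$, which follows from $s\ge k_c$ and $m-1=r-k_c$ only if $k_c> r-k_c$. If that fails, I would instead run the argument in the factor with the largest k-rank, permuting roles via symmetry of the hypothesis. Finally, rank $=r$ follows from the same dimension count with $T_\varepsilon$ constant, since rank is at least border rank and the given decomposition has length $r$.
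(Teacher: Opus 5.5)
Your proof is correct and follows the paper's argument. Both work with the contraction variety at level $r-k_c$, use \Cref{lem:contraction-variety-dimension} to get $\dim X_{r-k_c}(T)=n_3-k_c$, bound $\dim X_{r-k_c}$ from below by $n_3-(s-r+k_c)$ for tensors of rank at most $s$, and transfer that bound to $T$ by a closedness/semicontinuity argument; you do this via a curve and a Grassmannian limit, the paper via fiber-dimension semicontinuity on $\sigma_s$. Your third paragraph is unnecessary: your lower bound holds for every tensor of rank $\le s$, not only generic ones, and your second paragraph already disposes of the case $s\le r-k_c$, so you never need \Cref{lem:flattening-bound} (and if you do invoke it, it gives $s\ge k_a\ge 2r+1-k_b-k_c\ge r-k_c+1$ directly, with no permutation of factors).
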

\begin{proof}
	The border rank of $ T $ over $ \Kbar $ is a lower bound to the border rank of $ T $ over $ \K $. Since the goal of this proof is to show a lower bound on the border rank of $ T $, it suffices thus to consider the case where $ \K $ is algebraically closed. We will assume $ \K = \Kbar $ in the following. 
	For $ m = r-k_c $, we consider the contraction variety $ X_m^{(3)}(T) $ of $ T $ with respect to the third tensor index. We wish to apply \Cref{lem:contraction-variety-dimension}. We know that $ 2r+1\le k_a+k_b+k_c $, and this inequality can easily be transformed to $ m = r-k_c \le k_a + k_b - (r+1) $. Furthermore, this choice of $m$ satisfies $r-m\le k_c$, which is the other condition of \Cref{lem:contraction-variety-dimension}. Hence, \Cref{lem:contraction-variety-dimension} implies that $ X_{m}^{(3)}(T) $ is an arrangement of equidimensional subspaces of dimension $ n_3-r+m $ as given by equation \eqref{eq:kruskal-contraction-variety}, so that 
	\begin{align*}
		X_m(T) = \bigcup_{I\subseteq [r], |I| = r-m} \langle c_i \mid i\in I \rangle^{\perp}.
	\end{align*}
	Let $ s $ denote the border rank of $ T $ and denote by $ \sigma_s \subseteq \K^{n_1\times n_2\times n_3}$ the set of tensors of border rank at most $ s $. By \Cref{lem:flattening-bound}, we must have $ s\ge \max\{k_a, k_b, k_c\} \ge m+1$. Here, the last bound follows from the weakened Kruskal inequality, since $ k_a\ge r-k_b+r-k_c+1 \ge r-k_c + 1= m+1 $. We are to show that for each tensor $ S \in \sigma_s $, the dimension of $ X_m(S) $ is at least $ n_3-s+m $. 
	This then shows that $ s = r $, as $ s < r $ would yield the contradiction $ n_3-s+m > n_3-r+m = \dim X_m(T) $. 
	
	For generic $ (a_i', b_i', c_i')_{i=1,\ldots,s} \in \K^{sn_1+sn_2+sn_3} $, we have $ k_{a'} = \min\{n_1, s\}\ge k_a$. Similarly, $k_{b'} \ge k_b $ and $k_{c'} \ge k_c $. Therefore, $ S = \sum_{i = 1}^{s} a_i'\otimes b_i'\otimes c_i' $ satisfies the conditions of \Cref{lem:contraction-variety-dimension}: Indeed, we see that $ m\le k_{a'} + k_{b'} - (s+1)$, since $ s\le r $ and $ m\le k_a + k_b - (r+1) $. Similarly, we see that $ s-m\le k_{c'} $, since $ s\le r $, $ k_{c'}\ge k_c $ and $ m = r-k_c $. Hence, \Cref{lem:contraction-variety-dimension} applied to $ S $ shows that $ X_m(S) $ is an arrangement of equidimensional subspaces of dimension $ n_3-s+m $. 
	
	Note that the contraction varieties $ \P(X_m(S)) $ are the fibers of the variety
	\begin{align*}
		\{([S], [f])\in \P(\sigma_s) \times \P(\K^{n_3}) \mid \rk S_{f} \le m \} 
	\end{align*}
	under projection to the $ S $-coordinates. By the semicontinuity theorem (see \cite[Exercise 3.22, Chapter II]{hartshorne2013algebraic}), the set of tensors $ S $ for which $ X_m(S) $ has dimension at least $ n_3-s+m $ is Zariski closed in $ \sigma_s $. In the previous paragraph, we showed that this set contains a nonempty Zariski open set. Therefore, it must be all of $ \sigma_s $. This concludes the proof. 
\end{proof}

\begin{remark}\label{rem:choice-of-m}
	In the particular case where $ \K = \C $ and the Kruskal rank $ k_c = n_{3} $ is full, we may circumvent the fiber dimension theorem in favour of a more elementary argument. Indeed, in this case, for $ m = r-k_c $, we see that $ X_m(T) $ has dimension $ n_{3}-r+m = n_{3}-k_c = 0 $. A general tensor $ S $ of rank $ m\le s \le r $ will satisfy Kruskal's inequality, so that $ X_m(S) $ has dimension $ n_{3}-s+m = r-s $, which for $ s<r $ is strictly positive. 
	If $ s $ is the border rank of $ T $, then we find a curve $ S_{\varepsilon} $ of Kruskal tensors of rank $ s $ with $ S_{\varepsilon} \to T $ as $ \varepsilon\to 0 $. If the border rank $ s $ is strictly smaller than the rank $ r $, then $ X_m(S_{\varepsilon}) $ is nonempty. Correspondingly, we find a sequence of contractions $ f_{\varepsilon} \in X_{m}(S_{\varepsilon}) $ as $ \varepsilon\to 0 $. Without loss of generality, we normalise the contractions $ f_{\varepsilon} \in \mathbb C^{n_{3}}$ to unit length with respect to some standard Euclidean norm. By compactness of the unit sphere, we find a convergent subsequence $ (f_{\varepsilon_{i}})_{i\in \N} $, which has a limit point $ f $ lying on the unit sphere of $ \mathbb C^{n_{3}} $. Clearly, $ \rk (T_{f}) \le \lim_{i\to \infty}\rk ((S_{\varepsilon_i})_{f_{\varepsilon_i}})$, since a sequence of matrices can only decrease rank in the limit. Therefore, $ f\in X_m(T) = \{0\} $, which is a contradiction. 
\end{remark}

	\section{A Kruskal's theorem for skew decompositions}\label{sec:alternating-kruskal}
{In this section, we prove analogues to the above results for the case of skew decompositions of alternating tensors. In particular, we prove an alternating Kruskal's theorem (\Cref{thm:alternating_kruskal}), and that border skew rank = skew rank for alternating Kruskal tensors (\Cref{thm:border-skew-rank=skew-rank}). 
For technical convenience, we restrict ourselves to fields whose characteristic is neither two nor three. Throughout the section, all fields are therefore understood to be in characteristic zero or at least five.  The reason is that over fields of characteristic two, one has to carefully distinguish between alternating 2-forms and skew-symmetric 2-forms, whereas these notions are equivalent in higher characteristics. A similar problem occurs with $ 3 $-forms in characteristic $ 3 $. By excluding characteristic $ 3 $, we are also able to write $ a_1\wedge a_2\wedge a_3 = \frac{1}{3!} \sum_{\sigma\in \mathfrak{S}_3} \operatorname{sgn}(\sigma) a_{\sigma(1)}\otimes a_{\sigma(2)}\otimes a_{\sigma(3)} $, where $ \mathfrak{S}_3 $ is the group of permutations of $ [3] $ and $ \operatorname{sgn} $ denotes the sign of a permutation.}

\begin{lemma}\label{lem:bipartite_alt}
	Let $a_1 \wedge b_1, \dots, a_r \wedge b_r \in \Lambda^2(\mathbb K^n)\setminus \{0\}$ be such that the collection of subspaces $\{U_i=\langle a_i , b_i \rangle : i=1,\dots, r\}$ has Kruskal rank $k$. Suppose further that $r+m \leq 2k-1$ for some non-negative integer $m$.  If
	\[
	\mathrm{rank}(\sum_{i=1}^r \alpha_i \; a_i \wedge b_i) \leq 2m
	\] for some $\alpha=(\alpha_1,\dots, \alpha_r)\in \Kbar^r$, then $\omega(\alpha) \leq m$, where $\omega(\alpha)$ denotes the number of nonzero entries in $\alpha$.
\end{lemma}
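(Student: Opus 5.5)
I will mirror the proof of \Cref{prop:standard-kruskal}, with rank-one matrices $a_ib_i^T$ replaced by the rank-two skew matrices $a_i\wedge b_i$ (viewed as $a_ib_i^T-b_ia_i^T$ up to a scalar), and with the role of both $k_a$ and $k_b$ played by the single Kruskal rank $k$ of the planes $U_i$. The one basic fact I use is the following: if $I\subseteq[r]$ has $|I|\le k$ and $\alpha_i\neq 0$ for $i\in I$, then $\sum_{i\in I}\alpha_i\, a_i\wedge b_i$ has rank exactly $2|I|$. This holds because the $U_i$, $i\in I$, are in direct sum, so in a basis adapted to $\bigoplus_{i\in I}U_i$ the form is block diagonal with $|I|$ nondegenerate $2\times 2$ blocks. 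We also use subadditivity of rank.

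Suppose for contradiction that $\rk(\sum_i\alpha_i a_i\wedge b_i)\le 2m$ but $\omega(\alpha)>m$. There are two cases. If $\omega(\alpha)\le k$, the basic fact gives rank $2\omega(\alpha)>2m$, a contradiction. If $\omega(\alpha)>k$, choose $I$ in the support with $|I|=k$. Then the part indexed by $I$ has rank $2k$. The remaining part has at most $\omega(\alpha)-k\le r-k$ summands, each of rank $2$, so its rank is at most $2(r-k)$. Hence the total rank is at least $2k-2(r-k)=2(2k-r)$. The hypothesis $r+m\le 2k-1$ gives $2k-r\ge m+1$, so the rank is at least $2m+2>2m$, again a contradiction.

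The argument is nearly identical to \Cref{prop:standard-kruskal}, and is in fact simpler, since only one Kruskal rank appears. The only point needing care is the block-diagonal rank computation in the basic fact. It requires each $a_i\wedge b_i$ to be nonzero, so that $\dim U_i=2$, and the direct-sum property to hold over $\Kbar$; both follow because the dimension of a span is unchanged by field extension. One could even obtain the sharper bound $2(2k-r)$, but it is not needed.
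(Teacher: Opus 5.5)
Your proposal is correct and follows essentially the same route as the paper's proof. You use the same case split ($\omega(\alpha)\le k$ versus $\omega(\alpha)>k$), and in the second case the same subadditivity estimate; your bound $2(2k-r)\ge 2(m+1)$ is exactly the paper's $2k-2(k-m-1)$, rewritten using $r-k\le k-m-1$.
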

\begin{proof}
	Suppose toward contradiction that there exists $\alpha \in \Kbar^r$ such that $\sum_{i=1}^r \alpha_i a_i \wedge b_i$ has rank $\leq 2m$ but $\omega(\alpha)> m$. If $\omega(\alpha) \leq k$, then this linear combination has rank $2 \omega(\alpha)>2m$, a contradiction.
	
	Suppose now that $\omega(\alpha) > k$, and note that $\omega(\alpha) - k \leq r-k \leq k -m -1$. Let $I \subseteq [r]$ be a subset of size $k$ for which $\alpha_i \neq 0$ for all $i \in I$. Then
	\begin{align*}
		\mathrm{rank}(\sum_{i=1}^r \alpha_i \; a_i \wedge b_i) &\geq \mathrm{rank}(\sum_{i \in I} \alpha_i \; a_i \wedge b_i)-\mathrm{rank}(\sum_{i \notin I} \alpha_i \; a_i \wedge b_i)\\
		&\geq 2k -2(k-m-1) = 2(m+1),
	\end{align*}
	a contradiction. This completes the proof.
\end{proof}

\begin{theorem}\label{thm:alternating_kruskal}
	Let $T = \sum_{i=1}^r a_{i1} \wedge a_{i2} \wedge a_{i3} \in \Lambda^3(\mathbb K^n)$ be an alternating tensor, and let $U_i=\langle a_{i1},a_{i2},a_{i3}\rangle$. Suppose the collection of subspaces $U_1,\dots, U_r$ has Kruskal rank $k$, and $2r \leq 3k-2$. Then $T$ has skew rank $r$ and this is the unique skew rank decomposition of $T$.
\end{theorem}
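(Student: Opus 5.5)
We mirror the geometric proof of \Cref{thm:kruskal}, with \Cref{lem:bipartite_alt} playing the role of \Cref{prop:standard-kruskal}. For $f\in\Kbar^n$, the contraction of $T$ is the skew matrix
\[
T_f=\tfrac13\sum_{i=1}^r \bigl(\langle f,a_{i1}\rangle a_{i2}\wedge a_{i3}-\langle f,a_{i2}\rangle a_{i1}\wedge a_{i3}+\langle f,a_{i3}\rangle a_{i1}\wedge a_{i2}\bigr).
\]
Each summand is a $2$-form $\omega_i(f)\in\Lambda^2U_i$ of rank $2$ if $f\notin U_i^\perp$ and zero otherwise. The first obstacle is that $\omega_i(f)$ varies inside $\Lambda^2 U_i$ rather than being a fixed rank-one term times a scalar. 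The remedy is to fix $f$ and write $\omega_i(f)=\alpha_i\, x_i\wedge y_i$ with $\langle x_i,y_i\rangle\subseteq U_i$ two-dimensional. The spaces $\langle x_i,y_i\rangle$ then have Kruskal rank at least $k$, since subspaces of spaces in direct sum are in direct sum. Consequently \Cref{lem:bipartite_alt} applies uniformly in $f$: if $r+m\le 2k-1$ and $\rk T_f\le 2m$, then $f\in U_i^\perp$ for all but at most $m$ indices $i$.

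With $m=r-k+1$, so that $r+m\le 2k-1$ follows from $2r\le 3k-2$, we obtain
\[
X_{2m}(T)=\bigcup_{|I|=k-1}\Bigl(\sum_{i\in I}U_i\Bigr)^\perp,
\]
a union of distinct subspaces of codimension $3(k-1)$. Similarly, $X_{2m-2}(T)$ is the union over $|I|=k$, with codimension $3k$. Given another decomposition of length $r'\le r$ with spaces $U_i'$, we proceed as follows.
\begin{enumerate}
\item Every $(\sum_{j\in J}U'_j)^\perp$ with $|J|=r'-m$ lies in $X_{2m}(T)$. Comparing dimensions (codimension at most $3(r'-m)$) gives $r'\ge r$, hence $r'=r$. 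Here one must also rule out $r'\le m$, using that $X_{2m}(T)\ne\Kbar^n$ because $k\ge2$.
\item The Kruskal rank satisfies $k'\ge k$. If some $k$ of the $U_j'$ were not in direct sum, their annihilator would have dimension greater than $n-3k$ and would lie in $X_{2m-2}(T)$, a finite union of subspaces of dimension $n-3k$, which is a contradiction.
\item Consequently the $\binom{r}{k-1}$ spaces $(\sum_{i\in I}U'_i)^\perp$ are distinct, of the right dimension, and contained in $X_{2m}(T)$. By the uniqueness of irreducible components they coincide with those of $T$. \Cref{thm:combinatorics_general} with $d=3$ then yields $\{U_i\}=\{U'_i\}$.
\end{enumerate}

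Finally, we match the terms, which needs only linear algebra. After reindexing so that $U_i'=U_i$, each $a'_{i1}\wedge a'_{i2}\wedge a'_{i3}$ is a nonzero multiple $\lambda_i$ of $a_{i1}\wedge a_{i2}\wedge a_{i3}$, because $\Lambda^3U_i$ is one-dimensional. It remains to show that the vectors $a_{i1}\wedge a_{i2}\wedge a_{i3}$ are linearly independent, which forces $\lambda_i=1$.

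To prove this independence, suppose $\sum_i\beta_i\, a_{i1}\wedge a_{i2}\wedge a_{i3}=0$ with some $\beta_{i_0}\ne0$. Choose $f$ vanishing on $U_i$ for $k-1$ indices different from $i_0$ but not on $U_{i_0}$. This is possible since $k\ge2$ and the spaces are in direct sum, so $U_{i_0}\not\subseteq\sum_{i\in I}U_i$. Then $\sum_i \beta_i\omega_i(f)=0$ is a combination of at most $r-k+1=m$ terms whose support contains $i_0$. This contradicts \Cref{lem:bipartite_alt} with rank $0\le 2\cdot 0$, applied with $m=0$ (valid since $r\le 2k-1$).

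The step I expect to need the most care is the component-matching step (3), specifically the dimension bookkeeping showing that the $k-1$ index annihilators are exactly the irreducible components. In particular, one must check that distinct $I$ give distinct subspaces, which uses direct sums of $k$ spaces.
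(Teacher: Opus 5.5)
Your proposal is correct and follows essentially the same route as the paper's proof: you describe $X_{2m}(T)$ via Lemma~\ref{lem:bipartite_alt}, use a dimension count to force $r'=r$, compare with $X_{2(r-k)}(T)$ to force $k'\ge k$, match irreducible components and invoke Lemma~\ref{thm:combinatorics_general}, and finally obtain linear independence of the terms from Lemma~\ref{lem:bipartite_alt} with $m=0$. The differences are cosmetic: you fix $m=r-k+1$ (so $h=k-1$), whereas the paper allows any $m\in\{r-k+1,\dots,2k-r-1\}$ and reuses that flexibility in the algorithmic section; your padding with zero coefficients $\alpha_i$ neatly replaces the paper's case split on $|I|$; and making $f$ vanish on $k-1$ of the spaces in your last step is harmless but unnecessary.
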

\begin{proof}
	Let $m\in \{r-k+1,\dots, 2k-r-1\}$ be any integer, and let $h=r-m$. We claim that
	\begin{align}\label{eq:alternating_XT}
		X_{2m}(T)= \bigcup_{i_1<\dots<i_{h}} \langle U_{i_1},\dots, U_{i_{h}} \rangle^{\perp}.
	\end{align}
	For the containment $\supseteq$, note that if $f \in\langle U_{i_1},\dots, U_{i_{h}} \rangle^{\perp}$, then $T_f$ is a linear combination of at most $r-h = m$ terms of skew rank 1, hence $f \in X_{2m}(T)$. Conversely, suppose that $T_f$ has rank $\leq 2 m$. Note that $T_f= \sum_{i} M_{if}$ where the $M_{if}$ are wedge products with image $U_i \cap \langle f \rangle^{\perp}$. Let $I\subseteq [r]$ be the set of indices for which $f \notin U_i^{\perp}$. Since the collection of subspaces $U_1,\dots, U_r$ has Kruskal rank $k$, it follows that the collection of subspaces $U_i \cap \langle f \rangle^{\perp}$ as $i$ ranges over $I$ has Kruskal rank at least $\min\{|I|,k\}$. If $|I| \leq m$ then $f$ is orthogonal to at least $r-m=h$ subspaces $U_i$, so $f$ is contained in the righthand side of~\eqref{eq:alternating_XT}. Now suppose $|I|>m$. If $|I| \geq k$, then this contradicts Lemma~\ref{lem:bipartite_alt} since $r+m \leq 2k-1$. If $|I|<k$, then $T_f$ has rank $2|I|>2m$, a contradiction. This proves the containment $\subseteq$.
	
	It is clear that $h\leq k-1$, hence the spaces $\langle U_{i_1},\dots, U_{i_{h}} \rangle^{\perp}$ are pairwise non-equal, hence~\eqref{eq:alternating_XT} is the decomposition of $X_{2m}(T)$ into irreducible components.
	
	Let \[T=\sum_{i=1}^{r'} a_{i1}' \wedge a_{i2}' \wedge a_{i3}'\] be an $r'\leq r$-term decomposition of $T$. We first prove that $r'=r$. Let $U_i':=\langle a_{i1}',a_{i2}',a_{i3}'\rangle.$ Recall from
\eqref{eq:alternating_XT} that
\[
    \dim X_{2m}(T)=n-3h=n-3(r-m).
\]

If $r'\leq m$, then for every $f\in \Kbar^n$, the contraction $T_f$ is
a sum of at most $r'\leq m$ skew rank-one matrices.  Hence
$X_{2m}(T)=\Kbar^n$, contradicting
\[
    \dim X_{2m}(T)=n-3h<n.
\]
Therefore $r'>m$. Now let $J\subseteq [r']$ be any subset of cardinality $r'-m$.  If $f\in \langle U_j' : j\in J\rangle^\perp,$
then the summands indexed by $J$ vanish after contraction by $f$, so
$T_f$ is a sum of at most $m$ skew rank one (hence rank two) matrices.  Thus $\langle U_j' : j\in J\rangle^\perp\subseteq X_{2m}(T).$
Since
\[
    \dim \langle U_j' : j\in J\rangle\leq 3(r'-m),
\]
we get
\[
    \dim X_{2m}(T)\geq n-3(r'-m).
\]
Combining this with $\dim X_{2m}(T)=n-3(r-m)$, we obtain $r'\geq r$.
Since $r'\leq r$, it follows that $r'=r$.

Arguing similarly to before, it is clear that
\begin{align}\label{eq:Xm1alt}
X_{2(r-k)}(T)
=
\bigcup_{\substack{I\subseteq [r]\\ |I|=k}}
\langle U_{i_1},\dots, U_{i_{k}} \rangle^{\perp}.
\end{align}
Note that $r=k$ is possible. Let $k'$ be the Kruskal rank of $\{U'_1,\dots, U'_r\}$. We now use~\eqref{eq:Xm1alt} to prove that $k'\ge k$. Suppose toward contradiction that $k'<k$. Then there exists a
subset $J\subseteq [r]$ with $|J|=k$ such that the set $\{U'_j:j\in J\}$ is not in direct sum. Hence
\[
\dim \langle U'_j:j\in J\rangle^\perp\ge n-3k+1.
\]
But every $f\in \langle U'_j:j\in J\rangle^\perp$ annihilates the $k$
summands indexed by $J$ in the primed decomposition. Therefore $T_f$ is
a sum of at most $r-k$ skew rank one matrices, so
\[
\langle U'_j:j\in J\rangle^\perp\subseteq X_{2(r-k)}(T).
\]
This is impossible, because $X_{2(r-k)}(T)$ is a finite union of linear
subspaces of dimension $n-3k$, whereas
$\langle U'_j:j\in J\rangle^\perp$ is a linear subspace of dimension at
least $n-3k+1$. Therefore $k'\ge k$.

Since $h \leq k-1\leq k'-1$, the spaces $\langle U'_{i_1}, \dots, U'_{i_{h}} \rangle$ are all distinct. Clearly,
	\begin{align}\label{eq:xtprimealt}
		X_{2m}(T) \supseteq \bigcup_{i_1<\dots<i_{h}} \langle U_{i_1}',\dots, U_{i_{h}}' \rangle^{\perp}.
	\end{align}
	Since the irreducible components appearing in the righthand side of~\eqref{eq:xtprimealt} have the same dimension and number as the irreducible components of $X_{2m}(T)$, it follows that~\eqref{eq:xtprimealt} holds with equality.  By Lemma~\ref{thm:combinatorics_general} (applied to the $h$-spans), it holds that
	\begin{align*}
		\{U_1 ,\dots, U_r\} = \{U'_1 ,\dots, U'_r\}.
	\end{align*}
	By permuting terms in the primed decomposition of $T$, we may assume $U_i'=U_i$, and hence $a_{i1}' \wedge a_{i2}' \wedge a_{i3}' = \alpha_i \; a_{i1} \wedge a_{i2} \wedge a_{i3}$ for some $\alpha_i \neq 0$. To complete the proof, we just need to show that $\alpha_i=1$ for all $i$. For this, it suffices to prove that $\{a_{i1} \wedge a_{i2} \wedge a_{i3} : i=1,\dots, r\}$ is linearly independent. Let $f \in \Kbar^n$ be any vector that is not orthogonal to any $U_i$, and note that $(a_{i1} \wedge a_{i2} \wedge a_{i3})_f$ is a matrix of rank two with image contained in $U_i$. It suffices to prove that these $r$ matrices are linearly independent. Note that the images of these matrices still have Kruskal rank at least $k$. Since $2r \leq 3k-2$, we have $r \leq 2k-1 + (k-r-1) \leq 2k-1$, so by Lemma~\ref{lem:bipartite_alt} these matrices are linearly independent. This completes the proof.
\end{proof}

\subsection{The inequality appearing in our alternating Kruskal's theorem is sharp}
It was shown in \cite{derksen2013kruskal} that the inequality appearing in Kruskal's theorem is sharp. We use Derksen's example to prove sharpness also for the inequality appearing in our alternating Kruskal's theorem.   

\begin{theorem}\label{thm:alternating_sharp}
	Let $\mathbb K$ be a field with more than $2r$ elements, or a finite field of characteristic at least $2r$. If $2r = 3k-1$, then there exists a tensor $T=\sum_{i=1}^r a_i \wedge b_i \wedge c_i$ for which the Kruskal rank of the subspaces $U_i=\langle a_i, b_i, c_i\rangle$ is at least $k$, and this is not the unique skew-rank decomposition of $T$. 
\end{theorem}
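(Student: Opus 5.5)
The plan is to transfer Derksen's sharpness example for the classical Kruskal theorem \cite{derksen2013kruskal} into the alternating setting. We use a linear embedding of an ordinary tensor space into $\Lambda^3$ that sends rank-one tensors to wedge products. It also sends the three classical Kruskal ranks to a single subspace Kruskal rank that is at least their minimum.

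\textbf{Step 1 (input from Derksen).} First I would extract from \cite{derksen2013kruskal} a tensor $S=\sum_{i=1}^r x_i\otimes y_i\otimes z_i\in \K^{p}\otimes\K^{p}\otimes\K^{p}$ with Kruskal ranks $k_x=k_y=k_z=k$ and $3k=2r+1$. It must have a second decomposition $S=\sum_{i=1}^r x_i'\otimes y_i'\otimes z_i'$ of the same length $r$ whose multiset of rank-one terms differs from the first. Derksen's construction is available for field sizes and characteristics of the kind stated in the theorem, and this is where that hypothesis enters. I expect this step to be the main obstacle. I need to check that his construction can be taken with all three Kruskal ranks equal to $k$, i.e.\ the balanced case $k_a=k_b=k_c$ with $k_a+k_b+k_c=2r+1$. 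I also need to check that the competing decomposition has length exactly $r$ and not less. If his example is only stated for unbalanced triples, I would try to adapt it directly, e.g.\ by modifying a Vandermonde/moment-curve type configuration of his kind. The alternative of padding a single mode with extra generic vectors does not obviously preserve the Kruskal ranks, so adaptation is the route I would try first.

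\textbf{Step 2 (embedding).} Let $V=V_1\oplus V_2\oplus V_3$ with $V_j\cong \K^{p}$, so $n=3p$. Consider the linear map $\iota\colon V_1\otimes V_2\otimes V_3\to\Lambda^3(V)$ given by $u\otimes v\otimes w\mapsto u\wedge v\wedge w$. It is injective: if $\{e^{(j)}_s\}$ is a basis of $V_j$, then $\iota$ maps the product basis bijectively onto a subset of the standard basis of $\Lambda^3(V)$, namely the basis vectors with one index from each block. Set $T=\iota(S)=\sum_i x_i\wedge y_i\wedge z_i$, with $x_i\in V_1$, $y_i\in V_2$, $z_i\in V_3$.

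\textbf{Step 3 (Kruskal rank and non-uniqueness).} Let $U_i=\langle x_i,y_i,z_i\rangle$. For $I\subseteq[r]$ we have
\[
\sum_{i\in I}U_i=\langle x_i:i\in I\rangle\oplus\langle y_i:i\in I\rangle\oplus\langle z_i:i\in I\rangle .
\]
Hence the $U_i$, $i\in I$, are in direct sum exactly when all three families indexed by $I$ are linearly independent. In particular each $U_i$ is $3$-dimensional, and the Kruskal rank of $\{U_i\}$ equals $\min\{k_x,k_y,k_z\}= k$.

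The second decomposition gives $T=\sum_i x_i'\wedge y_i'\wedge z_i'$, again of length $r$. The two multisets of rank-one terms of $S$ differ, and $\iota$ is injective, so the multisets $\{x_i\wedge y_i\wedge z_i\}$ and $\{x_i'\wedge y_i'\wedge z_i'\}$ also differ. So these are two genuinely different skew decompositions of $T$.

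It remains to see that the given decomposition is not the unique skew-rank decomposition. If the skew rank of $T$ is $r$, then both decompositions are skew-rank decompositions and uniqueness fails. If the skew rank is smaller than $r$, the given decomposition is not a minimal decomposition at all. In either case the statement of the theorem follows.
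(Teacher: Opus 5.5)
Your proposal is correct and is essentially the paper's argument: the paper's $\tilde a_i=a_i\otimes(1,0,0)$, $\tilde b_i=b_i\otimes(0,1,0)$, $\tilde c_i=c_i\otimes(0,0,1)$ followed by antisymmetrization is exactly your block embedding $\iota$, and your direct-sum computation of the subspace Kruskal rank fills in the step the paper calls ``not difficult to verify.'' Your Step~1 worry goes away if you cite Derksen in the form the paper uses, namely a vanishing sum $\sum_{i=1}^q a_i\otimes b_i\otimes c_i=0$ with $q\le 2r$ and all k-ranks at least $k$, split into the first $r$ terms versus the remaining $q-r\le r$ terms. Only ``at least $k$'' is needed, not exact or balanced k-ranks, and a shorter competing decomposition is already handled by your final case distinction.
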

\begin{proof}
	By~\cite[Theorem 2]{derksen2013kruskal} there exists $q \leq 2r$ and product tensors $\{a_{i} \otimes b_{i} \otimes c_{i}: i=1,\dots, q\}$ for which $\sum_{i=1}^q a_{i} \otimes b_{i} \otimes c_{i}=0$ and $k_a,k_b,k_c \geq k$. Note that $2r+1 = 3k \leq 2q+1$, where the equality is by assumption and the inequality follows from Kruskal's theorem. Hence $q \geq r$ and we can write
	\begin{align}\label{eq:non-identifiable-kruskal}
	\sum_{i=1}^r a_i \otimes b_i \otimes c_i = \sum_{i=r+1}^q (-a_i) \otimes b_i \otimes c_i.
	\end{align}
	Let
	\begin{align*}
	\tilde{a}_i := a_i \otimes (1,0,0),\quad \tilde{b}_i := b_i \otimes (0,1,0),\quad \tilde{c}_i := c_i \otimes (0,0,1).
	\end{align*}
	We see that taking the Kronecker product of \eqref{eq:non-identifiable-kruskal} with $e_1\otimes e_2 \otimes e_3$ on both sides yields the identity
	\begin{align}\label{eq:non-identifiable-kruskal-2}
	    \sum_{i=1}^r \tilde{a}_i \otimes \tilde{b}_i \otimes \tilde{c}_i = \sum_{i=r+1}^q (-\tilde{a}_i) \otimes \tilde{b}_i \otimes \tilde{c}_i
	\end{align}
	Antisymmetrizing \eqref{eq:non-identifiable-kruskal-2} yields
	\begin{align}\label{eq:alternating_sharp}
	\sum_{i=1}^r \tilde{a}_i \wedge \tilde{b}_i \wedge \tilde{c}_i = \sum_{i=r+1}^q (-\tilde{a}_i) \wedge \tilde{b}_i \wedge \tilde{c}_i
	\end{align}
	It is not difficult to verify that the collection of subspaces $U_i:=\langle \tilde{a}_i, \tilde{b}_i, \tilde{c}_i \rangle$ has Kruskal rank at least $k$. Since $q-r \leq r$, this then shows that the lefthand side of~\eqref{eq:alternating_sharp} is not a unique skew rank decomposition, completing the proof.
\end{proof}

\subsection{Border skew rank = skew rank for alternating Kruskal tensors}
In a similar fashion to \Cref{sec:border-rank=rank}, we are also able to prove that alternating Kruskal tensors have their border skew rank equal to their skew rank. Recall that the {border skew rank} of a tensor $ T\in \Lambda^3(\K^n) $ is the least value $ s $ such that $ T $ lies in the $ s $-secant of the irreducible affine cone $ Y = \{a\wedge b\wedge c \mid a, b, c\in \K^n\} $. Note that $ \P(Y) $ is isomorphic to the Grassmannian variety $ \grass(3,n) $ of $ 3 $-dimensional subspaces of $ \K^n $. We proceed analogously to \Cref{sec:border-rank=rank} and we begin with a small lemma. 

\begin{lemma}\label{lem:contraction-variety-kruskal-skew}
	Let $ T\in \Lambda^3(\mathbb K^n) $ be an alternating tensor with decomposition $ T = \sum_{i = 1}^{r} a_i\wedge b_i\wedge c_i $. Let $k$ be the Kruskal rank of the subspaces $ U_i = \langle a_i, b_i, c_i \rangle $, and let $ m\le 2k-r-1 $ be bounded as in \Cref{lem:bipartite_alt}. In addition, assume that $ r-k\le m $. Then, the dimension of $ X_{2m}(T) $ equals $ n-3\max\{r-m, 0\} $. Furthermore, $ X_{2m}(T) $ is an arrangement of equidimensional subspaces. 
\end{lemma}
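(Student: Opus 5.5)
I will mirror the proof of \Cref{lem:contraction-variety-dimension}, using the argument that established \eqref{eq:alternating_XT} in the proof of \Cref{thm:alternating_kruskal}. The first step is to dispose of the trivial case: if $r\le m$, every contraction $T_f$ is a sum of at most $r\le m$ skew rank-one matrices. Each of these has rank at most two, so $X_{2m}(T)=\Kbar^n$, which has dimension $n=n-3\max\{r-m,0\}$. For the rest of the argument assume $r>m$ and set $h:=r-m\ge 1$.

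Next I establish the set-theoretic identity
\[
X_{2m}(T)=\bigcup_{I\subseteq[r],\,|I|=h}\langle U_i : i\in I\rangle^{\perp}.
\]
The inclusion $\supseteq$ is immediate. If $f$ annihilates $h$ of the $U_i$, then $T_f$ is a sum of at most $m$ rank-two matrices $(a_i\wedge b_i\wedge c_i)_f$.

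For $\subseteq$, write $T_f=\sum_i (a_i\wedge b_i\wedge c_i)_f$. For $f\notin U_i^\perp$, the $i$-th term is a nonzero multiple of $x_i\wedge y_i$, where $\langle x_i,y_i\rangle=U_i\cap\langle f\rangle^\perp$ is two-dimensional. Let $I$ be the set of indices with $f\notin U_i^\perp$. The subspaces $U_i\cap\langle f\rangle^{\perp}$, $i\in I$, inherit Kruskal rank at least $\min\{|I|,k\}$, because subspaces of spaces in direct sum are again in direct sum. Now apply \Cref{lem:bipartite_alt} to these $|I|$ two-forms with all coefficients nonzero. The hypothesis $m\le 2k-r-1$ gives $|I|+m\le r+m\le 2k-1$. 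When $|I|<k$ the rank is exactly $2|I|$, so this case needs no extra care. The lemma yields $|I|\le m$, so $f$ is orthogonal to at least $r-m=h$ of the $U_i$.

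Finally I read off the dimension. Each component $\langle U_i : i\in I\rangle^\perp$ with $|I|=h$ has codimension $\dim\langle U_i : i\in I\rangle$. The extra hypothesis $r-k\le m$ says exactly that $h\le k$, so the $h$ subspaces are in direct sum and each span has dimension exactly $3h$. Hence every component has dimension $n-3h$, and $X_{2m}(T)$ is an equidimensional subspace arrangement of dimension $n-3(r-m)$.

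The main obstacle is the $\subseteq$ inclusion, and specifically two points in it. The first is checking that the restricted two-dimensional spaces $U_i\cap\langle f\rangle^\perp$ still have enough Kruskal rank. The second is checking that \Cref{lem:bipartite_alt} applies for every possible size of $I$, including $|I|\le k$ and $|I|>k$. Both follow from the inherited direct-sum property and the inequality $r+m\le 2k-1$. Distinctness of the components is not needed for the dimension claim.
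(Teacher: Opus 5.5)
Your proposal is correct and follows essentially the same route as the paper: you dispose of the trivial case, prove the union-of-subspaces identity for $X_{2m}(T)$ via \Cref{lem:bipartite_alt}, and read off the dimension from $r-m\le k$. Your explicit treatment of the restricted planes $U_i\cap\langle f\rangle^{\perp}$, their inherited Kruskal rank, and the case $|I|<k$ is a more careful version of the step the paper only sketches in this lemma; it matches the argument the paper gives in the proof of \Cref{thm:alternating_kruskal}.
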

\begin{proof}
	Note that for each $ f\in \K^n $, $ T_f $ is a matrix which may be written as $ T_f = \sum_{i = 1}^{r} M_{if} $, where $M_{if}$ is a matrix of skew rank at most 1 with $ \im M_{if} = U_i \cap \langle f \rangle^{\perp} $ (note that $ M_{if}=0 $ if and only if $f\in U_i^{\perp}$).
	
	If $ r < m $, then we trivially have $ X_{2m}(T) = \overline{\mathbb K}^n $.  Otherwise, we claim that 
	\begin{align}\label{eq:skew-kruskal-contraction-variety}
		X_{2m}(T) = \bigcup_{I\subseteq [r], |I| = r-m} \langle U_i \mid i\in I \rangle^{\perp}.
	\end{align}
	For the containment ``$ \supseteq $'', note that clearly any vector $ f\in \K^n $ which is orthogonal to at least $ r-m $ of the spaces $ U_i $ leaves after contraction a matrix $ T_f $, which is a sum of at most $ m $ matrices $ M_{if} $ of skew rank 1. Therefore, $ T_f $ has skew rank at most $ m $, hence rank at most $2m$.
	
	For the containment ``$ \subseteq $'', assume that the rank of $ T_f $ is at most $ 2m $, or, equivalently, the skew rank of $ T_f $ is at most $ m $. Since $ U_1,\ldots,U_r $ have Kruskal rank $ k$ and $r+m \leq 2k-1$, \Cref{lem:bipartite_alt} shows that $ f $ is orthogonal to at least $ r-m $ spaces $ U_i $. 

	From \eqref{eq:skew-kruskal-contraction-variety}, it follows that the codimension of each irreducible component of $ X_{2m}(T) $ is at most $ 3(r-m) $. Since we assumed $ r-m\le k$, orthogonality to $ r-m $ of the spaces $ U_i $ imposes independent constraints and thus the dimension of $ X_{2m}(T) $ equals $ n-3(r-m)$. 
\end{proof}

Before we prove \Cref{thm:border-skew-rank=skew-rank}, we derive a first auxiliary bound for the border skew rank of alternating Kruskal tensors. 
\begin{lemma}\label{lem:flattening-bound-skew}
	Let $ T = \sum_{i = 1}^{r} a_i \wedge b_i\wedge c_i $ be such that the subspaces $ U_i = \langle a_i, b_i, c_i \rangle $ have k-rank $ k $ and $ 2r\le 3k-1 $. Then, the border skew rank $ s $ of $ T $ is at least $ k$. Furthermore, for $ m = r-k$, we also have $ s\ge k\ge 2m+1 $. 
\end{lemma}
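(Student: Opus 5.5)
We follow the proof of \Cref{lem:flattening-bound}: find a flattening-type matrix whose rank is a lower bound on border skew rank, and show this rank is large for $T$. Concretely, we use the flattening $\Lambda^3(\K^n)\to \K^n\otimes \Lambda^2(\K^n)$, i.e. the linear map $f\mapsto T_f$ from $(\Kbar^n)^*$ into the space of skew matrices. Its rank is a lower semicontinuous function of $T$, so the set of tensors on which it is at most $N$ is Zariski closed. For a single term $a\wedge b\wedge c$, every contraction lies in $\langle a\wedge b, a\wedge c, b\wedge c\rangle$, so this flattening has rank at most $3$. Hence it has rank at most $3s$ on the secant variety of order $s$, and therefore $3s\ge \rk(\text{flattening of }T)$.

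We then bound the flattening rank of $T$ from below. Its image is the span of all $T_f=\sum_i M_{if}$, which sits inside $\bigoplus$-type spans of $\Lambda^2(U_i)$. Choose $k$ indices $I$; the spaces $U_i$ for $i\in I$ are in direct sum. We estimate the rank of the flattening by its dimension: the image contains the images of $f$ ranging over $\langle U_j: j\neq i\rangle^\perp$ restricted suitably.

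A cleaner route is to use \Cref{lem:bipartite_alt} directly. By that lemma with $m=0$ and $r\le 2k-1$ (which follows from $2r\le 3k-1$, since then $r\le (3k-1)/2\le 2k-1$ for $k\ge1$), the $r$ matrices $(a_i\wedge b_i\wedge c_i)_f$ for a generic $f$ are linearly independent. This yields only rank $\ge r$ of a different flattening. Instead, consider the $3r$ bivectors $a_i\wedge b_i, a_i\wedge c_i, b_i\wedge c_i$. Any nontrivial combination of them has the form $\sum_i \beta_i$ with $\beta_i\in\Lambda^2(U_i)$ and each $\beta_i$ of rank $0$ or $2$. Apply the argument of \Cref{lem:bipartite_alt} with $m=0$ to these decomposable $\beta_i$, which have images in $U_i$, Kruskal rank $k$, and $r\le 2k-1$. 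This shows the sum is nonzero whenever some $\beta_i\ne 0$. Hence the $3r$ bivectors are linearly independent.

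Since $T=\sum_i a_i\wedge b_i\wedge c_i$ and the $U_i$ span a space in which the vectors $a_i,b_i,c_i$ for $i\in I$, $|I|=k$, are independent, we can choose dual functionals $f$ picking out each of $a_i,b_i,c_i$ for $i\in I$ while killing the other vectors of $\bigoplus_{i\in I} U_i$. Then $T_f$ equals $\pm$ the corresponding bivector plus contributions from indices outside $I$. This is the main obstacle: controlling those contributions. We handle it by instead showing that the image of the flattening has dimension $\ge 3k$. The contractions by the $3k$ dual functionals are $3k$ matrices; each equals a distinct basis bivector of $\Lambda^2(U_i)$, $i\in I$, plus terms in $\Lambda^2(U_j)$, $j\notin I$. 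A linear relation among them gives a relation among the $3r$ independent bivectors after expanding each $\Lambda^2(U_j)$-term in its basis. The coefficients on the $I$-bivectors must vanish, so the $3k$ contractions are independent. Thus $3s\ge 3k$, i.e. $s\ge k$.

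Finally, with $m=r-k$, the inequality $2r\le 3k-1$ gives $2(m+k)\le 3k-1$, i.e. $2m+1\le k$. Hence $s\ge k\ge 2m+1$.
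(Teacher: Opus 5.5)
Your proof is correct, but it takes a different route from the paper. The paper does not use the full contraction map $\Kbar^n\to\Lambda^2(\Kbar^n)$, $f\mapsto T_f$. Instead it exhibits a single contraction $T_f$ of rank $2k$, taking $f$ orthogonal to $r-k$ of the $U_i$ and generic otherwise, which is possible since $r-k\le k-1$. It then notes that every tensor $S$ of skew rank $s<k$ has $X_{2k-2}(S)=\Kbar^n$, and uses semicontinuity to transfer this closed condition to $T\in\sigma_s$, giving a contradiction.

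So the paper bounds $2s$ from below by the maximal rank of one contraction. You bound $3s$ from below by the rank of the whole contraction map, using that each skew rank one term contributes at most $3$. Your extra ingredient is the linear independence of the $3r$ bivectors $a_i\wedge b_i, a_i\wedge c_i, b_i\wedge c_i$. You correctly obtain it from the argument of Lemma~\ref{lem:bipartite_alt} with $m=0$, since every element of $\Lambda^2(U_i)$ is decomposable and $r\le 2k-1$. Your dual-functional step then correctly yields $3k$ independent contractions.

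That step can be shortcut. Up to a nonzero scalar, $T=\sum_i(a_i\otimes b_i\wedge c_i-b_i\otimes a_i\wedge c_i+c_i\otimes a_i\wedge b_i)$, and the bivectors here are independent. So the flattening has rank exactly $\dim(U_1+\dots+U_r)\ge 3k$, which gives the slightly stronger bound $s\ge\lceil\dim(U_1+\dots+U_r)/3\rceil$.

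The paper's argument is shorter and stays inside the contraction-variety framework used throughout the section. Yours is a genuine flattening bound, parallel to Lemma~\ref{lem:flattening-bound}. In a write-up, drop the abandoned attempts in your second and third paragraphs (the sentence about ``$\bigoplus$-type spans'' and the remark about rank $\ge r$ of a different flattening), since they are not part of the argument.
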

\begin{proof}
	Just as in the proof of \Cref{lem:contraction-variety-kruskal-skew}, we write $ T_f = \sum_{i = 1}^{r} M_{if} $. There exists $ f $ such that the matrix $ T_f $ has rank $ 2k $ (and skew rank $ k $). This is seen by choosing $ f $  orthogonal to $ r-k $ spaces $ U_{i_1},\ldots,U_{i_{r-k}} $ and such that $ U_i\cap\langle f \rangle^{\perp} $ is two-dimensional for the other $ k $ spaces. This is possible because $r-k \leq 2k-r-1\leq k-1$. If $ T $ had border skew rank $ s < k \le r $, then $ T $ would lie in the Zariski closure $ \sigma_s $ of the set of tensors of skew rank $ s $. For each such tensor $ S $ it holds that $ X_{2k-2}(S) = \Kbar^n $.
	
	Note that the contraction varieties $ \P(X_{2k-2}(S)) $ are the fibers of the variety
	\begin{align*}
		\{([S], [f])\in \P(\sigma_s) \times \P(\Kbar^{n}) \mid \rk(S_{f}) \le 2k-2 \}
	\end{align*}
	under projection to the $ S $-coordinates. By the semicontinuity theorem (see \cite[Exercise 3.22, Chapter II]{hartshorne2013algebraic}), the set of tensors $ S $ for which $ X_{2k-2}(S) $ has dimension at least $ n $ is Zariski closed in $ \sigma_s $. Hence, $ X_{2k-2}(T) = \Kbar^n $, in contradiction with the contraction we described above for which $ T_f $ has rank $ 2k $. Therefore, $ s \ge k $. 
	Due to $ 2r\le 3k-1 $ we see $ 2m = 2r-2k\le k-1 $, so that $ k\ge 2m+1 $.
\end{proof}

\begin{theorem}\label{thm:border-skew-rank=skew-rank}
	Let $ T\in \Lambda^{3}(\K^n) $ be an alternating tensor with a decomposition $ T = \sum_{i = 1}^{r} a_i\wedge b_i\wedge c_i $. Let $ k $ denote the Kruskal rank of the collection of subspaces $ U_i = \langle a_i, b_i, c_i \rangle $. If $ 2r\le 3k - 1 $, then the border skew rank of $ T $ is $ r $. 
\end{theorem}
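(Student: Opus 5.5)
We mirror the proof of \Cref{thm:Kruskal_border_rank_precise}. Since we want a lower bound on the border skew rank, we may pass to $\Kbar$ and assume $\K$ is algebraically closed. Set $m = r-k$. The hypothesis $2r\le 3k-1$ gives $m = r-k \le 2k-r-1$, which is exactly the bound required in \Cref{lem:bipartite_alt} and \Cref{lem:contraction-variety-kruskal-skew}. The extra hypothesis $r-k\le m$ of the latter lemma holds with equality. Hence, by \eqref{eq:skew-kruskal-contraction-variety}, $X_{2m}(T)$ is an arrangement of equidimensional subspaces of dimension $n-3(r-m) = n-3k$.

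Let $s$ be the border skew rank of $T$. Clearly $s\le r$, and by \Cref{lem:flattening-bound-skew} we have $s\ge k\ge 2m+1$, so in particular $s > m$. The goal is to show that every $S\in\sigma_s$ satisfies
\[
\dim X_{2m}(S)\ge n-3(s-m).
\]
If $s<r$, this gives $\dim X_{2m}(T)\ge n-3(s-m) > n-3(r-m)$, contradicting the computation above. Hence $s=r$.

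For the general tensor in $\sigma_s$, take $S=\sum_{i=1}^s a_i'\wedge b_i'\wedge c_i'$ with generic vectors. The spaces $U_i'$ are then generic $3$-planes, with Kruskal rank $k'=\min\{s,\lfloor n/3\rfloor\}$. We need $k'\ge k$. Since $3k\le n$ (the $k$ spaces $U_i$ are in direct sum) and $s\ge k$, this holds. We then check the hypotheses of \Cref{lem:contraction-variety-kruskal-skew} for $S$ with the same $m$.
\begin{itemize}
\item The bound $m\le 2k'-s-1$ follows from $m\le 2k-r-1$ together with $k'\ge k$ and $s\le r$.
\item The bound $s-k'\le m$ needs more care. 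If $k'=s$ it is trivial. Otherwise $k'=\lfloor n/3\rfloor\ge k$, and then $s-k'\le r-k=m$.
\end{itemize}
The lemma then gives $\dim X_{2m}(S)=n-3(s-m)$ for generic $S$. Note that $s-m\ge 0$. Alternatively, the lower bound alone follows directly from the easy containment ``$\supseteq$'' in \eqref{eq:skew-kruskal-contraction-variety}. Any $f$ orthogonal to $s-m$ of the spaces $U_i'$ lies in $X_{2m}(S)$, and this set has codimension at most $3(s-m)$ for \emph{any} decomposition of length $s$. This even avoids the genericity check.

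Finally, we pass from generic $S$ to all of $\sigma_s$ by upper semicontinuity of fiber dimension, as in \Cref{lem:flattening-bound-skew}. Consider the incidence variety
\[
\{([S],[f])\in \P(\sigma_s)\times\P(\Kbar^n)\mid \rk S_f\le 2m\},
\]
whose fibers over $[S]$ are $\P(X_{2m}(S))$. The locus of $S$ where the fiber has dimension at least $n-3(s-m)-1$ is Zariski closed in $\sigma_s$ and contains a dense set, hence equals $\sigma_s$. In particular it contains $T$. The main technical point is making sure the dimension count uses $X_{2m}(T)$ being \emph{exactly} of dimension $n-3k$; this is why $m=r-k$ is chosen so that the condition $r-m\le k$ of the lemma holds. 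If $s-m\le 0$ were possible the inequality would degenerate, but $s\ge 2m+1$ rules this out.
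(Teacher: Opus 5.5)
Your proposal is correct and follows the paper's proof essentially step for step: the choice $m=r-k$, the exact dimension $n-3k$ of $X_{2m}(T)$ from \Cref{lem:contraction-variety-kruskal-skew}, the bound $s\ge k\ge 2m+1$ from \Cref{lem:flattening-bound-skew}, the generic length-$s$ decomposition with $k'=\min\{s,\lfloor n/3\rfloor\}\ge k$, and upper semicontinuity of fiber dimension on the incidence variety over $\P(\sigma_s)$. Your side remark is also valid: the lower bound $\dim X_{2m}(S)\ge n-3(s-m)$ already holds for every sum of $s$ skew rank-one terms by the easy containment, as in the proof of \Cref{thm:alternating_kruskal}, so the genericity check in the paper's argument is not actually needed.
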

\begin{proof}
	The border skew rank of $ T $ over $ \Kbar $ is a lower bound to the border skew rank of $ T $ over $ \K $. It suffices thus to consider the case where $ \K $ is algebraically closed, which we will assume in the following. 
	For $ m := r-k $, we consider the contraction variety $ X_{2m}(T) $ of $ T $. We wish to apply \Cref{lem:contraction-variety-kruskal-skew}. We know that $ 2r\le 3k-1 $, and this inequality can easily be transformed to $ m = r-k \le 2k - (r+1) $. Furthermore, this choice of $m$ satisfies $r-m\le k$, which is the other condition of \Cref{lem:contraction-variety-kruskal-skew}. Hence, \Cref{lem:contraction-variety-kruskal-skew} implies that $ X_{2m}(T) $ is an arrangement of equidimensional subspaces of dimension $ n-3r+3m $ as given by equation \eqref{eq:skew-kruskal-contraction-variety}, so that 
	\begin{align*}
		X_{2m}(T) = \bigcup_{I\subseteq [r], |I| = r-m} \langle U_i \mid i\in I \rangle^{\perp}.
	\end{align*}
	Let $ s $ denote the border skew rank of $ T $ and denote by $ \sigma_s \subseteq \Lambda^3(\K^n)$ the set of tensors of border skew rank at most $ s $. We are to show that for each tensor $ S \in \sigma_s $, the dimension of $ X_{2m}(S) $ is at least $ n-3s+3m $. This then shows that $ s = r $, as $ s < r $ would yield the contradiction $ n-3s+3m > n-3r+3m = \dim X_{2m}(T) $. 
	By \Cref{lem:flattening-bound-skew}, we know that $ s \ge k \ge 2m+1\ge m$. 
	
	For generic $ (a_i', b_i', c_i')_{i=1,\ldots,s} \in \K^{3ns} $, we have that the collection $ U_i' = \langle a_i', b_i', c_i' \rangle $ of 3-dimensional subspaces has full k-rank $ k' = \min\{\lfloor{n/3}\rfloor, s\}\ge k$, and thus the tensor $ S = \sum_{i = 1}^{s} a_i'\wedge b_i'\wedge c_i' $ satisfies the conditions of \Cref{lem:contraction-variety-kruskal-skew}: Indeed, we see that $ m\le 2k' - (s+1)$, since $ k'\ge k$, $ s\le r $ and $ m\le 2k - (r+1) $. Similarly, we see that $ s-m\le k' $, since $ s\le r $, $ k'\ge k $ and $ m = r-k $. Hence, \Cref{lem:contraction-variety-kruskal-skew} applied to $ S $ shows that $ X_{2m}(S) $ is an arrangement of equidimensional subspaces of dimension at least $ n-3s+3m $, with equality if $ s\ge m $. 
		
	Note that the contraction varieties $ \P(X_{2m}(S)) $ are the fibers of the variety
	\begin{align*}
		\{([S], [f])\in \P(\sigma_s) \times \P(\K^{n}) \mid \rk(S_{f}) \le 2m \}
	\end{align*}
	under projection to the $ S $-coordinates. By the semicontinuity theorem (see \cite[Exercise 3.22, Chapter II]{hartshorne2013algebraic}), the set of tensors $ S $ for which $ X_{2m}(S) $ has dimension at least $ n-3s+3m $ is Zariski closed in $ \sigma_s $. In the previous paragraph, we showed that this set contains a nonempty Zariski open set. Therefore, it must be all of $ \sigma_s $. This concludes the proof. 
\end{proof}

\section{An algorithmic alternating Kruskal's theorem}\label{sec:algorithmic_alternating}
In this section, we present an algorithm to compute the skew rank decomposition of an alternating Kruskal tensor. Conceptually, it can be seen as a version of the algorithm of Domanov and de Lathauwer \cite{domanov2014canonical} or the recent \cite{BSS26} 
for alternating Kruskal tensors. It can also be seen as an alternative to Vannieuwenhoven's algorithm for skew rank \cite{Vannieuwenhoven_2026_Chiseling} which holds in the overcomplete setting of higher ranks, albeit at the price of higher computational complexity.

Conceptually, the algorithm follows the identifiability proof from \Cref{thm:alternating_kruskal}: For suitable ranks, contraction varieties of alternating Kruskal tensors are subspace arrangements. As in the proof of \Cref{thm:alternating_kruskal}, it suffices to ``learn'' the arrangement, by finding bases for each subspace. For literature on learning subspace arrangements, we refer to \cite{Ma_Yang_Derksen_Fossum_2008}. In our setting, we learn the arrangement from linear slices, i.e., from witness sets of the contraction variety. See \cite{Sommese_Verschelde_Wampler_2001} for background on witness sets. Throughout, we assume again that $ \operatorname{char} \K \notin \{2,3\}$. We start with a lemma on learning subspace arrangements from linear slices. 

\begin{lemma}\label{lem:parallel-slice-component-recovery}
	Let
	\[
		X=\bigcup_{I\in A}L_I\subseteq \Kbar^n
	\]
	be a finite union of distinct $t$-dimensional linear subspaces. Let $E$ be a subspace of codimension $t$ and let $F$ be a complementary
	subspace, so that $\Kbar^n=E\oplus F$. Assume that $ E\cap L_I = \{0\}$ for each $ I \in A$. 
	For $f\in F$, write $W_f:=X\cap (f+E)$. Then, choosing a generic basis $f_1,\ldots,f_t$ of $ F $, bases for the subspaces $L_I$ can be recovered in time $\mathcal{O}(nt|A|^3)$ from the finite ``witness'' sets
	\[
		W_{f_1},\ldots,W_{f_t}, W_{f_1+f_2},\ldots,W_{f_1+f_t}.
	\]
\end{lemma}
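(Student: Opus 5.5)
Each $L_I$ is $t$-dimensional with $L_I\cap E=\{0\}$ and $\Kbar^n=E\oplus F$, $\dim F=t$. So the projection $\pi_F:\Kbar^n\to F$ along $E$ restricts to an isomorphism $L_I\to F$. Hence for every $f\in F$ the affine slice $L_I\cap(f+E)$ is exactly one point $p_I(f)$, and $p_I:F\to L_I$ is a linear isomorphism, the inverse of $\pi_F|_{L_I}$. Therefore $W_f=\{p_I(f): I\in A\}$ has at most $|A|$ points. For generic $f$ it has exactly $|A|$ points: since the $L_I$ are distinct and of the same dimension, $p_I\neq p_J$ as linear maps for $I\neq J$, so $\{f: p_I(f)=p_J(f)\}$ is a proper subspace of $F$. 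With $f_1,\dots,f_t$ a generic basis, we may therefore assume that every slice $W_{f_j}$ and $W_{f_1+f_j}$ consists of $|A|$ distinct points. Once the vectors $p_I(f_1),\dots,p_I(f_t)$ are known for a fixed $I$, they form a basis of $L_I$, because $p_I$ is an isomorphism. The task is thus to match points across the slices, i.e.\ to decide which point of $W_{f_j}$ belongs to the same component as a given point of $W_{f_1}$.

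The matching uses linearity of $p_I$ and the extra slices $W_{f_1+f_j}$. For $j\ge 2$, take $x\in W_{f_1}$ and $y\in W_{f_j}$, and test whether $x+y\in W_{f_1+f_j}$. If $x=p_I(f_1)$ and $y=p_I(f_j)$, then $x+y=p_I(f_1+f_j)\in W_{f_1+f_j}$. The claim to prove is that, for generic basis vectors, no false match occurs: for $I\neq J$ and any $K$ we need $p_I(f_1)+p_J(f_j)\neq p_K(f_1+f_j)$.

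This is the main obstacle. Fix $I\ne J$ and $K$. The equation $p_I(f_1)+p_J(f_j)=p_K(f_1)+p_K(f_j)$ is a linear condition on the pair $(f_1,f_j)\in F\times F$, namely $(p_I-p_K)(f_1)+(p_J-p_K)(f_j)=0$. This condition is proper, i.e.\ it fails somewhere, as long as one of $p_I-p_K$ and $p_J-p_K$ is nonzero. Since $I\neq J$, $K$ differs from at least one of them, so this holds. The set of bad pairs is therefore a proper linear subspace of $F\times F$. There are only finitely many triples $(I,J,K)$ and indices $j$, so a generic basis avoids all of them, together with the finitely many proper closed conditions from the first paragraph and linear independence. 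One should double check that projecting points $x,y$ to $F$ identifies $\pi_F(x)=f_1$ exactly, so sums live in the right slice; this is automatic by construction.

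The algorithm and its cost are then as follows. Enumerate the points $x^{(1)},\dots,x^{(|A|)}$ of $W_{f_1}$. For each $j\in\{2,\dots,t\}$ and each pair $(x,y)\in W_{f_1}\times W_{f_j}$, test membership of $x+y$ in $W_{f_1+f_j}$. There are $|A|^2$ pairs, each tested against $|A|$ points by vector comparison in $\mathcal O(n)$ time, so each $j$ costs $\mathcal O(n|A|^3)$. Over all $j$ the total is $\mathcal O(nt|A|^3)$. By the genericity claim, each $x=p_I(f_1)$ has exactly one partner $y$ in $W_{f_j}$, namely $p_I(f_j)$. Collecting the partners gives, for each $I$, the tuple $(p_I(f_1),\dots,p_I(f_t))$, which is a basis of $L_I$.
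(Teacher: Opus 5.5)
Your proposal is correct and follows essentially the same route as the paper. Both use the linear isomorphisms $(\pi_F|_{L_I})^{-1}$, match points of $W_{f_1}$ and $W_{f_j}$ by testing whether their sum lies in $W_{f_1+f_j}$, and arrive at the same $\mathcal{O}(nt|A|^3)$ count; your genericity argument (each potential false match $(I,J,K)$ with $I\neq J$ cuts out a proper linear subspace of $F\times F$, since $p_K$ differs from $p_I$ or $p_J$) is a slightly more explicit version of the paper's observation that a false match holding identically would force $L_I+L_J\subseteq L_K$.
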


\begin{proof}
	Since $ E\cap L_I = \{0\} $, for every $f\in F$ the affine space $f+E$ meets
	$L_I$ in a unique point $ x_{I, f} $. Write $ \pi_F\colon E\oplus F\twoheadrightarrow F $ for the linear epimorphism with kernel $ E $. Then $ \pi_{F}|_{L_I}\colon L_I \to F $ is invertible due to $ E\cap L_I = \{0\} $ and it holds that $x_{I, f} = (\pi_F|_{L_I})^{-1}(f)$. In particular, $ x_{I, f} $ depends linearly on $ f $. We write $ B_{I}\colon F\to L_I $ as a shorthand for $ (\pi_F|_{L_I})^{-1} $. 
	
	As we are given $ W_{f_1},\ldots,W_{f_t} $ and $ W_{f_1+f_2},\ldots,W_{f_1+f_t} $, the goal is to match the points $ x_{I, f} $ from all the spaces $ W_{f_i} $ into ``clusters'', so that points $ x_{I, f_i} $ and $ x_{J, f_j} $ are matched together if and only if $ I = J $. We put two points $ x_{I, f_1} \in W_{f_1} $ and $ x_{J, f_i} \in W_{f_i} $ into the same cluster if and only if there exists a point $ z\in W_{f_1+f_i} $ such that $ x_{I, f_1} + x_{J, f_i} = z $. 
	
	Clearly, if $ I = J $ then such a point $ z $ exists, since one may take $ z = x_{I, f_1 + f_i} $. Conversely, we are to show that whenever such a point $ z \in  W_{f_1+f_i} $ exists, which must have the form $ z = x_{K, f_1 + f_i}$ for some $ K\in A $, then we must have $ I = J = K $. 
	However, this is clear by genericity of $ f_1,\ldots,f_t $. Indeed, if we had 
	\begin{align}\label{eq:subspace_recovery_condition}
		x_{I, f_1} + x_{J, f_i} = x_{K, f_1 + f_i}
	\end{align}
	for all $ f_1,\ldots,f_t $, then $ L_I + L_J \subseteq L_K $, since $ B_I, B_J $ are surjections onto $ L_I $ and $ L_J $, respectively. This readily implies $ L_I = L_J = L_K $, as all subspaces have the same dimension. Hence, with this matching rule we can compute the set of sets $ \{\{x_{I,f_1},\ldots,x_{I, f_t}\} \mid I\in A\}$. Now, note that each of these sets spans a space $ L_I $. Indeed, $ x_{I,f_i} =  B_I(f_i) $ and $ B_I $ is a vector space isomorphism. Since $ f_1,\ldots,f_t $ is a basis of $ F $, $ x_{I,f_1},\ldots,x_{I, f_t} $ must therefore be a basis of $ L_I $. This concludes the recovery procedure.
	
	Regarding the runtime, the matching step checks at most $|A|^2$ pairs $(x,y)\in W_{f_1}\times W_{f_i}$ for each $i\in \{2,\dots,t\}$. For each pair, it determines if $x+y\in W_{f_1+f_i}$. Adding two vectors and checking equality of two vectors
	takes time $\mathcal{O}(n)$. Equality needs to be checked for all $ |A| $ elements of $ W_{f_1+f_i} $. Hence determining if $x+y\in W_{f_1+f_i}$ takes time $\mathcal{O}(n|A|)$, yielding a total runtime of $\mathcal{O}(nt|A|^3)$.
	
\end{proof}

There is a small catch when applying the previous lemma. Of course, we do not know whether a given choice of $ E, F $ and $ f_1,\ldots,f_t $ will satisfy the conditions of \Cref{lem:parallel-slice-component-recovery}. Over $ \Kbar = \C $, one may construct a generic space $ E $ and generic $ f_1,\ldots,f_t $ by sampling bases from a normal distribution. Over arbitrary algebraically closed fields, we use the Schwartz-Zippel lemma, stated below. It guarantees that there will be many ``sufficiently generic'' vectors in a sufficiently large box. 

\begin{lemma}[Schwartz-Zippel \cite{Schwartz_1980}]\label{lem:Schwartz-Zippel}
	Let $ p\in \K[x_1,\ldots,x_n] $ be a polynomial of degree $ d $ and let $ S \subseteq \K $ be a finite subset. If $ a\in S^n $ is chosen uniformly, then the probability that $ p(a) = 0 $ is at most $ \frac{d}{|S|} $.
\end{lemma}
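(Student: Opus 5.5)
We prove the Schwartz--Zippel lemma by induction on the number of variables $n$. We may assume $p\neq 0$; otherwise the statement is vacuous in the intended reading, since the bound concerns nonzero polynomials of degree $d$.

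\emph{Base case $n=1$.} A nonzero univariate polynomial of degree $d$ over a field has at most $d$ roots in $\K$. Hence at most $d$ of the $|S|$ equally likely values $a\in S$ satisfy $p(a)=0$, and the probability is at most $d/|S|$.

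\emph{Inductive step.} Write $p$ as a polynomial in $x_1$ with coefficients in $\K[x_2,\ldots,x_n]$:
\[
p=\sum_{j=0}^{k} x_1^{j}\, q_j(x_2,\ldots,x_n),
\]
where $k$ is the largest exponent with $q_k\neq 0$. Then $\deg q_k\le d-k$. Draw $a=(a_1,a')$ with $a'=(a_2,\ldots,a_n)$, and split the event $\{p(a)=0\}$ according to whether $q_k(a')=0$.

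By the induction hypothesis applied to $q_k$ in $n-1$ variables, $\Pr[q_k(a')=0]\le (d-k)/|S|$.

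Now condition on a fixed $a'$ with $q_k(a')\neq 0$. Then $p(x_1,a')$ is a nonzero univariate polynomial of degree exactly $k$. Since $a_1$ is uniform on $S$ and independent of $a'$, the base case gives $\Pr[p(a)=0\mid a']\le k/|S|$ for every such $a'$.

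Combining the two cases,
\[
\Pr[p(a)=0]\le \Pr[q_k(a')=0]+\Pr[p(a)=0 \wedge q_k(a')\neq 0]\le \frac{d-k}{|S|}+\frac{k}{|S|}=\frac{d}{|S|}.
\]

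The only point needing care is this conditioning step. Because $a$ is uniform on $S^n$, the coordinate $a_1$ is independent of $a'$. So for each fixed good $a'$ the conditional bound holds, and averaging over $a'$ gives the unconditional bound on the joint event. The rest of the argument is bookkeeping of degrees, namely $\deg q_k\le d-k$.
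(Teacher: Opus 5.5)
Your proof by induction on the number of variables, splitting on whether the leading coefficient $q_k$ in $x_1$ vanishes, is correct. It is the standard argument from the cited source; the paper gives no proof of its own, only the citation to Schwartz. One small remark: for $p=0$ the claim is false rather than vacuous, so $p\neq 0$ is a genuine implicit hypothesis, and you are right to impose it.
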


To apply the Schwartz-Zippel lemma, we need a bound on the degree of the explicit nondegeneracy conditions that were used in the proof of \Cref{lem:parallel-slice-component-recovery}. This is done in the following lemma. 

\begin{lemma}\label{lem:genericity-implied-by-schwartz-zippel}
	Let $ S\subseteq \Kbar $ be a finite set. If $ E = \langle e_1,\ldots,e_{n-t} \rangle $ and $ F = \langle f_1,\ldots,f_t \rangle $ are sampled by choosing $ (e_1,\ldots,e_{n-t},f_1,\ldots,f_t)\in S^{n\times n} $ uniformly at random, then the conditions of \Cref{lem:parallel-slice-component-recovery}, namely
	\begin{enumerate}[(a)]
		\item $ E\oplus F = \Kbar^n $,
		\item $ E\cap L_I = \{0\} $ for all $ I\in A $,
		\item $ f_1,\ldots,f_t $ form a basis of $ F $, 
		\item $ f_1,\ldots,f_t $ satisfy the matching rule. 
	\end{enumerate}
	are fulfilled with probability at least $ 1-\frac{4n^2|A|^3}{|S|}$. 
\end{lemma}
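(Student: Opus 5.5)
The plan is to express each of the four conditions (a)--(d) as the non-vanishing of an explicit polynomial in the $n^2$ entries of the random matrix $M=(e_1,\ldots,e_{n-t},f_1,\ldots,f_t)$. We then bound the degree of each polynomial, multiply them together, and apply \Cref{lem:Schwartz-Zippel} once to the product. Equivalently, we take a union bound over the individual polynomials. In either case it suffices to show that the degrees add up to at most $4n^2|A|^3$ and that each polynomial is not identically zero.

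First, conditions (a) and (c) are handled together by $p_1=\det M$, which has degree $n$. If $\det M\neq 0$, then the columns of $M$ are linearly independent, which gives $E\oplus F=\Kbar^n$ and shows that $f_1,\ldots,f_t$ is a basis of $F$. Second, for (b), fix a basis matrix $B_I\in\Kbar^{n\times t}$ of $L_I$ and take $p_{2,I}=\det(e_1,\ldots,e_{n-t},B_I)$, which has degree $n-t$ in the variables. We have $p_{2,I}\neq 0$ exactly when $E\cap L_I=\{0\}$ and $\dim E=n-t$. The polynomial is nonzero because some choice of $E$ complementary to $L_I$ exists. Multiplying over $I$ contributes degree at most $n|A|$.

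The main obstacle is condition (d). For a fixed triple $(I,J,K)$ with $I\neq J$ or $J\ne K$, and for a fixed $i\in\{2,\ldots,t\}$, we must certify that \eqref{eq:subspace_recovery_condition} fails. The vectors $x_{I,f}$ are rational in the entries of $M$. By Cramer's rule, $x_{I,f}=L_I$-basis times $(\text{coordinates})$, where the coordinates are obtained by solving $(e_1,\ldots,e_{n-t},B_I)\,(y,z)^T=f$. The $L_I$-component is therefore $B_I\,z$, with $z=\operatorname{adj}(\cdot)f/p_{2,I}$. Clearing the denominators $p_{2,I}p_{2,J}p_{2,K}$ turns the condition $x_{I,f_1}+x_{J,f_i}-x_{K,f_1+f_i}=0$ into a vector of polynomials, each of degree at most about $3n$. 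We then pick one coordinate that is not identically zero. Such a coordinate exists: otherwise the identity would hold on a dense set, and the argument in the proof of \Cref{lem:parallel-slice-component-recovery} would force $L_I=L_J=L_K$. This gives degree $O(n)$ per triple and per $i$, so at most $t|A|^3$ such polynomials contribute degree at most roughly $3n\cdot t|A|^3\le 3n^2|A|^3$.

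Finally, we sum the degrees: $n+n|A|+3n^2|A|^3\le 4n^2|A|^3$. We then apply \Cref{lem:Schwartz-Zippel} to the product polynomial, or take a union bound, to obtain failure probability at most $4n^2|A|^3/|S|$. One point to check carefully is that the nonvanishing coordinate chosen in (d) is nonzero as a polynomial and not merely nonzero on the locus where (b) holds. This follows because the locus where (b) holds is Zariski dense, so a polynomial vanishing there vanishes identically.
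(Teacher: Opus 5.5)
Your proposal is correct and follows the paper's proof. As in the paper, $\det(e_1,\ldots,e_{n-t},f_1,\ldots,f_t)$ handles (a) and (c), the product of the $\det(e_1,\ldots,e_{n-t},\ell_{I,1},\ldots,\ell_{I,t})$ handles (b), denominator-cleared non-matching conditions handle (d), and Schwartz--Zippel is applied once to the product. The only variation is in (d): the paper uses a linear form $\ell_K$ vanishing on $L_K$ and clears $\det(\pi_F|_{L_I})\det(\pi_F|_{L_J})$, while you write $x_{I,f}$ by Cramer's rule over $\det(e_1,\ldots,e_{n-t},B_I)$ and keep a nonzero coordinate of the vector identity, which makes the dependence on $E$ explicit and gives degree $3(n-t)+1$ per condition, still within $4n^2|A|^3$.
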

\begin{proof}
	We can construct a polynomial for each condition independently and then multiply the polynomials together. 
	Conditions (a) and (c) are both satisfied if $ \det(e_1,\ldots,e_{n-t},f_1,\ldots,f_t) \ne 0$. This is an (negated) equation of degree $ n $ in the $ n^2 $ entries of $ e_1,\ldots,e_{n-t},f_1,\ldots,f_t $. 
	Condition (b) is saying that the spaces $ E $ and $ L_I $ are in direct sum for each $ I $. Fixing any basis $ \ell_{I,1},\ldots,\ell_{I,t} $ of $ L_I $, it is implied by $ \det(e_1,\ldots,e_{n-t},\ell_{I,1},\ldots,\ell_{I,t}) \ne 0 $, which is an negated equation of degree $ n-t $ in the entries of $ e_1,\ldots,e_{n-t} $.  
	In total, Condition (b) contributes $ |A| $ equations of degree $ (n-t) $ in $ e_1,\ldots,e_{n-t} $, which can be multiplied to a single equation $ p(e_1,\ldots,e_{n-t})\ne 0 $ of degree $ |A|(n-t) $. 
	
	Multiplying the polynomials from steps (a)-(c) together, we get a polynomial $ p $ of degree $ |A|(n-t) + n $. 
		
	For (d), note that the matching rule is implied by the condition that $ B_{I}(f_1) + B_{J}(f_i) \notin L_K$ for all $ i\in \{2,\ldots,t\} $, where $ B_{I}\colon F\to L_I $ is again the shorthand for $ (\pi_F|_{L_I})^{-1} $. In fact, take any linear form $ \ell_K $ which vanishes on $ L_K $ but not on any of the other spaces of the arrangement. Then, the negated linear equation $ \ell_K(B_{I}(f_1) + B_{J}(f_i)) \ne 0 $ implies the matching rule. Over all parameters $ I, J, K \in A $ (excluding the diagonal $ I = J = K $) and $ i\in \{2,\ldots,t\} $, these are $ (t-1)(|A|^3-|A|) $ rational equations. After multiplying by $ \det(\pi_{F}|_{L_I})\det(\pi_{F}|_{L_J}) $, we get polynomial equations, each of degree $ 2t+1 $. Let us multiply these together to an equation $ q(f_1,\ldots,f_t)\ne 0 $. 
	
	Summarized, our choice of $ (e_1,\ldots,e_{n-t}, f_1,\ldots,f_t) $ works if
	\begin{align*}
		\det(e_1,\ldots,e_{n-t},f_1,\ldots,f_t) \cdot p(e_1,\ldots,e_{n-t}) \cdot q(f_1,\ldots,f_t) \ne 0.
	\end{align*}
	This equation has degree at most $ |A|(n-t) + n + 3t^2|A|^3 $. 
	If $ S\subseteq \Kbar^n $ and $ (e_1,\ldots,e_{n-t}, f_1,\ldots,f_t) \in S^{n\times n} $ are sampled uniformly at random, then the  Schwartz-Zippel lemma, \Cref{lem:Schwartz-Zippel} shows that the probability of not satisfying the equation is at least $ 1-\dfrac{|A|(n-t) + n + 3t^2|A|^3}{|S|} $. Of course, this term is bounded from below by $ 1-\dfrac{4n^2|A|^3}{|S|} $.
\end{proof}

We need two final lemmata before we can start the proof. The contraction variety can be described by the vanishing of suitable minors of contractions $ T_f $. However, in the case of skew-symmetric matrices, it is more appropriate to describe it by the \emph{Pfaffians}. 

We remind the reader that the determinant of a skew-symmetric matrix $ M $ of even size is always the square of another polynomial in the entries of $ M $. This other polynomial is called the \emph{Pfaffian} of $ M $. More precisely, the Pfaffian $ \operatorname{pf}(M) $ of a skew-symmetric matrix $ M \in \K^{2n\times 2n}$ is the unique polynomial (up to sign) such that $ \operatorname{pf}(M)^2 = \det(M) $. If $ M \in \K^{n\times n}$ and $ I\subseteq [n] $ with $ |I| $ even, then the $ I $-\emph{sub-Pfaffian} of $ M $ is the Pfaffian of the submatrix $ M_{I, I} $ of $ M $ whose rows and columns are indexed by $ I $. We denote the $ I $-sub-Pfaffian by $ \operatorname{pf}_I(M) $ and the $ (I, I) $-minor by $ \det_{I, I}(M) $. Thus, $ \operatorname{pf}_I(M)^2 = \det_{I, I}(M) $. 

\begin{lemma}\label{lem:principal-minor-theorem}
	Let $ M\in \K^{n\times n} $ be a skew-symmetric matrix and $ m $ such that $ 1\le m < n $. Then, the following are equivalent.
	\begin{enumerate}[(a)]
		\item The rank of $ M $ is at most $ 2m $.
		\item All $ (2m+1)\times (2m+1) $ minors of $ M $ vanish.
		\item All principal $ (2m+2)\times (2m+2) $ minors of $ M $ vanish. 
		\item All principal $ (2m+2)\times (2m+2) $ sub-Pfaffians vanish. 
		\item For all $ P\in \K^{n\times (2m+2)} $, the Pfaffian of $ P^{T} M P $ vanishes. 
	\end{enumerate}
\end{lemma}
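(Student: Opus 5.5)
The plan is to prove the chain (a)$\Rightarrow$(b)$\Rightarrow$(c)$\Rightarrow$(a), and then attach (d) and (e) to (c). The first two implications are immediate. Rank at most $2m$ means every $(2m+1)\times(2m+1)$ minor vanishes. Every $(2m+2)\times(2m+2)$ principal minor expands along a row into $(2m+1)$-minors, so it vanishes too. When $2m+2>n$, conditions (c)--(e) are vacuous: there are no such principal minors, and for (e) we get $\rk(P^TMP)\le n<2m+2$. In that case the equivalence holds because $\rk M\le n\le 2m+1$, and skew-symmetric matrices have even rank, so $\rk M\le 2m$. I will record this parity fact once and use it throughout.

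The substantive step is (c)$\Rightarrow$(a), which rests on the classical principal minor theorem for skew-symmetric matrices: a skew-symmetric matrix of rank $\rho$ has a nonzero principal $\rho\times\rho$ minor. I would prove this directly. Suppose $\rk M=\rho$, and pick rows $I$ with $|I|=\rho$ that form a basis of the row space. Since $M^T=-M$, the columns indexed by $I$ span the column space. The submatrix $M_{I,I}$ is then the restriction of a rank-$\rho$ bilinear form to complements of its kernel on both sides, so it is invertible. More concretely, $M=CM_{I,I}^{-1}R$ cannot hold unless $M_{I,I}$ is invertible; the standard argument writes the remaining rows as combinations of the rows in $I$ and uses skew-symmetry to transfer the dependence to the columns. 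This shows $\det M_{I,I}\neq0$. Because $\rho$ is even, if $\rho\ge 2m+2$ we can take any $I'\subseteq I$ with $|I'|=2m+2$. The matrix $M_{I,I}$ is invertible and skew-symmetric, and applying the theorem again to $M_{I,I}$ is not enough by itself, since we need a principal minor of size exactly $2m+2$. Instead, I would argue that the rank of a skew-symmetric matrix is at least $2m+2$ exactly when some principal $(2m+2)$-minor is nonzero. To do this, I pick rows spanning a $(2m+2)$-dimensional subspace of the row space and run the same invertibility argument on the skew form restricted to a suitable subspace. The cleanest route is induction on $\rho$. Given an invertible skew-symmetric $M_{I,I}$ of size $\rho\ge 4$, there exist indices $i,j\in I$ such that deleting rows and columns $i,j$ leaves an invertible principal submatrix. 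This follows from the Pfaffian row expansion $\operatorname{pf}(M_{I,I})=\sum_j \pm M_{ij}\operatorname{pf}(M_{I\setminus\{i,j\}})$, because a nonzero left-hand side forces some term on the right to be nonzero. Iterating brings the size down to $2m+2$. This step is the main obstacle, and the Pfaffian expansion is what I would rely on to resolve it.

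For (c)$\Leftrightarrow$(d), note that $\operatorname{pf}_I(M)^2=\det_{I,I}(M)$ and that a field has no nilpotents. For (e)$\Rightarrow$(d), take $P$ to be the coordinate selection matrix for $I$, which gives $P^TMP=M_{I,I}$. For (a)$\Rightarrow$(e), observe that $\rk(P^TMP)\le\rk M\le 2m<2m+2$, so $\det(P^TMP)=0$ and hence its Pfaffian vanishes. Together these close all the equivalences.
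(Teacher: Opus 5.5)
Your proof is correct, but it takes a different route from the paper. The paper gives no argument at all: it cites the principal minor theorem for skew-symmetric matrices (\cite{Kodiyalam_Lam_Swan_2008}, \cite{Stoll_1952}). You prove the only nontrivial implication, (c)$\Rightarrow$(a), from scratch in two steps:
\begin{enumerate}
\item A skew-symmetric matrix of rank $\rho$ has an invertible principal $\rho\times\rho$ submatrix $M_{I,I}$. Here the rows $I$ form a basis of the row space, so by $M^T=-M$ the columns $I$ form a basis of the column space.
\item The Pfaffian row expansion of $\operatorname{pf}(M_{I,I})\neq 0$ yields an invertible principal submatrix of size $\rho-2$. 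Iterating descends to size $2m+2$.
\end{enumerate}
The remaining implications, including the degenerate case $2m+2>n$, are handled correctly via the parity of the rank. Your route makes the lemma self-contained and shows exactly what is used: characteristic $\neq 2$ and the Pfaffian expansion, which holds over any field. The citation buys brevity.

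When writing it up, make three cleanups:
\begin{enumerate}
\item Delete the false start claiming you may take \emph{any} $I'\subseteq I$ with $|I'|=2m+2$. Counterexample: let $M$ be $6\times 6$ block-diagonal with three blocks $\bigl(\begin{smallmatrix}0&1\\-1&0\end{smallmatrix}\bigr)$, and take $I=\{1,\dots,6\}$, $m=1$, $I'=\{1,3,5,6\}$. Then row $1$ of $M_{I',I'}$ is zero.
\item Delete the vague ``restrict the form to a suitable subspace'' sentence.
\item Replace the garbled justification of invertibility with the clean one. Write $M=XR$ with $R=M_{I,\cdot}$ and $X_{I,\cdot}=\mathrm{Id}$. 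Then $M_{\cdot,I}=XM_{I,I}$, and since $M_{\cdot,I}$ has rank $\rho$, so does $M_{I,I}$.
\end{enumerate}
Evenness of $\rho$ then comes for free, because an odd-size skew-symmetric matrix has zero determinant in characteristic $\neq 2$. So you need not assume the parity fact separately.
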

\begin{proof}
	This is the principal minor theorem for skew-symmetric matrices. See for instance \cite[Theorem 3.12]{Kodiyalam_Lam_Swan_2008} and \cite{Stoll_1952}. 
\end{proof}

\begin{lemma}\label{lem:random-subpfaffians}
	Let $ \mathcal{L}\subseteq \Lambda^2(\Kbar^n) $ be an $ N $-dimensional affine space of skew-symmetric matrices. For generic $ P_1,\ldots,P_{N+1} \in \Kbar^{n\times (2m+2)} $, the set $ X $ of matrices in $ \mathcal{L} $ of rank at most $ 2m $ equals the set of matrices $ M \in \mathcal{L}$ such that $ \operatorname{pf}(P_1^{T} M P_1) = \ldots = \operatorname{pf}(P_{N+1}^{T} M P_{N+1}) = 0 $.
	
	Furthermore, if $ S\subseteq \Kbar $ is a finite set and $ P_1,\ldots,P_{N+1} $ are sampled uniformly at random from $ S^{n\times (2m+2)} $, then the Pfaffians above cut out the set $ X $ with probability at least $ 1-\frac{4(m+1)^{N+1}}{|S|} $. 
\end{lemma}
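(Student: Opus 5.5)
The plan is to reduce the statement to a finite, explicit family of Pfaffian equations that cuts out $X$ inside $\mathcal{L}$, and then pass to random linear combinations. First parametrize $\mathcal{L} = M_0 + \langle M_1,\ldots,M_N\rangle$ by $y\in \Kbar^N$, writing $M(y)$. By \Cref{lem:principal-minor-theorem}(d), $X$ is cut out by the principal sub-Pfaffians $\operatorname{pf}_I(M(y))$ with $|I| = 2m+2$. Each of these is a polynomial of degree at most $m+1$ in $y$. By (e), the polynomial $g_P(y) := \operatorname{pf}(P^T M(y) P)$ vanishes on $X$ for every $P$. Expanding the Pfaffian multilinearly (Cauchy--Binet for Pfaffians), we get $g_P = \sum_{I} \det(P_{I,\cdot})\operatorname{pf}_I(M(y))$. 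So $g_P$ lies in the span $V$ of the sub-Pfaffians, and its coefficients form the Plücker vector of the column space of $P$. Since Plücker vectors span $\Lambda^{2m+2}\Kbar^n$, the polynomials $g_P$ span all of $V$.

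For the generic statement, it suffices to show that generic $g_{P_1},\ldots,g_{P_{N+1}}$ have common zero set exactly $X$ in $\Kbar^N$. One inclusion is automatic. For the other, I would use the standard fact that $N+1$ generic elements of a linear system of polynomials on $\Kbar^N$ cut out its base locus set-theoretically. The argument goes by induction on dimension. Let $Z_j$ be the zero set of $g_{P_1},\ldots,g_{P_j}$. Then each irreducible component of $Z_j$ not contained in $X$ is not contained in the zero set of a generic $g_P$, because the $g_P$ span $V$ and $V$ has no common zero off $X$. Hence intersecting with $g_{P_{j+1}}$ drops the dimension of every such component by at least one. After $N+1$ steps, all components outside $X$ have dimension $<0$, i.e.\ they are gone.

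The probabilistic bound is the main obstacle, because ``generic'' must be turned into a polynomial nonvanishing condition of controlled degree, to which \Cref{lem:Schwartz-Zippel} is then applied. I would make the inductive step quantitative via Bézout. The number of components of $Z_j$ not in $X$ is at most $(m+1)^j$, since each $g_P$ has degree $\le m+1$ in $y$. For each such component $C$, fix a point $y_C\in C\setminus X$. The condition that $g_{P_{j+1}}$ does not vanish at $y_C$ is the nonvanishing of $P\mapsto \operatorname{pf}(P^TM(y_C)P)$. This is a polynomial in $P$ of degree $2m+2$, and it is nonzero because $M(y_C)$ has rank $>2m$. By Schwartz--Zippel, failure at step $j+1$ has probability at most $(m+1)^j(2m+2)/|S|$. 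Summing over $j=0,\ldots,N$ gives at most $2(m+1)\sum_j (m+1)^j/|S|$, which should be massaged into $\le 4(m+1)^{N+1}/|S|$ (for $m\ge1$ the geometric sum is at most $2(m+1)^N$). One subtlety is that the points $y_C$ depend on the earlier $P_i$. This is handled by conditioning step by step: $P_{j+1}$ is independent of $P_1,\ldots,P_j$, so the conditional failure bound holds.
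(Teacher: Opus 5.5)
Your proof is correct, and its quantitative half is essentially the paper's. In both, Bézout bounds the number of components of the common zero set of $j$ Pfaffians by $(m+1)^j$. Schwartz--Zippel is then applied, at one point per component, to the degree-$(2m+2)$ polynomial $P\mapsto\operatorname{pf}(P^TM_CP)$, and a union bound finishes.

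The qualitative half differs. The paper uses no induction there but an incidence-variety count. It bounds $\dim Y\le N+(N+1)(n(2m+2)-1)$ for $Y=\{(M,P_1,\ldots,P_{N+1}) : M\in\mathcal{L}\setminus X,\ \operatorname{pf}(P_i^TMP_i)=0 \text{ for all } i\}$. This is strictly less than $\dim(\Kbar^{n\times(2m+2)})^{N+1}$, so the projection to the $P$-tuples is not dominant. That gives a dense open set of good tuples in one stroke.

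Your stepwise statement, that a generic $P_{j+1}$ works given $P_1,\ldots,P_j$, needs a routine extra step to become a statement about a dense open set of tuples. For example, use Chevalley constructibility of the bad set together with your own Schwartz--Zippel bound as $|S|\to\infty$. In exchange, your single dimension-dropping induction serves both parts uniformly.

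Your bookkeeping is also cleaner than the paper's in two places. First, you track only components of $Z_j$ not contained in $X$ and choose $y_C\in C\setminus X$. The paper instead asserts that $V(f_{P_1},\ldots,f_{P_j})$ is equidimensional of codimension $j$, which fails as soon as $X$ has a component of dimension $>N-j$. Second, your union bound over $j=0,\ldots,N$ really yields $4(m+1)^{N+1}/|S|$ for $m\ge 1$. The paper's sum over $j=1,\ldots,N+1$ is shifted by one index and in general exceeds that bound.

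Finally, the Cauchy--Binet/Pl\"ucker spanning step is more than you need. \Cref{lem:principal-minor-theorem}(e) already says that $P\mapsto\operatorname{pf}(P^TM(y)P)$ is a nonzero polynomial for every $y\notin X$, and that is all either argument uses.
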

\begin{proof}
	Let us view $ q_M := \operatorname{pf}(P^{T} M P) $ as a polynomial in $ P $ with parameter $ M $. 
	For any fixed $ M\in \mathcal{L}\setminus X $, by \Cref{lem:principal-minor-theorem} there exists $ P \in \K^{n\times (2m+2)} $ such that $  q_M(P) \ne 0 $. Hence, $ V(q_M) $ is a proper hypersurface of $ \K^{n\times (2m+2)} $ for each $ M $. Now, consider the variety 
	\begin{align*}
		Y = \{(M, P_1,\ldots,P_{N+1}) \in (\mathcal{L}\setminus X)\times (\K&^{n\times (2m+2)})^{N+1} \\
					&\mid q_M(P_1) = \ldots = q_M(P_{N+1}) = 0  \}	
	\end{align*}
	The fiber of a point $ M\in \mathcal{L}\setminus X $ has dimension at most $ (N+1)(n(2m+2)-1) $, since each $ P_i $ is constrained to a hypersurface. Hence, $ Y $ has dimension at most $ N + (N+1)(n(2m+2)-1) $. Therefore, the projection from $ Y $ to $ (\K^{n\times (2m+2)})^{N+1} $ cannot have dense image, since the target has strictly larger dimension $ (N+1)n(2m+2) $. Hence, for generic $ P_1,\ldots,P_{N+1} $, there is no matrix $ M $ of rank greater than $ 2m $ such that $ q_M(P_1) = \ldots = q_M(P_{N+1}) = 0 $. This concludes the qualitative part of the proof. 
	
	For the probabilistic statement, we loosely follow a technique outlined in \cite{lakshman1991complexity}. Write $ f_P := \operatorname{pf}(P^{T} M P) $ as a polynomial in $ M $ with coefficients in $ P $. Let $ S\subseteq \K $ be a finite set and assume that $ P_1,\ldots,P_{N+1} $ are sampled uniformly at random from $ S^{n\times (2m+2)} $. We claim that the probability of the event ``$ V(f_{P_1},\ldots,f_{P_{N+1}}) \cap \mathcal{L} \ne X$'' is at most $ \dfrac{4(m+1)^{N+1}}{|S|} $. 
	To this end, we prove by induction that for each $ j\in \{1,\ldots,N\} $, the probability that $ V(f_{P_1},\ldots,f_{P_{j}}) $ is equidimensional of codimension $ j $ is at least $ 1-2(m+1)^j $. Indeed, assume this is true for a fixed $ j $. Then, we must show that for most choices of $ P_{j+1} $, intersecting $ V(f_{P_1},\ldots,f_{P_{j}}) $  with $ V(f_{P_{j+1}}) $ makes the dimension drop in each component. By Bézout, $ V(f_{P_1},\ldots,f_{P_{j}}) $ has at most $ (m+1)^j $ irreducible components, since it is defined by $ j $ polynomials of degree $ m+1 $. For each component $ C $, pick a matrix $ M_C $ in the component. By Schwartz-Zippel, we know that the probability that $ \operatorname{pf}(P^{T} M P) = q_{M_C} $ vanishes at the random $ P = P_{j+1} $ is at most $ 2(m+1)/|S| $, since $ q_{M_C} $ has degree $ 2(m+1) $. By a union bound over all components $ C $, the probability that any of the polynomials $ q_{M_C} $ vanishes at $ P_{j+1} $ is at most $ 2(m+1)^{j+1}/|S| $. In particular, for $ j = N $ we obtain a zero-dimensional set with failure probability at most $ \sum_{j = 1}^{N} 2(m+1)^{j+1}/|S| $. For $ j = N+1 $, we see with the same argument that $ f_{P_{N+1}} $ does not vanish at any of the points of the finite set $ (V(f_{P_1},\ldots,f_{P_{N}})\cap \mathcal{L})\setminus X $ with probability at least $ 1-2(m+1)^{N+1}/|S|$ over $ P_{N+1} $. In total, we see that $ V(f_{P_1},\ldots,f_{P_{N+1}}) \cap \mathcal{L} $ fails to cut out $ X $ with probability at most $ \sum_{j = 1}^{N+1} 2(m+1)^{j+1}/|S| \le 4(m+1)^{N+1}/|S| $. The last bound used the geometric series. 
\end{proof}

\begin{theorem}\label{thm:algorithmic-alternating-kruskal}
	Let
	\begin{align}\label{eq:algorithmic-alternating-kruskal}
		T=\sum_{i=1}^r a_{i1}\wedge a_{i2}\wedge a_{i3}\in \Lambda^3(\mathbb K^n)
	\end{align}
	be an alternating tensor, and let $U_i:=\langle a_{i1},a_{i2},a_{i3}\rangle$.
	Suppose that $U_1,\ldots,U_r$ has Kruskal rank $k$ and satisfies $2r\le 3k-2$. 
	Then, \eqref{eq:algorithmic-alternating-kruskal} is the unique skew rank decomposition of $T$ and it can be recovered by
	a randomized algorithm in time $ \log(1/\varepsilon)^2 \cdot n^{\mathcal{O}(r-k+1)}$ for any desired success probability $0<\varepsilon<1$. 
\end{theorem}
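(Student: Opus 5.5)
Uniqueness is already \Cref{thm:alternating_kruskal}, so the task is algorithmic: recover the subspaces $U_i$ and then the coefficients. We follow the proof of \Cref{thm:alternating_kruskal} with $m := r-k+1$ and $h := r-m = k-1$. This choice is admissible because $m \le 2k-r-1$ is equivalent to $2r \le 3k-2$. By \eqref{eq:alternating_XT}, the variety $X_{2m}(T)$ is then the arrangement of the $\binom{r}{k-1}$ distinct subspaces $L_I=\langle U_i : i\in I\rangle^\perp$, each of dimension $t=n-3(k-1)$.

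The algorithm has four steps.
\begin{enumerate}[(1)]
\item Sample $E,F$ and a basis $f_1,\dots,f_t$ of $F$ as in \Cref{lem:genericity-implied-by-schwartz-zippel}.
\item For each of the $2t-1$ directions $f\in\{f_1,\dots,f_t,f_1+f_2,\dots,f_1+f_t\}$, compute the witness set $W_f = X_{2m}(T)\cap(f+E)$. This is a zero-dimensional intersection of the affine family $\mathcal L_f=\{T_{f+e}: e\in E\}$, of dimension $N=n-t=3(k-1)$, with the rank-$\le 2m$ locus. By \Cref{lem:random-subpfaffians} it is cut out by $N+1$ random Pfaffians $\operatorname{pf}(P_j^T T_{f+e} P_j)$ of degree $m+1$ in $e$.
\item Apply \Cref{lem:parallel-slice-component-recovery} to obtain bases of all $L_I$, hence of their annihilators $U_I=\langle U_i: i\in I\rangle$ for $|I|=k-1$.
\item Run the reduction $\Phi_{k-1},\Phi_{k-2},\dots$ from the proof of \Cref{thm:combinatorics_general}: at each stage, form the intersections of $r-t'+1$ members, keep those of dimension $3(t'-1)$, and end with $S_1=\{U_1,\dots,U_r\}$. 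Then solve the linear system $T=\sum_i \alpha_i\,\omega_i$, where $\omega_i$ is any fixed nonzero element of $\Lambda^3 U_i$. Its solution is unique by the linear independence shown at the end of the proof of \Cref{thm:alternating_kruskal}.
\end{enumerate}

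Step (4) is costly if done naively. It is cheaper to run \Cref{lem:combinatorics_general} only for candidate index sets $J$, i.e.\ to test pairwise intersections of members, since by that lemma any two members $U_{J\cup\{i\}},U_{J\cup\{j\}}$ intersect exactly in $U_J$.

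The main obstacle is step (2): solving a zero-dimensional polynomial system in $N=3(k-1)$ variables within time $n^{\mathcal O(r-k+1)}$. Here $N$ can be as large as about $n$, while only the degree $m+1=r-k+2$ is small. Generic elimination would cost $(m+1)^{N}$, which is far too much. We would instead exploit that the solution set is known to consist of exactly $|A|=\binom{r}{k-1}=\binom{r}{m-1}\le n^{\mathcal O(m)}$ points, each lying on a linear component. We would linearize: treat all monomials of degree $\le m+1$ in $e$ as unknowns, which gives $\binom{N+m+1}{m+1}=n^{\mathcal O(m)}$ of them. Taking enough random Pfaffians, for instance sampling $P$ over all of $\Kbar^{n\times(2m+2)}$ rather than only $N+1$ times, a Macaulay-matrix / eigenvalue computation in degree $\mathcal O(m)$ recovers the finitely many points via the univariate root oracle. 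If the linear algebra in degree $m+1$ does not suffice, we would raise the Macaulay degree by a constant factor. The delicate part is justifying this regularity degree bound for our subspace-arrangement ideal, which we expect to follow because the arrangement of codimension $3(k-1)$ subspaces has a Castelnuovo--Mumford regularity bounded by the number of components in general position, in the spirit of \cite{Blomenhofer_Lovitz_2025}.

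For the failure probability, take $S$ with $|S|$ polynomial in $n^{m}$, so that the bounds of \Cref{lem:genericity-implied-by-schwartz-zippel} and \Cref{lem:random-subpfaffians} give constant success probability. The output can be verified by checking $T=\sum \alpha_i\omega_i$. Repeating $\mathcal O(\log(1/\varepsilon))$ times, together with the inner random sampling, yields the stated $\log(1/\varepsilon)^2\cdot n^{\mathcal O(r-k+1)}$ running time.
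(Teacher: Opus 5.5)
Your overall pipeline is the right one: witness sets from random Pfaffians, then the slice-matching lemma, then the $\Phi$-chain of \Cref{thm:combinatorics_general}, then a final linear solve. Uniqueness is indeed just \Cref{thm:alternating_kruskal}.

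\textbf{The gap.} You chose $m=r-k+1$, hence $h=k-1$. This is the wrong end of the admissible range $\{r-k+1,\dots,2k-r-1\}$, and it makes two steps exponential.

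First, in step (2) each $W_f$ is a zero-dimensional system in $N=3(k-1)$ unknowns. When $r-k$ is constant and $k\approx n/3$, this is $\Theta(n)$ unknowns. The available $D^{\mathcal O(N)}$ bound for zero-dimensional solving then gives only $(m+1)^{\mathcal O(k)}$, which is exponential in $n$. Your Macaulay-matrix replacement needs a degree-of-regularity bound of order $m$, which you do not prove. The heuristic you invoke, regularity bounded by the number of components $\binom{r}{k-1}$, would allow degrees so large that the Macaulay matrix has exponentially many columns in $\Theta(n)$ unknowns. For the same reason, the failure bound $4(m+1)^{N+1}/|S'|$ of \Cref{lem:random-subpfaffians} forces $|S'|$ to be exponential in $k$, not polynomial in $n^m$.

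Second, in step (4) the chain $\Phi_{k-1},\Phi_{k-2},\dots$ must pass through $S_{\lfloor r/2\rfloor}$, because $k-1\ge (2r-1)/3\ge r/2$ for $r\ge 2$. That family has $\binom{r}{\lfloor r/2\rfloor}$ members, exponential in $r$. Testing only pairwise intersections does not help, because the families themselves are that large.

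\textbf{The missing idea.} Take the other extreme, $m=2k-r-1$, which is admissible precisely because $2r\le 3k-2$. Then:
\begin{itemize}
\item $h=r-m=2r-2k+1=\mathcal O(r-k+1)$, so the components of $X_{2m}(T)$ have codimension $3h$ and $E$ has dimension only $3h$.
\item Each $W_f$ is cut out by $3h+1$ random Pfaffians of degree $m+1\le n$ in only $3h$ unknowns. A $D^{\mathcal O(N)}$ zero-dimensional solver \cite{lakshman1991complexity} therefore runs in $(m+1)^{\mathcal O(h)}=n^{\mathcal O(r-k+1)}$, with no regularity analysis needed.
\item The $\Phi$-chain starts at $S_h$. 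Every family $S_i$ with $i\le h$ has size $\binom{r}{i}\le r^h=n^{\mathcal O(r-k+1)}$, and so does the number $\binom{r}{h}$ of components fed into the matching lemma.
\end{itemize}
For the $D^{\mathcal O(N)}$ bound, what must be small is the number of unknowns $3h$, not the degree $m+1$. Your choice makes the degree small and the number of unknowns large, which is the opposite trade-off.
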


\begin{proof}
	As in the proof of Theorem~\ref{thm:alternating_kruskal}, we may choose $m \in \{r-k+1,\ldots,2k-r-1\} $ and write $ h = r-m $, $ t = n-3h $. It turns out our runtime claim is achieved taking $ m = 2k-r-1 $. 
	Then the irreducible components of $X_{2m}(T)$ are the linear spaces $ U_I:=\langle U_i:i\in I\rangle^\perp $, where $ I $ ranges over all $ h $-element subsets of $ [r] $. 
	
	Pick a sufficiently large, finite set $ S\subseteq \Kbar $. Targeting a success probability of $ 1-\varepsilon/3 $, we take $ |S| = \dfrac{12n^2\binom{r}{h}^3}{\varepsilon} $. 
	Let us sample $ (e_1,\ldots,e_{3h},f_1,\ldots,f_t) \in S^{n\times n} $. Set $ E = \langle e_1,\ldots,e_{3h} \rangle $ and $ F = \langle f_1,\ldots,f_t \rangle $. Note that $ \binom{r}{h} \le r^h \le n^h = n^{\mathcal{O}(r-k)} $, using that $ h = 2r-2k+1 $.
	Sampling uniformly from $ S $ takes time $ \mathcal{O}(\log |S|) $. Hence, sampling from $ S^{n\times n} $ is feasible in time $ n^2 \mathcal{O}(\log (\frac{1}{\varepsilon}) + (1+n)\log n) $. For the last term, we used that $ \log \binom{r}{h} \le \log (n^h) \le h \log n \le n \log n$. In the following, we will do the runtime analysis for fixed $\varepsilon$. The dependency in $\varepsilon$ will be discussed later. 
	
	Next, we compute the intersections of $ X_{2m}(T) $ with the spaces of the form $ f+E $. For $ f\in F $, we write $ W_f $ for the intersection of $ X $ with $ f+E $. By \Cref{lem:principal-minor-theorem}, it holds that 
	\begin{align*}
		X_{2m}(T) \cap (f+E) = \{g\in f+E \mid \forall I\subseteq [n], |I| = 2m+2\colon \operatorname{pf}_{I}(T_g) = 0\}. 
	\end{align*}
	These intersections $ W_f $ can therefore be computed by solving the polynomial system described by the sub-Pfaffians, which are equations of degree $ m+1 $ in $ 3h $ variables on $ f+E $. With probability at least $ 1-\varepsilon/3 $ over the choice of $ E $ and $ f_1,\ldots,f_t $, the conditions of \Cref{lem:parallel-slice-component-recovery}, as stated in \Cref{lem:genericity-implied-by-schwartz-zippel}, are all satisfied. In particular,  each of the spaces $ f + E $ intersects $ X_{2m}(T) $ in a finite set, since Condition (b) of \Cref{lem:genericity-implied-by-schwartz-zippel} guarantees $ E\cap U_I = \{0\} $. Hence, all $ W_f $ are finite. 
	
	However, there are $ \binom{n}{2m+2} $ sub-Pfaffians, which is too much. Therefore, we are to solve a smaller subsystem with $ 3h $ variables and $ 3h + 1 $ equations. To this end, we sample random matrices $ P_1,\ldots,P_{3h+1} \in (S')^{n\times (2m+2)}$, where $ S'\subseteq\K $ is a finite set of appropriate size. Then, we solve the system
	\begin{align*}
		\operatorname{pf}(P_1^{T} T_{g} P_1) = \ldots = \operatorname{pf}(P_{3h+1}^{T} T_{g} P_{3h+1}) = 0, \qquad g\in f + E. 
	\end{align*} 
	By \Cref{lem:random-subpfaffians}, this new system has a finite solution set which equals $ W_f $ with probability at least $ 1-\frac{4(m+1)^{3h+1}}{|S'|} $. Targeting a failure probability of at most $ \varepsilon/3 $, we choose $ |S'| = \frac{12(2t-1)(m+1)^{3h+1}}{\varepsilon}  $. 
		
	By~\cite{lakshman1991complexity}, using Gröbner basis methods one can solve a zero-dimensional system of polynomials of degree $D$ in $N$ variables in time $D^{\mathcal{O}(N)}$ with probability at least $1-2^{-D^N}$. Specializing to our setting, we can solve this system in time $(m+1)^{\mathcal{O}(3h)}=n^{\mathcal{O}(h)} = n^{O(r-k+1)}$ with probability at least $1-2^{-(m+1)^{3h}}-\frac{1}{3}(1+\frac{1}{2t-1})\varepsilon$.  The additional $ \varepsilon $-term in the probability stems from a union bound with the estimates from before, as we need to bet that both solution sets, $ W_f $ and the zero set of the Pfaffians, are zero-dimensional. More precisely, we bet that the conditions of both \Cref{lem:parallel-slice-component-recovery}, as described in \Cref{lem:genericity-implied-by-schwartz-zippel}, and the conditions of \Cref{lem:random-subpfaffians} are fulfilled. This is the analysis for a single $ W_f $. However, note that the conditions of \Cref{lem:parallel-slice-component-recovery} guarantee that $ W_f $ is finite for all $ f $, whereas for the conditions of \Cref{lem:random-subpfaffians} we will need a union bound. 
	
	We can compute all the finite sets $ W_{f_1},\ldots,W_{f_t},W_{f_1+f_2},\ldots,W_{f_1+f_t} $ in time $ (2t-1)(m+1)^{\mathcal{O}(3h)} = n^{\mathcal{O}(h)} = n^{\mathcal{O}(r-k+1)} $. The probability that any of the zero-dimensional solves fails is bounded by $ 	 \frac{2t-1}{2^{(m+1)^{3h}}} + \frac{2}{3}\varepsilon $. The term $ \frac{2t-1}{2^{(m+1)^{3h}}} $ is already negligibly small for large $n$, but by repeating the zero-dimensional solve $\mathcal{O}(\log(1/\varepsilon))$ times, we can assume for all $n$ that it is bounded by $ \varepsilon/3 $, so that the success probability up to here is at least $ 1-\varepsilon $. Since sampling from the sets $S$ and $S'$ affects the runtime by a factor of $\mathcal{O}(\log(1/\varepsilon))$ and the zero-dimensional solve needs to be repeated potentially $\mathcal{O}(\log(1/\varepsilon))$ times, we obtain a $\varepsilon$-dependency of  $\mathcal{O}(\log(1/\varepsilon)^2)$ up to this point. As the following steps will not depend on $\varepsilon$, for brevity we omit the explicit $\varepsilon$-dependency for the remainder of the proof.
		
	By \Cref{lem:genericity-implied-by-schwartz-zippel}, the conditions of \Cref{lem:parallel-slice-component-recovery} are satisfied, and we successfully computed the spaces $ W_f $ from above with probability at least $ 1-\varepsilon $. Hence, with probability at least $ 1-\varepsilon $, we recover bases of all the subspaces $ U_I $ via the matching procedure outlined in \Cref{lem:parallel-slice-component-recovery}. 
	 
	Since the number of components is $\binom{r}{h}$, the matching step costs at most $ \mathcal{O}\left(n^2 \binom{r}{h}^3\right)=n^{\mathcal{O}(r-k+1)}$. Taking orthogonal complements gives the spaces $ \{\langle U_i:i\in I\rangle:|I|=h\} $ and has cost $n^{\mathcal{O}(r-k+1)}$ ($ \mathcal{O}(n^3) $ per vector space, but we need to it for all $ \binom{r}{m} $ spaces). Since $h\le k-1$, we can form the sets $ S_{h-1},S_{h-2},\ldots,S_1$ specified in the proof of Lemma~\ref{thm:combinatorics_general}. 
	We now estimate the cost of forming these sets. Note that $	|S_i|=\binom{r}{i}\le r^i $. 
	
	To recover $S_{i-1}$ from $S_i$, we first compute $ V\cap V' $ for all pairs $V,V'\in S_i$, and we retain $V\cap V'$ only if $\dim V\cap V'=3(i-1)$. Every true element of $S_{i-1}$ arises this way, since for $|I|=i-1$ and distinct $a,b\notin I$,
	\[
		\langle U_j:j\in I\cup\{a\}\rangle \cap	\langle U_j:j\in I\cup\{b\}\rangle = \langle U_j:j\in I\rangle.
	\]
	After sorting out duplicates of the resulting candidates, we continue to filter them: A space $V\cap V'$ is kept if and only if it is contained in $r-i+1$ members of $S_i$. By Lemma~\ref{lem:combinatorics_general}, the surviving
	subspaces are exactly the elements of $S_{i-1}$.
	To get all $ V\cap V' $ we need to compute $\mathcal{O}(|S_i|^2)$ subspace intersections, at a cost of $ \mathcal{O}(n^3) $ each for doing Gaussian elimination. Counting which members of $ S_i $ contain a pair $ V\cap V' $ takes $ |S_i| $ Gaussian eliminations, so total time at most $ \mathcal{O}(n^3|S_i|^3) $ for all pairs $ V\cap V' $. Discarding duplicates takes time at most $ \mathcal{O}(n^3|S_i|^2) $. Hence, the $ i $-th step takes time at most  $\mathcal{O}(n^3|S_i|^3) = n^{\mathcal{O}(r-k+1)} $. For the last estimate, we used that $|S_i|^3 \le r^{3i} \le n^{3h} = n^{6(r-k)+3}  $. Summing over $i=h,h-1,\ldots,2$, we stay in time $ n^{\mathcal{O}(r-k+1)} $. 

	At this point, we have recovered a basis $u_{i1},u_{i2},u_{i3}$ for each subspace $U_i$ with cost $n^{\mathcal{O}(h)}$. These bases determine the skew-rank decomposition up to scalars. In the proof of Theorem~\ref{thm:alternating_kruskal}, we saw that the 3-tensors $ (u_{i1}\wedge u_{i2}\wedge u_{i3})_{i=1,\ldots,r} $ are linearly independent. Hence the coefficients $\lambda_i$ in
	\[
		T=\sum_{i=1}^r \lambda_i\tau_i
	\]
	are obtained by solving a linear system, which takes time $\mathcal{O}(r^2 n^3)$. Combining the above steps yields a randomized algorithm with running time $ n^{\mathcal{O}(h)} = n^{\mathcal{O}(r-k+1)}$ and success probability $ 1-\varepsilon $. This concludes the proof. 
\end{proof}

\begin{algorithm}[t]
	\caption{\textsc{Alternating Kruskal Decomposition}}
	\label{alg:alternating-kruskal}
	\small
	\algrenewcommand\algorithmicensure{\textbf{Output:}}
	\begin{algorithmic}[1]
		\Require An alternating tensor $T\in\Lambda^3(\K^n)$, integers $r,k$ with
		$2r\le 3k-2$, and a failure tolerance $0<\varepsilon<1$.
		The input is promised to admit
		$T=\sum_{i=1}^r a_{i1}\wedge a_{i2}\wedge a_{i3}$, where the associated
		$3$-spaces $U_i=\langle a_{i1},a_{i2},a_{i3}\rangle$ have Kruskal rank
		at least $k$.
		\Ensure The unique minimum skew-rank decomposition of $T$.
		
		\State Set $m\gets 2k-r-1$, $h\gets r-m=2r-2k+1$, and
		$t\gets n-3h$.
		\State Over $\Kbar$, construct the spaces
		$E\gets\langle e_1,\ldots,e_{3h}\rangle$ and
		$F\gets\langle f_1,\ldots,f_t\rangle$ from independently sampled
		random vectors $e_1,\ldots,e_{3h},f_1,\ldots,f_t$. 
		\State Independently choose random matrices
		$P_1,\ldots,P_{3h+1}\in\Kbar^{n\times(2m+2)}$.
		Choose the sampling ranges as in the proof, so that the total failure
		probability is at most $\varepsilon$.
		\State Set
		$\mathcal F\gets\{f_1,\ldots,f_t\}
		\cup\{f_1+f_i:2\le i\le t\}$.
		
		\ForAll{$f\in\mathcal F$}
			\State Solve the $3h+1$ Pfaffian equations
			$\operatorname{pf}(P_j^TT_gP_j)=0$ on $g\in f+E$ 
			\State to compute $W_f\gets X_{2m}(T)\cap(f+E)$.
		\EndFor
		
		\State For every $x\in W_{f_1}$ and $i=2,\ldots,t$, match $x$ with
		the unique $x_i\in W_{f_i}$ such that
		$x+x_i\in W_{f_1+f_i}$.
		\State Set $\mathcal L\gets	\{\langle x,x_2,\ldots,x_t\rangle:x\in W_{f_1}\}$. 
		\Comment{Each matched $ t $-tuple spans one component of $ X_{2m}(T) $. }
		
		\State Compute 	$\mathcal S_h\gets\{L^\perp:L\in\mathcal L\}$. \hspace*{-2em}\Comment{nb: $\mathcal{S}_h = \{\langle U_j:j\in I\rangle:I\subseteq[r],\ |I|=h\}$.}
		\For{$i=h,h-1,\ldots,2$}
			\State Calculate bases for the distinct pairwise intersections of members of
			$\mathcal S_i$.
			\State Keep those intersections which have dimension $3(t-1)$ and lie in at least 
			\State $r-i+1$ members of $\mathcal S_i$.
			\State Call the resulting family $\mathcal S_{i-1}$.
		\EndFor
		\State We computed bases for each element of 
		$\mathcal S_1=\{U_1,\ldots,U_r\}$.
		
		\For{$i=1,\ldots,r$}
			\State Choose a basis $(u_{i1},u_{i2},u_{i3})$ of $U_i$ and set
			$M_i \gets u_{i1}\wedge u_{i2}\wedge u_{i3}$.
		\EndFor
		\State Solve the linear system
		$T=\sum_{i=1}^r\lambda_i M_i$ to obtain unique $ \lambda_1,\ldots,\lambda_r\in \K $. 
		\State \Return $T=\sum_{i=1}^r u_{i1}\wedge u_{i2}\wedge(\lambda_i u_{i3})$.
		\State If any of the steps above cannot be executed, output ``fail''. \\
		\Comment{The algorithm can always be repeated after resampling the random choices. }
	\end{algorithmic}
\end{algorithm}

\section{An intrinsic definition of Kruskal tensors}\label{sec:algorithms-and-examples}
Initially, we defined a Kruskal tensor as a tensor which has a decomposition that satisfies Kruskal's condition. However, being a Kruskal tensor does not depend on the choice of decomposition, since there is only one minimum rank decomposition. It is therefore desirable to have an intrinsic definition of Kruskal tensors. An intrinsic definition should use an open condition on the tensor itself, rather than its decomposition. We will give an intrinsic definition using the contraction variety. 
As the first step, we are going to define the intrinsic k-ranks $ k_1(T), k_2(T) $ and $ k_3(T) $ of a tensor. 

\begin{defi}\label{def:intrinsic-Kruskal}
	Let $ T\in \K^{n_1\times n_2\times n_3} $ be a tensor of rank $ r $. 
	\begin{enumerate}[(a)]
		\item For $ k\in \{0,\ldots,\min\{n_i, r\}\} $, we say that $ T $ has the property $ K_{i,k} $, if the dimension of $ X_{r-k}^{(i)}(T) $ is at most $ n_i-k $.
		\item The \emph{intrinsic $ k_i $-rank} of $ T $, denoted $ k_i(T) $, is defined as the largest value $ k\in \{0,\ldots,\min\{n_i, r\} \} $ such that $ T $ has the property $ K_{i, k} $. 
	\end{enumerate}
\end{defi}

Since $ X_r^{(i)}(T) = \Kbar^{n_i} $, any tensor has the property $ K_{i, 0} $. Hence, the intrinsic $ k_i $-rank is a well-defined number in $ \{0,\ldots,\min\{n_i, r\}\} $. We will show in the following proposition that under Kruskal's inequality, the intrinsic k-ranks and the k-ranks of the minimum rank decomposition coincide. Without Kruskal's inequality, one obtains a lower bound. As an important caveat, we note that for tensors of high ranks much beyond the range of Kruskal's theorem, the lower bound given by the intrinsic Kruskal ranks will most often just be zero. E.g., if $ r\ge 2n_1 $ and $ T \in \K^{n_1\times n_2\times n_3} $ is a generic rank $ r $ tensor with $ n_1\ge n_2\ge n_3 $, then $ k_1(T) = 0 $, since $ r-k\ge n_1 \ge \min\{n_2, n_3\} $ for all $ k $ in the range of \Cref{def:intrinsic-Kruskal}(a). 
Furthermore, if a tensor $ T $ has rank $ r $ and satisfies the property $ K_{i, k} $, then it need not satisfy the property $ K_{i,k-1} $. An example will be shown in the appendix. 
Hence, it is not possible to obtain lower bounds for the intrinsic k-ranks by checking the property only for smaller values of $ k $. This is in contrast with the standard notion of k-rank.

\begin{prop}\label{prop:intrinsic-krank-recovers-krank}
	Assume that $ T \in \K^{n_1\times n_2\times n_3} $ is a tensor of rank $ r $. Let $ T = \sum_{i = 1}^{r} a_i\otimes b_i\otimes c_i $ be a tensor decomposition with $ k $-ranks $ k_a, k_b $ and $ k_c $. Then, $ k_a\ge k_{1}(T), k_b \ge k_2(T)$ and $k_c\ge k_3(T)$. Furthermore, if $ 2r\le k_1(T) + k_2(T) + k_3(T) - 1$, then equality holds between the k-ranks of the decomposition and the intrinsic k-ranks of $ T $. 
\end{prop}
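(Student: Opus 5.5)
We prove the two claims separately, working in the third mode (the other modes are symmetric).

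\emph{Inequality $k_c \ge k_3(T)$.} Set $k = k_3(T)$; the case $k=0$ is trivial, so assume $k \ge 1$ and suppose toward contradiction that $k_c < k$. Then some $k$ of the vectors $c_j$, indexed by $J$ with $|J| = k$, are linearly dependent. As in the proof of Theorem~\ref{thm:kruskal}, every $f \in \langle c_j : j\in J\rangle^\perp$ kills the $k$ summands indexed by $J$. Hence $T_f$ is a sum of at most $r-k$ rank-one matrices, and so
\[
\langle c_j:j\in J\rangle^\perp \subseteq X_{r-k}^{(3)}(T).
\]
Since $\dim\langle c_j : j\in J\rangle \le k-1$, the left-hand side has dimension at least $n_3-k+1$. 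This contradicts property $K_{3,k}$, which says $\dim X^{(3)}_{r-k}(T)\le n_3-k$. Therefore $k_c\ge k_3(T)$. The same argument gives $k_a \ge k_1(T)$ and $k_b \ge k_2(T)$.

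\emph{Equality under $2r \le k_1+k_2+k_3-1$.} By the first part, $2r \le k_a+k_b+k_c-1$ as well. It remains to show that $T$ has property $K_{3,k_c}$; this gives $k_3(T)\ge k_c$ by maximality, and hence equality. Note $k_c\le \min\{n_3,r\}$, so $k_c$ lies in the admissible range. Take $m = r-k_c$. As in the proof of Theorem~\ref{thm:Kruskal_border_rank_precise}, the weakened Kruskal inequality rewrites as $m \le k_a+k_b-(r+1)$, and $r-m = k_c$. Lemma~\ref{lem:contraction-variety-dimension} then gives
\[
\dim X^{(3)}_{r-k_c}(T) = n_3 - k_c,
\]
which is exactly property $K_{3,k_c}$. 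The first two modes follow by symmetry.

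The main subtlety is the first step. There we must use that the dimension bound in $K_{3,k}$ is attained by a subspace of dimension at least $n_3-k+1$ inside $X_{r-k}$. This is immediate once we note that contraction varieties live over $\Kbar$. The fact that $r$ is the rank, so that the decomposition has exactly $r$ terms, is used only to match the index $r-k$ in the definition.
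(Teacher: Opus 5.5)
Your proof is correct, but for the equality you take a somewhat different route from the paper.

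The first part is the same as the paper's argument. A dependent $k$-subset of the $c_j$ would place a linear subspace of dimension at least $n_3-k+1$ inside $X^{(3)}_{r-k}(T)$, contradicting $K_{3,k}$.

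For the equality, the paper argues by contradiction. If, say, $k_a>k_1(T)$, then $k_b\ge k_2(T)$, $k_c\ge k_3(T)$ and the intrinsic hypothesis together give the strict Kruskal condition $2r\le k_a+k_b+k_c-2$. The paper then reuses the contraction-variety computation from the proof of Theorem~\ref{thm:kruskal} (the analogue of \eqref{eq:Xm1}) to force $k_a=k_1(T)$. You instead transfer the weakened inequality directly to the decomposition's k-ranks. You then apply Lemma~\ref{lem:contraction-variety-dimension} with $m=r-k_c$, as in the proof of Theorem~\ref{thm:Kruskal_border_rank_precise}.

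Both arguments rest on the same fact: Proposition~\ref{prop:standard-kruskal} makes $X_{r-k_c}(T)$ an arrangement of codimension-$k_c$ subspaces. Your version is more direct. It also visibly uses only $2r\le k_a+k_b+k_c-1$ for the given decomposition, which is formally weaker than the intrinsic hypothesis in the statement. The paper's contradiction step, as written, needs the intrinsic inequality to reach the $-2$ condition.

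Your decomposition-level form is exactly what the NP-hardness corollary at the end of Section~\ref{sec:algorithms-and-examples} invokes. There, $k_a+k_b+k_c\ge 2r+1$ is known only for the constructed decomposition, not for the intrinsic k-ranks.
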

\begin{proof}
	Let $k=k_1(T)$. The obvious inclusion 
	\begin{align*}
		 X_{r-k}^{(1)}(T) \supseteq \bigcup_{i_1 < \ldots < i_{k}} \langle a_{i_1},\ldots,a_{i_{k}} \rangle^{\perp}
	\end{align*}
	shows that the dimension of each space $ \langle a_{i_1},\ldots,a_{i_{k}} \rangle^{\perp} $ is at most the dimension of $ X_{r-k}^{(1)}(T) $, which by assumption is $ n_1-k $. Therefore, $ a_{i_1},\ldots,a_{i_{k}} $ are linearly independent for all $i_1 < \ldots < i_{k}$. Hence, the Kruskal rank $ k_a $ is at least $ k $. We conclude that $ k_a\ge k_1(T) $. If $ 2r\le k_a+k_b+k_c-2 $, then our proof of Kruskal's uniqueness theorem shows equality between $ k_a $ and $ k_1(T) $. 
	
	Now, assume that $ 2r\le k_1(T) + k_2(T) + k_3(T) - 1$. If $ k_a > k_1(T) $, then we would have $ 2r\le k_a+k_b+k_c-2 $ and our proof of Kruskal's uniqueness theorem would still show equality between $ k_a $ and $ k_1(T) $, which contradicts the assumption $ k_a > k_1(T) $. Hence, $ k_a = k_1(T) $. The arguments for $ k_2(T) $ and $ k_3(T) $ are analogous. 
\end{proof}

\noindent
Finally, we state an intrinsic definition of Kruskal tensors. 
\begin{cor}[{Intrinsic definition of Kruskal tensors}]\label{cor:intrinsic-definition-kruskal}
	A tensor $ T \in \K^{n_1\times n_2\times n_3} $ of rank $ r $ is a Kruskal tensor if and only if $ 2r\le k_1(T) + k_2(T) + k_3(T) - 2 $. 
\end{cor}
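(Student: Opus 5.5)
The plan is to prove the two directions separately, using \Cref{prop:intrinsic-krank-recovers-krank} as the bridge between intrinsic k-ranks and the k-ranks of a minimum rank decomposition.

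For the direction ``$\Leftarrow$'', assume $2r\le k_1(T)+k_2(T)+k_3(T)-2$. Fix any rank decomposition $T=\sum_{i=1}^r a_i\otimes b_i\otimes c_i$; it exists since $T$ has rank $r$. The first part of \Cref{prop:intrinsic-krank-recovers-krank} gives $k_a\ge k_1(T)$, $k_b\ge k_2(T)$ and $k_c\ge k_3(T)$. Therefore $2r\le k_a+k_b+k_c-2$, so this decomposition satisfies Kruskal's condition and $T$ is a Kruskal tensor. This direction is immediate.

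For the direction ``$\Rightarrow$'', suppose $T$ is a Kruskal tensor. Then there is a decomposition of some length $r_0$ with $2r_0\le k_a+k_b+k_c-2$. By \Cref{thm:kruskal}, $r_0=\rk(T)=r$, so this is a rank decomposition of $T$, and it is the unique one. We then show $k_1(T)=k_a$, $k_2(T)=k_b$, $k_3(T)=k_c$, from which the claimed inequality is exactly Kruskal's condition.
\begin{itemize}
\item The inequality $k_1(T)\le k_a$ comes from \Cref{prop:intrinsic-krank-recovers-krank}.
\item For $k_1(T)\ge k_a$ it suffices to verify property $K_{1,k_a}$, i.e.\ $\dim X^{(1)}_{r-k_a}(T)\le n_1-k_a$. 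Note that $k_a\le\min\{n_1,r\}$, so $k_a$ is an admissible value.
\item Apply \Cref{lem:contraction-variety-dimension} in the first index with $m=r-k_a$. Its hypotheses become $m\le k_b+k_c-(r+1)$, which is equivalent to $2r+1\le k_a+k_b+k_c$ and is implied by Kruskal's condition, and $r-m=k_a\le k_a$, which holds trivially.
\item The lemma then gives $\dim X^{(1)}_{r-k_a}(T)=n_1-k_a$, so $T$ has property $K_{1,k_a}$ and $k_1(T)\ge k_a$.
\end{itemize}
The same argument in the other two indices gives $k_2(T)=k_b$ and $k_3(T)=k_c$. Hence $2r\le k_1(T)+k_2(T)+k_3(T)-2$.

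The only delicate point is bookkeeping. One has to make sure \Cref{lem:contraction-variety-dimension} is applied with the roles of $a,b,c$ permuted correctly; it is stated for the third index, but the argument is symmetric. The case $m=0$, i.e.\ $k_a=r$, is allowed, since the lemma only requires $m$ to be a non-negative integer. Alternatively, one could cite the second part of \Cref{prop:intrinsic-krank-recovers-krank} directly, but that part assumes the intrinsic inequality, so the direct check via the lemma is cleaner.
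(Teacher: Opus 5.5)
Your proof is correct and is essentially the paper's argument. The ``$\Leftarrow$'' direction is the first part of \Cref{prop:intrinsic-krank-recovers-krank}; for ``$\Rightarrow$'' the paper appeals to the proof of Kruskal's theorem, namely the description \eqref{eq:Xm1} of $X_{r-k_c}(T)$ as an arrangement of codimension-$k_c$ subspaces, which is exactly the property $K_{i,k}$ check you carry out via \Cref{lem:contraction-variety-dimension} (your use of Kruskal's theorem to identify the decomposition length with $r$ just makes that step explicit).
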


\begin{cor}\label{cor:Kruskal-tensors-open}
	The set of Kruskal tensors of rank $ r $ is a Zariski open subset of the set of tensors of rank $ r $. 
\end{cor}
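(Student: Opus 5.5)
The plan is to express ``Kruskal tensor of rank $r$'' as a finite Boolean combination of conditions on $T$ of the form $\dim X^{(i)}_{r-k}(T)\le n_i-k$. Then I would show that each such condition is Zariski open on the set $R_r$ of tensors of rank $r$. By \Cref{cor:intrinsic-definition-kruskal}, $T\in R_r$ is Kruskal if and only if $2r\le k_1(T)+k_2(T)+k_3(T)-2$. Since $k_i(T)\ge k$ holds exactly when $T$ has property $K_{i,k'}$ for some $k'\ge k$ (by definition of $k_i(T)$ as a maximum), I would write
\[
\{T\in R_r \text{ Kruskal}\}=\bigcup_{\substack{(k'_1,k'_2,k'_3)\\ 2r\le k'_1+k'_2+k'_3-2}} \ \bigcap_{i=1}^3 \{T\in R_r : T \text{ has } K_{i,k'_i}\}.
\]
This is a finite union of finite intersections, with all $k'_i\le\min\{n_i,r\}$. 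It therefore suffices to prove that each set $\{T\in R_r: \dim X^{(i)}_{r-k}(T)\le n_i-k\}$ is open in $R_r$.

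For this openness step I would argue as in the proofs of \Cref{thm:Kruskal_border_rank_precise} and \Cref{lem:flattening-bound-skew}. Consider the incidence variety
\[
Z=\{([S],[f])\in \P(\K^{n_1\times n_2\times n_3})\times\P(\Kbar^{n_i}) \mid \rk S^{(i)}_f\le r-k\}.
\]
It is closed, because it is cut out by the $(r-k+1)$-minors, which are bihomogeneous. The fiber over $[S]$ is $\P(X^{(i)}_{r-k}(S))$. By upper semicontinuity of fiber dimension for the proper projection $Z\to\P(\K^{n_1\times n_2\times n_3})$ (\cite[Exercise 3.22, Chapter II]{hartshorne2013algebraic}), the locus where the fiber has dimension $\ge n_i-k$ is closed. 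Its complement, the locus where $\dim X^{(i)}_{r-k}(S)\le n_i-k$, is therefore open in the whole tensor space, and in particular open in $R_r$ with the subspace topology. The degenerate case of an empty projective fiber (affine dimension $0$) is harmless, since the condition is then automatically satisfied. Intersecting these open sets with $R_r$ and taking the finite union and intersection above gives the result.

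The main subtlety I expect is that the intrinsic rank $k_i(T)$ is a maximum over a non-monotone family of properties. The paper itself notes that $K_{i,k}$ need not imply $K_{i,k-1}$. So I cannot write ``$k_i(T)\ge k$'' as a single open condition, and the union over all admissible triples $(k'_1,k'_2,k'_3)$ is exactly what handles this. A further point to check is that the definition of $K_{i,k}$ uses the rank $r$ of $T$ itself. This is fine because we restrict to the stratum $R_r$, where $r$ is constant, so the same $r$ is used for every tensor in the family.
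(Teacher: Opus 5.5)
Your proof is correct and follows essentially the same route as the paper. Both reduce, via \Cref{cor:intrinsic-definition-kruskal}, to a finite union of finite intersections of the conditions $\dim X^{(i)}_{r-k}(T)\le n_i-k$, and both show each condition is open by upper semicontinuity of fiber dimension (the paper groups the same union into the sets $\mathcal{U}_{p,q,\ell}$ with $p+q+\ell=2r+2$, while your incidence variety and properness argument just spells out the semicontinuity step the paper cites).
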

\begin{proof}
	Let us denote by $ \mathcal{R}_{=r} $ the constructible subset of $ \sigma_{r} $ of tensors whose rank is exactly $ r $. We consider Kruskal's intrinsic inequality, 
	\begin{align*}
		2r\le k_1(T) + k_2(T) + k_3(T)-2.
	\end{align*} 
	Define the sets $ \mathcal{U}_{p,q,\ell} := \{T \in \mathcal{R}_{=r} \mid k_1(T)\ge p, k_2(T)\ge q, k_3(T)\ge \ell  \}$. We claim that the sets $ \mathcal{U}_{p,q,\ell} $ are Zariski open subsets of $ \mathcal{R}_{=r} $. Indeed, by definition of the intrinsic k-ranks, a tensor lies in $ \mathcal{U}_{p, q, \ell} $ if and only if there exists some $ k\ge p $ such that the dimension of $ X_{r-k}^{(1)}(T) $ is at most $ n_1-k $ (plus analogous conditions for the other two indices). The fiber semicontinuity theorem (\cite[Exercise 3.22, Chapter II]{hartshorne2013algebraic}) yields that these conditions are Zariski open, and so are thus their unions and finite intersections. 
	The set of tensors in $ \mathcal{R}_{=r} $ satisfying Kruskal's inequality can be written as a finite union
	\begin{align*}
		\{T\in \mathcal{R}_{=r} \mid T \text{ satisfies Kruskal's inequality} \} = \bigcup_{\substack{(p, q, \ell)\\ 2r+2= p+q+\ell} } \mathcal{U}_{p, q, \ell} \cap \mathcal{R}_{=r}
	\end{align*}
	Hence, the set of tensors in $ \mathcal{R}_{=r} $ satisfying Kruskal's inequality is an open subset of $ \mathcal{R}_{=r} $.
\end{proof}

\begin{cor}
	Deciding whether the dimension of a contraction variety is at most $ d $ is NP-hard. 
\end{cor}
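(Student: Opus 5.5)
The plan is a polynomial-time reduction from computing the \emph{spark} (equivalently, the Kruskal rank) of a finite set of vectors. Deciding whether a given set of vectors $c_1,\ldots,c_r\in\Q^{n}$ has Kruskal rank at least $k$, i.e., whether every $k$-subset is linearly independent, is NP-hard. This is the spark problem shown NP-hard by Tillmann and Pfetsch; that result is not stated in this paper and would have to be imported. I would encode this question as a dimension question about a contraction variety, exactly in the spirit of \Cref{lem:contraction-variety-dimension} and \Cref{def:intrinsic-Kruskal}.

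Given $c_1,\ldots,c_r\in\Q^{n}$ and $k$, first check in polynomial time that no two of the $c_i$ are parallel, so $k_c\ge 2$; otherwise the instance is trivially decided. Then set $n_1=n_2=r$, $n_3=n$, and form
\begin{align*}
	T=\sum_{i=1}^r e_i\otimes e_i\otimes c_i .
\end{align*}
This tensor has polynomial size. Its $a$- and $b$-vectors are the standard basis, so $k_a=k_b=r$ and Kruskal's condition $2r\le 2r+k_c-2$ holds. Hence $T$ is a Kruskal tensor, and the hardness will already hold on Kruskal tensors. The contraction is
\begin{align*}
	T_f=\operatorname{diag}(\langle f,c_1\rangle,\ldots,\langle f,c_r\rangle),
\end{align*}
whose rank is the number of indices $i$ with $\langle f,c_i\rangle\neq 0$. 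Therefore, without even invoking \Cref{prop:standard-kruskal},
\begin{align*}
	X_{r-k}^{(3)}(T)=\bigcup_{I\subseteq[r],\,|I|=k}\langle c_i: i\in I\rangle^{\perp}.
\end{align*}
This is a finite union of subspaces, so its dimension is $n-\min_{|I|=k}\dim\langle c_i:i\in I\rangle$.

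It follows that $\dim X_{r-k}^{(3)}(T)\le n-k$ if and only if every $k$-subset of $\{c_i\}$ is linearly independent, that is, if and only if $k_c\ge k$. Taking $d=n-k$ completes the reduction: an oracle deciding ``$\dim X_m(T)\le d$'' decides the Kruskal-rank (spark) problem.

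The only delicate point is the hardness source. I would cite the NP-hardness of spark over $\Q$ and make sure the version used is ``all $k$-subsets independent?'' rather than an approximate variant. I would also note that the reduction is compatible with the arithmetic model of computation fixed in the introduction: it uses only rational entries and polynomially many of them.
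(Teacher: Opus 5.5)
Your reduction is correct, and it is a genuinely simpler route than the paper's.

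The paper proceeds differently. It places the spark instance in the first factor, takes $b_i=e_i$, and puts $c_i$ on a rational normal curve in $\Q^{r-d+1}$. With these choices $T$ always satisfies the weakened inequality $2r\le k_a+k_b+k_c-1$, and it is a Kruskal tensor exactly when $k_a>d$. The argument then goes through \Cref{prop:intrinsic-krank-recovers-krank} and \Cref{thm:Kruskal_border_rank_precise}. It also needs a promise version of the hardness result (decide $k_a=d$ under the promise $k_a\ge d$), which the paper extracts from the proof of Tillmann--Pfetsch.

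By taking $a_i=b_i=e_i$ instead, you make every mode-$3$ contraction diagonal. So you compute the contraction variety by hand, with no appeal to \Cref{prop:standard-kruskal}, intrinsic k-ranks, border rank, or a promise problem. After your preprocessing, your instances are moreover genuine Kruskal tensors.

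What the paper's more elaborate construction buys is the remark that follows the corollary. Its instances sit on the boundary of Kruskal's condition, chosen so that the hard contraction variety cannot be sidestepped by the contraction-variety uniqueness argument. This is how the paper argues that algorithms revealing contraction-variety dimensions cannot run in polynomial time. On your instances, by contrast, the other two modes have full k-rank and the $c_i$ can be read off directly as the fibres $(T_{ii\ell})_{\ell}$. So your instances show the dimension question is hard, but say nothing about Kruskal-type decomposition algorithms.

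One small correction concerns your hardness source. ``Every $k$-subset is independent'' is the complement of the NP-complete problem ``some $k$-subset is dependent'', hence coNP-complete. Your many-one reduction therefore shows that ``$\dim\le d$'' is coNP-hard, i.e.\ NP-hard under Turing reductions. This is the same sense in which the paper's own reduction proves the corollary, since it feeds into a test of the form $\codim X_m(T)\le d'$.
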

\begin{proof}
	It is well-known to be NP hard to decide, given $ a_1,\ldots,a_r \in \Q^n $ and $ d $ whether the k-rank of a given list of vector $ a_1,\ldots,a_r $ satisfies $ k_a\le d $. See \cite{Khachiyan_1995} and \cite{Tillmann_Pfetsch_2014}. From \cite[Proof of Theorem 1]{Tillmann_Pfetsch_2014}, it follows that it is still NP hard in the worst case to decide whether $ k_a=d $ if one is given the promise that $ k_a\ge d $. 
	Given an algorithm $ \mathcal{A} $ to decide whether $ \codim X_{m}(T) \le d' $ on input $ (T, m, d') $, we construct an algorithm $ \mathcal{B} $ to decide whether $ k_a = d $ on input $ (a, d) $ with promise $ k_a\ge d $ which arises from a polynomial reduction.
	Algorithm $ \mathcal{B} $ works as follows. Given $ a_1,\ldots,a_r $ and $ d $, take $ b_i = e_i \in \Q^r$ and choose $ c_1,\ldots,c_r \in \Q^{s}$ as distinct points on the rational normal curve, so that $ c_i = (1, \lambda_i,\ldots,\lambda_i^{s-1}) $ for distinct $ \lambda_i\in \Q $ and $ s = r-d+1 $. Compute the tensor $ T = \sum_{i = 1}^{r} a_i\otimes b_i \otimes c_i $. We have $ k_c = s, k_b = r $, and thus $ k_a+k_b+k_c \ge d + r + s = 2r+1$. Hence, Kruskal's inequality is satisfied if and only if $ k_a > d $. However, as Kruskal's inequality is violated by at most 1, \Cref{prop:intrinsic-krank-recovers-krank} still applies and thus the intrinsic k-ranks of $ T = \sum_{i = 1}^{r} a_i\otimes b_i \otimes c_i $ equal the respective k-ranks of $ a,b $ and $ c $. Also, \Cref{thm:Kruskal_border_rank_precise} shows that $ \rk (T) = r $. Hence, $ k_a > d $ if and only if $ \codim X_{r-d-1}(T) = d+1$. Thus, we call algorithm $ \mathcal{A} $ with input $ (T, r-d-1, d) $ and we let $ \mathcal{B} $ output ``yes'' if the output of $ \mathcal{A} $ is ``yes''. 
\end{proof}

\begin{remark}
	The example given in the proof above was chosen such that if $ k_a = d+1 $ then it is not possible to choose any other $ m $ to make the uniqueness proof from \Cref{thm:kruskal} work. 
	This indicates that any algorithm which implicitly reveals the dimensions of contraction varieties (like Domanov-Lathauwer \cite{domanov2014canonical} and \cite{BSS26} and our algorithm from \Cref{sec:algorithmic_alternating} do) cannot run in polynomial time unless P = NP. 	
\end{remark}

\begin{remark}
	A similar definition of intrinsic alternating Kruskal tensors can be given. This is not demonstrated, since it is completely analogous. 
\end{remark}

	\subsection*{Acknowledgments}
    The authors acknowledge helpful and inspiring discussions with Fulvio Gesmundo, Tim Seynnaeve, Daniele Taufer, Nick Vannieuwenhoven and Aravindan Vijayaraghavan. 
    
    The authors used GPT-5.6 Sol for editorial polishing, literature review, to generate the example at the end of Section~\ref{sec:standard-kruskal}, and to generate ideas for the counterexample in Theorem~\ref{thm:alternating_sharp}. The example in Appendix A was generated with the help of hand-written Julia code~\cite{Julia-2017}. The authors take full responsibility for the correctness, exposition, and attribution in the manuscript. 
    
    Benjamin Lovitz acknowledges support from NSERC Discovery Grant RGPIN-2026-05413. Alexander Taveira Blomenhofer is supported by the Quantum for Life center, funded by the Novo Nordisk Foundation (grant NNF20OC0059939 ‘Quantum for Life’). Furthermore, he also acknowledges financial support from the VILLUM FONDEN via the QMATH Centre of Excellence (Grant No. 10059). 

	\bibliography{bibML}
	\bibliographystyle{plain}
	
	\appendix

\section{Examples}
In this section, we examine a concrete symmetric tensor and we check whether it is a Kruskal tensor. We calculate its contraction varieties and its minimum rank decomposition. 
We will see that for $ r = 5 $, it satisfies the properties $ K_{i, 0}, K_{i, 3} $ and $ K_{i, 4} $ from \Cref{def:intrinsic-Kruskal} for $ i = 1,2,3 $, but that it does not satisfy $ K_{i, 1} $ and $ K_{i, 2} $. 

Consider the symmetric tensor $T \in \Q^{4\times 4\times 4}$ given by
\begin{align*}
	T = &e_1^3 + 3e_1^2e_2 + 9e_1^2e_3 + 3e_1e_2^2 + 6e_1e_2e_3 + 6e_1e_2e_4 + 3e_1e_3^2 + 6e_1e_4^2 + 2e_2^3\\ + &6e_2^2e_3 + 6e_2^2e_4 + 6e_2e_3^2 + 12e_2e_3e_4 + 6e_2e_4^2 + 4e_3^3 + 3e_3^2e_4 + 9e_3e_4^2  
\end{align*}
Here, $a_1a_2a_3$ stands for the symmetrization of the tensor $a_1 \otimes a_2 \otimes a_3$, defined by $ a_1a_2a_3 =  \frac{1}{3!} \sum_{\sigma\in \mathfrak{S}_3} a_{\sigma(1)}\otimes a_{\sigma(2)}\otimes a_{\sigma(3)} $. The slices of $T$ along the third index are given by  
\[
T_{e_1}=
\begin{pmatrix}
	1 & 1 & 3 & 0 \\
	1 & 1 & 1 & 1 \\
	3 & 1 & 1 & 0 \\
	0 & 1 & 0 & 2
\end{pmatrix},
\quad
T_{e_2}=
\begin{pmatrix}
	1 & 1 & 1 & 1 \\
	1 & 2 & 2 & 2 \\
	1 & 2 & 2 & 2 \\
	1 & 2 & 2 & 2
\end{pmatrix},
\]
\[
T_{e_3}=
\begin{pmatrix}
	3 & 1 & 1 & 0 \\
	1 & 2 & 2 & 2 \\
	1 & 2 & 4 & 1 \\
	0 & 2 & 1 & 3
\end{pmatrix},
\quad
T_{e_4}=
\begin{pmatrix}
	0 & 1 & 0 & 2 \\
	1 & 2 & 2 & 2 \\
	0 & 2 & 1 & 3 \\
	2 & 2 & 3 & 0
\end{pmatrix}.
\]
Given the representation of $T$ above, it is possible to compute the contraction varieties using computer algebra systems. We see that the rank of $T$ must be at least $4$, since its contraction $T_{e_3}$ has rank $4$. More precisely, the slices $T_{e_1},T_{e_2},T_{e_3},T_{e_4}$ have ranks $3,2,4,4$, respectively. Since $T$ is symmetric, there is only one (intrinsic) k-rank, $k_1(T)$. Note that the generic rank of symmetric $4\times 4\times 4$ tensors is $5$, and $5 = 1.5\cdot 4-1$ is the largest rank for which there exist $4\times 4\times 4$ Kruskal tensors. We therefore have to check whether $ T $ is a rank-$ r $ Kruskal tensor for $ r \in \{4, 5\} $. 

As mentioned in \cite{Blomenhofer_Lovitz_2025}, contraction varieties are easy to compute in constant codimension by intersecting with a plane of constant dimension. We first compute the contraction variety $X_3(T)$. The defining polynomial is the quartic 
\[
\begin{aligned}
	\det(T_f) = &-f_1^3 f_2 - f_1^3 f_3 + 3f_1^3 f_4 - f_1^2 f_2^2 - 2f_1^2 f_2 f_3 + 7f_1^2 f_2 f_4 \\
	&- f_1^2 f_3^2 + 7f_1^2 f_3 f_4 + 4f_1^2 f_4^2 + f_1 f_2^2 f_4 + f_1 f_2 f_3^2 - f_1 f_2 f_3 f_4 \\
	&- 5f_1 f_2 f_4^2 + f_1 f_3^3 - 5f_1 f_3^2 f_4 - 4f_1 f_3 f_4^2 - 6f_1 f_4^3 \\
	&+ f_2^2 f_3^2 - 3f_2^2 f_3 f_4 + f_2^2 f_4^2 + 2f_2 f_3^3 - 12f_2 f_3^2 f_4 + 4f_2 f_3 f_4^2 \\
	&+ 2f_2 f_4^3 + f_3^4 - 9f_3^3 f_4 - 4f_3^2 f_4^2 + 7f_3 f_4^3 + f_4^4.
\end{aligned}
\]
It is possible to compute that $\det(T_f)$ is an irreducible polynomial in $f$. In particular, this means that $T$ cannot be a Kruskal tensor of rank $4$ or less. Indeed, if $r = 4$, then $X_3(T)$ would be a union of $4$ hyperplanes by \Cref{lem:contraction-variety-dimension}. If $r < 4$, then $X_3(T) = \C^n$. Hence $ T $ can only be a Kruskal tensor for $r=5$. To check whether $T$ is a Kruskal tensor of rank $5$, we can compute $X_1(T)$. A computation reveals that this contraction variety is of dimension $ 0 = n-4$, and thus $k_1(T) = 4$ and Kruskal's inequality $2r + 2 \le 3k_1(T)$ holds for $ r = 5 $, if $ \rk(T) = 5 $. However, it remains to show that $ T $ has rank at most $ 5 $. To this end, we will reveal a decomposition by following the proofs of \Cref{thm:kruskal}, \Cref{thm:combinatorics_general} and \Cref{lem:combinatorics_general}, which reconstruct the decomposition from the irreducible components of $ X_{2}(T) $ (note that $ 3 = k_1(T)-1 = r-m $, where $ m = 2 $).

By intersecting with a plane, we calculate that $X_2(T)$ is an arrangement of $10 = \binom{5}{3}$ distinct lines, given by the projective points 
\begin{align*}
	&[0: 2: -1: -1],
	[0: 1: -1: 0],
	[0: 1: 0: -1],
	[1: -3: 1: 2],
	[0: 1: 0: 0],\\
	&[1: -4: 1: 2],
	[1: 1: -1: 0],
	[1: 0: -1: 0],
	[1: -1: 1: 0],
	[1: -2: 1: 0]    
\end{align*}
We have to find vectors $ a_1,\ldots,a_5 $ such that each line above can be written as $ \langle a_{i_1},a_{i_2}, a_{i_3} \rangle^{\perp} $ for some $ i_1,i_2,i_3\in [5] $. We consider the orthogonal complements of the lines, which are spaces of dimension $ 3 $, and we enumerate them by $ U_1 = \ell_1^{\perp},\ldots,U_{10} = \ell_{10}^{\perp} $ in the order above. Among the $ 120 = \binom{10}{3} $ triples of spaces $ \{U_i, U_j, U_k\} $, there are exactly 10 for which the intersection $ U_i\cap U_j\cap U_k $ has dimension $ 2 $. These are: 
\begin{align*}
	&(1,2,3), (1,4,7), (1,6,8), (2,7,9), (2,8,10),\\ &(3,4,9), (3,6,10), (4,5,6), (5,7,8), (5,9,10).
\end{align*}
We find that 
\[
\begin{array}{c|c}
	\text{Intersection} & \text{Basis } \langle b_1,b_2\rangle \\[2pt]
	\hline
	U_1\cap U_2\cap U_3
	& (1,0,0,0),\,(0,1,1,1) \\[2pt]
	\hline
	U_1\cap U_4\cap U_7
	& (1,1,2,0),\,(-1,1,0,2) \\[2pt]
	\hline
	U_1\cap U_6\cap U_8
	& (2,1,2,0),\,(0,1,0,2) \\[2pt]
	\hline
	U_2\cap U_7\cap U_9
	& (0,1,1,0),\,(0,0,0,1) \\[2pt]
	\hline
	U_2\cap U_8\cap U_{10}
	& (1,1,1,0),\,(0,0,0,1) \\[2pt]
	\hline
	U_3\cap U_4\cap U_9
	& (-1,0,1,0),\,(1,1,0,1) \\[2pt]
	\hline
	U_3\cap U_6\cap U_{10}
	& (-1,0,1,0),\,(2,1,0,1) \\[2pt]
	\hline
	U_4\cap U_5\cap U_6
	& (-1,0,1,0),\,(-2,0,0,1) \\[2pt]
	\hline
	U_5\cap U_7\cap U_8
	& (1,0,1,0),\,(0,0,0,1) \\[2pt]
	\hline
	U_5\cap U_9\cap U_{10}
	& (-1,0,1,0),\,(0,0,0,1) \\[2pt]
\end{array}
\]
Note that in the notation of \Cref{lem:combinatorics_general}, we have $ r = 5 $, $ \ell = 3 $ and $ r-\ell+1 = 3 $, which is why we have to consider intersections of triplets $ U_i\cap U_j\cap U_k $. \Cref{lem:combinatorics_general} guarantees that the 10 intersections above are of the form $ \langle a_{i_1}, a_{i_2} \rangle $ for some $ i_1 < i_2 $, if $ T $ is a Kruskal tensor. Applying \Cref{lem:combinatorics_general} now again with $ \ell = 2 $ and $ r-\ell+1 = 4 $, we see that there exist exactly $ 5 $ quadruplets of the 10 triplets above whose intersection has dimension one. Writing $ U_{ijk} $ as a shorthand for $ U_i\cap U_j\cap U_k $, we find \bigskip

\begin{tabular}{c|c|c}
	Intersection & Line & Point\\
	\hline
	$U_{3,4,9} \cap U_{3,6,10} \cap U_{4,5,6} \cap U_{5,9,10}$
	& $[1 : 0 : -1 : 0]$ & $ a_1 $\\
	\hline
	$U_{1,2,3} \cap U_{1,6,8} \cap U_{2,8,10} \cap U_{3,6,10}$
	& $[1 : 1 : 1 : 1]$ & $ a_2 $\\
	\hline
	$U_{1,2,3} \cap U_{1,4,7} \cap U_{2,7,9} \cap U_{3,4,9}$
	& $[0 : 1 : 1 : 1]$ & $ a_3 $\\
	\hline
	$U_{2,7,9} \cap U_{2,8,10} \cap U_{5,7,8} \cap U_{5,9,10}$
	& $[0 : 0 : 0 : -1]$ & $ a_4 $\\
	\hline
	$U_{1,4,7} \cap U_{1,6,8} \cap U_{5,7,8} \cap U_{4,5,6}$
	& $[1 : 0 : 1 : -1]$ & $ a_5 $\medskip \\
\end{tabular}

The five points $ a_1,\ldots,a_5 $ on the right hand side of the table correspond to the rank decomposition of $ T $. In general, this is only true up to scalars, but the missing scalars can be corrected by solving a linear system. In our case, it turns out that 
\begin{align*}
	T = \begin{pmatrix} -1\\ 0\\1\\0 \end{pmatrix}^{\otimes 3} 
	+ \begin{pmatrix} 1\\ 1\\1\\1 \end{pmatrix}^{\otimes 3} 
	+ \begin{pmatrix} 0\\ 1\\1\\1 \end{pmatrix}^{\otimes 3}
	+ \begin{pmatrix} 0\\ 0\\0\\-1 \end{pmatrix}^{\otimes 3} 
	+ \begin{pmatrix} 1\\ 0\\1\\-1 \end{pmatrix}^{\otimes 3}.
\end{align*}
We obtain the following decomposition of $ X_2(T) $, in terms of the tensor rank decomposition of $ T $.
\[
\begin{array}{c|c}
	\text{line} &
	\text{irreducible component of $ X_2(T) $} \\ \hline
	[0:2:-1:-1] &
	\langle a_2,a_3,a_5\rangle^\perp \\[2mm]
	[0:1:-1:0] &
	\langle a_2,a_3,a_4\rangle^\perp \\[2mm]
	[0:1:0:-1] &
	\langle a_1,a_2,a_3\rangle^\perp \\[2mm]
	[1:-3:1:2] &
	\langle a_1,a_3,a_5\rangle^\perp \\[2mm]
	[0:1:0:0] &
	\langle a_1,a_4,a_5\rangle^\perp \\[2mm]
	[1:-4:1:2] &
	\langle a_1,a_2,a_5\rangle^\perp \\[2mm]
	[1:1:-1:0] &
	\langle a_3,a_4,a_5\rangle^\perp \\[2mm]
	[1:0:-1:0] &
	\langle a_2,a_4,a_5\rangle^\perp \\[2mm]
	[1:-1:1:0] &
	\langle a_1,a_3,a_4\rangle^\perp \\[2mm]
	[1:-2:1:0] &
	\langle a_1,a_2,a_4\rangle^\perp
\end{array}
\]

The method above can be seen as a geometric re-interpretation of the algorithms in \cite{domanov2014canonical} and \cite{BSS26}.

\end{document}